\documentclass[12pt]{amsart}
\usepackage{amscd,amsmath,amssymb,enumerate,amsfonts,mathrsfs,txfonts}
\usepackage{comment}
\usepackage{hyperref}
\usepackage{xcolor}
\usepackage{tikz,pgfplots,pgf}
\usepackage{vmargin}
\usepackage[all]{xy}

\setcounter{MaxMatrixCols}{10}

\tikzset{node distance=3cm, auto}
\usetikzlibrary{calc} 
\usetikzlibrary{trees} 
\setpapersize{A4}
\setmargins{2.5cm}      
{1.5cm}                        
{16.5cm}                      
{23.42cm}                   
{10pt}                          
{1cm}                           
{0pt}                          
{2cm}

\newtheorem{theorem}{Theorem}[section]
\theoremstyle{plain}

\newtheorem{corollary}[theorem]{Corollary}

\newtheorem{definition}[theorem]{Definition}

\newtheorem{lemma}[theorem]{Lemma}

\newtheorem{proposition}[theorem]{Proposition}

\numberwithin{equation}{section}

\def\N{\mathbb{N}}
\def\R{\mathbb{R}}
\def\C{\mathbb{C}}

\def\G{\mathcal{G}}
\def\H{\mathcal{H}}
\def\I{\mathcal{I}}
\def\L{\mathcal{L}}
\def\D{\mathbb{D}}
\def\B{\mathcal{B}}
\def\P{\mathcal{P}}

\def\Aut{\mathrm{Aut}}
\def\A{\mathcal{A}}

\def\lin{\mathrm{lin}}

\def\T{\mathbb{T}}
\def\rang{\mathrm{rang}}

\makeatletter
\@namedef{subjclassname@2020}{\textup{2020} Mathematics Subject Classification}
\makeatother

\begin{document}

\title[$p$-Summing Bloch mappings on the complex unit disc]{$p$-Summing Bloch mappings on the complex unit disc}

\author[M. G. Cabrera-Padilla]{M. G. Cabrera-Padilla}
\address[]{Departamento de Matem\'{a}ticas, Universidad de Almer\'{i}a, Ctra. de Sacramento s/n, 04120 La Ca\~nada de San Urbano, Almer\'{i}a, Spain}
\email{m\_gador@hotmail.com}
\thanks{Research partially supported by Junta de Andaluc\'{\i}a grant FQM194. The first two authors were supported by grant PID2021-122126NB-C31 funded by MCIN/AEI/ 10.13039/501100011033 and by ``ERDF A way of making Europe''.}

\author[A. Jim{\'e}nez-Vargas]{A. Jim\'enez-Vargas}
\address[]{Departamento de Matem\'aticas, Universidad de Almer\'ia, Ctra. de Sacramento s/n, 04120 La Ca\~nada de San Urbano, Almer\'ia, Spain}
\email{ajimenez@ual.es}

\author[D. Ruiz-Casternado]{D. Ruiz-Casternado}
\address[]{Departamento de Matem{\'a}ticas, Universidad de Almer{\'i}a, Ctra. de Sacramento s/n, 04120 La Ca\~nada de San Urbano, Almer{\'i}a, Spain}
\email{drc446@ual.es}

\subjclass{Primary 30H30; Secondary 46E15, 46E40, 47B38}

\keywords{Vector-valued Bloch mapping, compact Bloch mapping, Banach-valued Bloch molecule, Bloch-free Banach space.}


\begin{abstract}
The notion of $p$-summing Bloch mapping from the complex unit open disc $\D$ into a complex Banach space $X$ is introduced for any $1\leq p\leq\infty$. It is shown that the linear space of such mappings,  equipped with a natural seminorm $\pi^{\B}_p$, is M\"obius-invariant. Moreover, its subspace consisting of all those mappings which preserve the zero is an injective Banach ideal of normalized Bloch mappings. Bloch versions of the Pietsch's domination/factorization Theorem and the Maurey's extrapolation Theorem are presented. We also introduce the spaces of $X$-valued Bloch molecules on $\D$ and identify the spaces of normalized $p$-summing Bloch mappings from $\D$ into $X^*$ under the norm $\pi^{\B}_p$ with the duals of such spaces of molecules under the Bloch version of the $p^*$-Chevet--Saphar tensor norms $d_{p^*}$.
\end{abstract}
\maketitle


\section*{Introduction}

The known concept of absolutely $p$-summing operator between Banach spaces, introduced by Grothendieck \cite{Gro-55} for $p=1$ and by Pietsch \cite{Pie-67} for any $p>0$, can be adapted to address the property of summability in the setting of Bloch mappings from the complex unit open disc $\D$ into a complex Banach space $X$ as follows.

The study of summability has been addressed for different classes of mappings by some authors. For example, for multilinear operators by Achour and Mezrag \cite{AchMez-07} and Dimant \cite{Dim-03}, for Lipschitz mappings by Farmer and Johnson \cite{FarJoh-09} and Saadi \cite{Saa-15}, and for holomorphic mappings by Matos \cite{Mat-96} and Pellegrino \cite{Pel-03}, among other settings. See also the survey by Pellegrino, Rueda and S\'anchez-P\'erez \cite{PelRuSan-16} for the summability on multilinear operators and homogeneous polynomials.

If $\H(\D,X)$ denotes the space of all holomorphic mappings from $\D$ into $X$, let us recall that a mapping $f\in\H(\D,X)$ is called \emph{Bloch} if there exists a constant $c\geq 0$ such that $(1-|z|^2)\left\|f'(z)\right\|\leq c$ for all $z\in\D$. 

The \textit{Bloch space} $\B(\D,X)$ is the linear space of all those mappings $f\in\H(\D,X)$ such that   
$$
p_{\B}(f):=\sup\left\{(1-|z|^2)\left\|f'(z)\right\|\colon z\in\D\right\}<\infty ,
$$ 
equipped with the \textit{Bloch seminorm} $p_{\B}$. The \textit{normalized Bloch space} $\widehat{\B}(\D,X)$ is the Banach space of all Bloch mappings from $\D$ into $X$ such that $f(0)=0$, equipped with the \textit{Bloch norm} $p_{\B}$. In particular, we will write $\widehat{\B}(\D)$ instead of $\widehat{\B}(\D,\C)$. We refer the reader to \cite{AndCluPom-74,Zhu-07} for the scalar-valued theory, and to \cite{ArrBlas-03, Bla-90} for the vector-valued theory on these spaces. 

For any $1\leq p\leq\infty$, we say that a mapping $f\in\H(\D,X)$ is \textit{$p$-summing Bloch} if there is a constant $c\geq 0$ such that for any $n\in\mathbb{N}$, $\lambda_1,\ldots,\lambda_n\in\mathbb{C}$ and $z_1,\ldots,z_n\in\D$, we have 
\begin{align*}
\left(\sum_{i=1}^n\left|\lambda_i\right|^p\left\|f'(z_i)\right\|^p\right)^{\frac{1}{p}}&\leq c
\sup_{g\in B_{\widehat{\B}(\D)}}\left(\sum_{i=1}^n\left|\lambda_i\right|^p\left|g'(z_i)\right|^p\right)^{\frac{1}{p}}\quad & \text{if}\quad& 1\leq p<\infty, \\
\max_{1\leq i\leq n}\left|\lambda_i\right|\left\|f'(z_i)\right\|&\leq c \sup_{g\in B_{\widehat{\B}(\D)}}\left(\max_{1\leq i\leq n}\left|\lambda_i\right|\left|g'(z_i)\right|\right) \quad & \text{if}\quad &p=\infty.
\end{align*}
The infimum of all the constants $c$ for which such an inequality holds, denoted $\pi^{\B}_p(f)$, defines a seminorm on the linear space, denoted $\Pi^{\B}_p(\D,X)$, of all $p$-summing Bloch mappings $f\colon\D\to X$. Furthermore, this seminorm becomes a norm on the subspace $\Pi^{\widehat{\B}}_p(\D,X)$ consisting of all those mappings $f\in\Pi^{\B}_p(\D,X)$ so that $f(0)=0$.

These spaces enjoy nice properties in both complex and functional analytical frameworks. In the former setting, we show that the space $(\Pi^{\B}_p(\D,X),\pi^{\B}_p)$ is invariant by M\"obius transformations of $\D$. In the latter context and in a clear parallelism with the theory of absolutely $p$-summing linear operators (see \cite[Chapter 2]{DisJarTon-95}), we prove that $[\Pi^{\widehat{\B}}_p,\pi^{\B}_p]$ is an injective Banach ideal of normalized Bloch mappings whose elements can be characterized by means of Pietsch domination/factorization. Applying this Pietsch domination, we present a Bloch version of Maurey's extrapolation Theorem \cite{Mau-74}. 

On the other hand, the known duality of the Bloch spaces (see \cite{AndCluPom-74,ArrBlas-03,Zhu-07}) is extended to the spaces $(\Pi^{\widehat{\B}}_p(\D,X^*),\pi^{\B}_p)$ by identifying them with the duals of the spaces of the so-called \emph{$X$-valued Bloch molecules on $\D$}, equipped with the Bloch versions of the $p^*$-Chevet--Saphar tensor norms $d_{p^*}$. We conclude the paper with some open problems.

The proofs of some of our results are similar to those of their corresponding linear versions, but a detailed reading of them shows that the adaptation of the linear techniques to the Bloch setting is far from being simple. Our approach depends mainly on the application of some concepts and results concerning the theory on a strongly unique predual of the space $\widehat{\B}(\D)$, called \textit{Bloch-free Banach space over $\D$} that was introduced in \cite{JimRui-22}.\\

\textbf{Notation.} For two normed spaces $X$ and $Y$, $\mathcal{L}(X,Y)$ denotes the normed space of all bounded linear operators from $X$ to $Y$, equipped with the operator canonical norm. In particular, the topological dual space $\mathcal{L}(X,\mathbb{C})$ is denoted by $X^*$. For $x\in X$ and $x^*\in X^*$, we will sometimes write $\langle x^*,x\rangle=x^*(x)$. As usual, $B_X$ and $S_X$ stand for the closed unit ball of $X$ and the unit sphere of $X$, respectively. 
Let $\T$ and $\D$ denote the unit sphere and the unit open disc of $\C$, respectively. 

Given $1\leq p\leq\infty$, let $p^*$ denote the \textit{conjugate index of $p$} defined by 
$$
p^*=\left\{
\begin{array}{cll}
\infty & \text{if} & p=1, \\ 
p/(p-1) & \text{if} & 1<p<\infty, \\ 
1 & \text{if} & p=\infty .
\end{array}%
\right. 
$$


\section{$p$-Summing Bloch mappings on the unit disc}\label{2}

This section gathers the most important properties of $p$-summing Bloch mappings on $\D$. From now on, unless otherwise stated, $X$ will denote a complex Banach space.


\subsection{Inclusions}

We will first establish some useful inclusion relations. See first \cite[Satz 5]{Pie-67}.

The following class of Bloch functions will be used throughout the paper. For each $z\in\D$, the function $f_z\colon\D\to\C$ defined by 
$$
f_z(w)=\frac{(1-|z|^2)w}{1-\overline{z}w}\qquad (w\in\D),
$$ 
belongs to $\widehat{\B}(\D)$ with $p_{\B}(f_z)=1=(1-|z|^2)f_z'(z)$ (see \cite[Proposition 2.2]{JimRui-22}).

\begin{proposition}\label{prop-1}
Let $1\leq p<q\leq\infty$. Then $\Pi^{\B}_p(\D,X)\subseteq\Pi^{\B}_q(\D,X)$ with $\pi^{\B}_q(f)\leq\pi^{\B}_p(f)$ for all $f\in\Pi^{\B}_p(\D,X)$. Moreover, $\Pi^{\B}_\infty(\D,X)=\B(\D,X)$ with $\pi^{\B}_\infty(f)=p_\B(f)$ for all $f\in\Pi^{\B}_\infty(\D,X)$.
\end{proposition}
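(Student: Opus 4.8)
The plan is to prove the two assertions separately, using the class of test functions $f_z$ introduced just before the statement as the key tool for the lower bounds.

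For the inclusion $\Pi^{\B}_p(\D,X)\subseteq\Pi^{\B}_q(\D,X)$ with the norm inequality, I would follow the classical argument from the linear theory (cf.\ \cite[Chapter 2]{DisJarTon-95}). First, I would reduce to comparing the two sides of the defining inequalities on an arbitrary finite family $\lambda_1,\dots,\lambda_n\in\C$, $z_1,\dots,z_n\in\D$. The heart of the matter is the elementary inequality: for any nonnegative reals $a_1,\dots,a_n$ and $1\le p<q\le\infty$, one has $\|(a_i)\|_{\ell_q^n}\le\|(a_i)\|_{\ell_p^n}$. Applying this with $a_i=|\lambda_i|\,|g'(z_i)|$ shows that the right-hand ``Pietsch supremum'' for exponent $q$ is dominated by that for exponent $p$; but this goes the wrong way on its own, so the standard trick is needed: write $q$-th powers via Hölder with exponent $q/p>1$, splitting $|\lambda_i|^q\|f'(z_i)\|^q = \big(|\lambda_i|^p\|f'(z_i)\|^p\big)\cdot\big(|\lambda_i|^{q-p}\|f'(z_i)\|^{q-p}\big)$, and absorbing the second factor into the supremum using the $f_z$ functions, which satisfy $(1-|z|^2)f_z'(z)=1$ and $p_\B(f_z)=1$. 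I would handle $q=\infty$ as a limiting/notational variant of the same computation. The step I expect to be the main obstacle is getting the bookkeeping exactly right so that the constant works out to $\pi^{\B}_p(f)$ with no loss; in particular one must verify that the supremum over $g\in B_{\widehat{\B}(\D)}$ on the right can be taken after the Hölder splitting without degradation, which is where the specific normalization $p_\B(f_z)=1=(1-|z|^2)f_z'(z)$ is essential.

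For the identity $\Pi^{\B}_\infty(\D,X)=\B(\D,X)$ with $\pi^{\B}_\infty(f)=p_\B(f)$, I would argue both inclusions directly from the $p=\infty$ defining inequality
$$
\max_{1\le i\le n}|\lambda_i|\,\|f'(z_i)\|\le c\,\sup_{g\in B_{\widehat{\B}(\D)}}\Big(\max_{1\le i\le n}|\lambda_i|\,|g'(z_i)|\Big).
$$
Taking $n=1$, $\lambda_1=1$ and an arbitrary $z\in\D$, the right-hand side equals $c\sup_{g}|g'(z)| = c/(1-|z|^2)$, since $\sup_{g\in B_{\widehat{\B}(\D)}}(1-|z|^2)|g'(z)|=1$ (realized by $f_z$); hence $(1-|z|^2)\|f'(z)\|\le c$ for all $z$, giving $f\in\B(\D,X)$ and $p_\B(f)\le\pi^{\B}_\infty(f)$. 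Conversely, if $f\in\B(\D,X)$, then for any finite family, $|\lambda_i|\,\|f'(z_i)\|\le|\lambda_i|\,p_\B(f)/(1-|z_i|^2)$; I would pick an index $i_0$ achieving the max on the left, and choose $g=f_{z_{i_0}}\in B_{\widehat{\B}(\D)}$, for which the right-hand supremum is at least $|\lambda_{i_0}|\,|f_{z_{i_0}}'(z_{i_0})| = |\lambda_{i_0}|/(1-|z_{i_0}|^2)$, so the inequality holds with $c=p_\B(f)$. This yields $f\in\Pi^{\B}_\infty(\D,X)$ with $\pi^{\B}_\infty(f)\le p_\B(f)$, completing the proof. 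The only delicate point here is confirming that the right-hand ``max'' supremum is computed exactly by a single $f_z$; this follows because for a finite set of points the worst point dominates and $f_{z_{i_0}}$ is extremal at $z_{i_0}$.
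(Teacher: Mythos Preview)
Your plan is correct and matches the paper's approach: the identification $\Pi^{\B}_\infty(\D,X)=\B(\D,X)$ via the extremal functions $f_z$, and the inclusion $\Pi^{\B}_p\subseteq\Pi^{\B}_q$ via the classical H\"older reweighting trick from the linear theory.

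One clarification on the inclusion step for $q<\infty$: the functions $f_z$ play \emph{no} role there, contrary to what you anticipate. The paper's execution is to introduce new weights $\mu_i=|\lambda_i|^{q/p}\|f'(z_i)\|^{q/p-1}$ so that $\sum_i|\mu_i|^p\|f'(z_i)\|^p=\sum_i|\lambda_i|^q\|f'(z_i)\|^q$, apply the $p$-summing inequality with the weights $\mu_i$, and then estimate
\[
\sup_{g\in B_{\widehat{\B}(\D)}}\Bigl(\sum_i|\mu_i|^p|g'(z_i)|^p\Bigr)^{1/p}
=\sup_{g}\Bigl(\sum_i(|\lambda_i|\|f'(z_i)\|)^{q-p}(|\lambda_i||g'(z_i)|)^p\Bigr)^{1/p}
\]
by H\"older with exponents $q/p$ and $q/(q-p)$. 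The resulting factor $\bigl(\sum_i|\lambda_i|^q\|f'(z_i)\|^q\bigr)^{1/p-1/q}$ cancels against the left-hand side, giving the constant $\pi^{\B}_p(f)$ with no loss and no need for any extremal test function. Your instinct that the bookkeeping is the only obstacle is right; once you set up the $\mu_i$ correctly the argument is purely algebraic.
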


\begin{proof}
Let $n\in\mathbb{N}$, $\lambda_1,\ldots,\lambda_n\in\mathbb{C}$ and $z_1,\ldots,z_n\in\D$. We will first prove the second assertion. Let $f\in\Pi^{\B}_\infty(\D,X)$. For all $z\in\D$, we have
$$
(1-|z|^2)\left\|f'(z)\right\|\leq\pi^{\B}_\infty(f)\sup_{g\in B_{\widehat{\B}(\D)}}(1-|z|^2)\left|g'(z)\right|=\pi^{\B}_\infty(f),
$$
hence $f\in\B(\D,X)$ with $p_\B(f)\leq\pi^{\B}_\infty(f)$. Conversely, let $f\in\B(\D,X)$. For $i=1,\ldots,n$, we have 
$$
\left|\lambda_i\right|\left\|f'(z_i)\right\|\leq \frac{\left|\lambda_i\right|}{1-|z_i|^2}p_\B(f)=\left|\lambda_i\right|\left|f'_{z_i}(z_i)\right|p_\B(f)
\leq p_\B(f)\sup_{g\in B_{\widehat{\B}(\D)}}\left|\lambda_i\right|\left|g'(z_i)\right|,
$$
this implies that  
$$
\max_{1\leq i\leq n}\left|\lambda_i\right|\left\|f'(z_i)\right\|\leq p_\B(f)\sup_{g\in B_{\widehat{\B}(\D)}}\left(\max_{1\leq i\leq n}\left|\lambda_i\right|\left|g'(z_i)\right|\right),
$$
and thus $f\in\Pi^{\B}_\infty(\D,X)$ with $\pi^{\B}_\infty(f)\leq p_\B(f)$.

To prove the first assertion, let $f\in\Pi^{\B}_p(\D,X)$. Assume $q<\infty$. Taking $\beta_i=\left|\lambda_i\right|^{q/p}\left\|f'(z_i)\right\|^{(q/p)-1}$ for $i=1,\ldots,n$, we have 
$$
\left(\sum_{i=1}^n\left|\lambda_i\right|^q\left\|f'(z_i)\right\|^q\right)^{\frac{1}{p}}=\left(\sum_{i=1}^n\left|\beta_i\right|^p\left\|f'(z_i)\right\|^p\right)^{\frac{1}{p}}
\leq \pi^{\B}_p(f) \sup_{g\in B_{\widehat{\B}(\D)}}\left(\sum_{i=1}^n\left|\beta_i\right|^p\left|g'(z_i)\right|^p\right)^{\frac{1}{p}}.
$$
Since $q/p>1$ and $(q/p)^*=q/(q-p)$, H\"older Inequality yields
\begin{align*}
\sup_{g\in B_{\widehat{\B}(\D)}}\left(\sum_{i=1}^n\left|\beta_i\right|^p\left|g'(z_i)\right|^p\right)^{\frac{1}{p}}
&=\sup_{g\in B_{\widehat{\B}(\D)}}\left(\sum_{i=1}^n\left(\left|\lambda_i\right|\left\|f'(z_i)\right\|\right)^{q-p}\left(\left|\lambda_i\right|\left|g'(z_i)\right|\right)^p\right)^{\frac{1}{p}}\\
&\leq\left(\sum_{i=1}^n\left|\lambda_i\right|^q\left\|f'(z_i)\right\|^q\right)^{\frac{1}{p}-\frac{1}{q}}\sup_{g\in B_{\widehat{\B}(\D)}}\left(\sum_{i=1}^n\left|\lambda_i\right|^q\left|g'(z_i)\right|^q\right)^{\frac{1}{q}},
\end{align*}
and thus we obtain 
$$
\left(\sum_{i=1}^n\left|\lambda_i\right|^q\left\|f'(z_i)\right\|^q\right)^{\frac{1}{q}}\leq \pi^{\B}_p(f) \sup_{g\in B_{\widehat{\B}(\D)}}\left(\sum_{i=1}^n\left|\lambda_i\right|^q\left|g'(z_i)\right|^q\right)^{\frac{1}{q}}.
$$
This shows that $f\in\Pi^{\B}_q(\D,X)$ with $\pi^{\B}_q(f)\leq\pi^{\B}_p(f)$ if $q<\infty$. For the case $q=\infty$, note that  
$$
(1-|z|^2)\left\|f'(z)\right\|\leq\pi^{\B}_p(f)\sup_{g\in B_{\widehat{\B}(\D)}}(1-|z|^2)\left|g'(z)\right|=\pi^{\B}_p(f)
$$
for all $z\in\D$, and thus $f\in\B(\D,X)=\Pi^{\B}_\infty(\D,X)$ with $\pi^{\B}_\infty(f)=p_{\B}(f)\leq\pi^{\B}_p(f)$.
\end{proof}


\subsection{Injective Banach ideal property}


Let us recall (see \cite[Definition 5.11]{JimRui-22}) that a \textit{normalized Bloch ideal} is a subclass $\I^{\widehat{\B}}$ of the class of all normalized Bloch mappings $\widehat{\B}$ such that for every complex Banach space $X$, the components
$$
\I^{\widehat{\B}}(\D,X):=\I^{\widehat{\B}}\cap\widehat{\B}(\D,X),
$$
satisfy the following properties:
\begin{itemize}
\item[(I1)] $\I^{\widehat{\B}}(\D,X)$ is a linear subspace of $\widehat{\B}(\D,X)$,
\item[(I2)] For every $g\in\widehat{\B}(\D)$ and $x\in X$, the mapping $g \cdot x\colon z\mapsto g(z)x$ from $\D$ to $X$ is in $\I^{\widehat{\B}}(\D,X)$,
\item[(I3)] The \textit{ideal property}: if $f\in\I^{\widehat{\B}}(\D,X)$, $h\colon\D\to\D$ is a holomorphic function with $h(0)=0$ and $T\in\L(X,Y)$ where $Y$ is a complex Banach space, then $T\circ f\circ h$ belongs to $\I^{\widehat{\B}}(\D,Y)$.
\end{itemize}

A normalized Bloch ideal $\I^{\widehat{\B}}$ is said to be \textit{normed (Banach)} if there is a function $\|\cdot\|_{\I^{\widehat{\B}}}\colon\I^{\widehat{\B}}\to\mathbb{R}_0^+$ such that for every complex Banach space $X$, the following three conditions are satisfied:
\begin{itemize}
\item[(N1)] $(\I^{\widehat{\B}}(\D,X),\|\cdot\|_{\I^{\widehat{\B}}})$ is a normed (Banach) space with $p_\B(f)\leq\|f\|_{\I^{\widehat{\B}}}$ for all $f\in\I^{\widehat{\B}}(\D,X)$,
\item[(N2)] $\|g\cdot x\|_{\I^{\widehat{\B}}}=p_\B(g)\left\|x\right\|$ for all $g\in\widehat{\B}(\D)$ and $x\in X$,
\item[(N3)] If $h\colon\D\to\D$ is a holomorphic function with $h(0)=0$, $f\in\I^{\widehat{\B}}(\D,X)$ and $T\in\L(X,Y)$ where $Y$ is a complex Banach space, then $\|T\circ f\circ h\|_{\I^{\widehat{\B}}}\leq\|T\|\,\|f\|_{\I^{\widehat{\B}}}$.
\end{itemize}

A normed normalized Bloch ideal $[\I^{\widehat{\B}},\|\cdot\|_{\I^{\widehat{\B}}}]$ is said to be:
\begin{itemize}
\item[(I)] \textit{Injective} if for any mapping $f\in\widehat{\B}(\D,X)$, any complex Banach space $Y$ and any isometric linear embedding $\iota\colon X\to Y$, we have that $f\in\I^{\widehat{\B}}(\D,X)$ with $\left\|f\right\|_{\I^{\widehat{\B}}}=\left\|\iota\circ f\right\|_{\I^{\widehat{\B}}}$ whenever $\iota\circ f\in\I^{\widehat{\B}}(\D,Y)$.
\end{itemize}

We are now ready to establish the following result which can be compared to \cite[Satzs 1--4]{Pie-67}.

\begin{proposition}\label{ideal summing} 
$[\Pi^{\widehat{\B}}_p,\pi^{\B}_p]$ is an injective Banach normalized Bloch ideal for any $1\leq p\leq\infty$.
\end{proposition}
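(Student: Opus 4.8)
The plan is to verify, one by one, the conditions (I1)--(I3), (N1)--(N3) and the injectivity condition (I) in the definition recalled just before the statement. As announced in the introduction, each of these is an adaptation of the corresponding fact for absolutely $p$-summing operators, the only genuinely new ingredient being a change of variables governed by the Schwarz--Pick lemma. I would carry out the argument for $1\le p<\infty$ and merely remark that the case $p=\infty$ is handled identically with sums replaced by maxima (and is, in any case, covered by the identification $\Pi^{\widehat{\B}}_\infty(\D,X)=\widehat{\B}(\D,X)$ from Proposition~\ref{prop-1}).

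\textbf{Linear and normed structure.} First I would record that $p_\B(f)\le\pi^{\B}_p(f)$ for every $f\in\Pi^{\B}_p(\D,X)$: this is exactly the inclusion chain $\Pi^{\B}_p(\D,X)\subseteq\Pi^{\B}_\infty(\D,X)=\B(\D,X)$ together with $\pi^{\B}_\infty=p_\B$ from Proposition~\ref{prop-1}. Hence $\Pi^{\widehat{\B}}_p(\D,X)\subseteq\widehat{\B}(\D,X)$ and $\pi^{\B}_p$ is genuinely a norm there, which gives the first half of (N1). Condition (I1) and the triangle inequality for $\pi^{\B}_p$ follow by applying Minkowski's inequality in $\ell_p^n$ to the vectors $\big(|\lambda_i|\,\|f'(z_i)\|\big)_{i=1}^n$. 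For (I2) and (N2), given $g\in\widehat{\B}(\D)$ and $x\in X$ one has $(g\cdot x)'(z)=g'(z)x$, so $\|(g\cdot x)'(z_i)\|=|g'(z_i)|\,\|x\|$; dividing $g$ by $p_\B(g)$ (the case $p_\B(g)=0$ being trivial, since then $g\equiv g(0)=0$) turns the defining inequality into $\pi^{\B}_p(g\cdot x)\le p_\B(g)\,\|x\|$, while the reverse inequality is simply $\pi^{\B}_p(g\cdot x)\ge p_\B(g\cdot x)=p_\B(g)\,\|x\|$.

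\textbf{Ideal property (I3) and (N3).} This is the step I expect to carry the weight of the proof. Given $f\in\Pi^{\B}_p(\D,X)$, a holomorphic $h\colon\D\to\D$ with $h(0)=0$, an operator $T\in\L(X,Y)$, and data $n,\lambda_1,\dots,\lambda_n,z_1,\dots,z_n$, set $F=T\circ f\circ h$. The chain rule gives $F'(z)=h'(z)\,T\big(f'(h(z))\big)$, hence $\|F'(z_i)\|\le\|T\|\,|h'(z_i)|\,\|f'(h(z_i))\|$. Applying the $p$-summing inequality for $f$ with the scalars $\mu_i:=\lambda_i h'(z_i)$ and points $w_i:=h(z_i)\in\D$ bounds $\big(\sum_{i=1}^n|\lambda_i|^p\|F'(z_i)\|^p\big)^{1/p}$ by $\|T\|\,\pi^{\B}_p(f)\,\sup_{g\in B_{\widehat{\B}(\D)}}\big(\sum_{i=1}^n|\mu_i|^p|g'(w_i)|^p\big)^{1/p}$. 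The crucial observation is that $|\mu_i|\,|g'(w_i)|=|h'(z_i)|\,|g'(h(z_i))|=|(g\circ h)'(z_i)|$, that $(g\circ h)(0)=g(0)=0$, and that, by the Schwarz--Pick inequality $(1-|z|^2)|h'(z)|\le 1-|h(z)|^2$, one has $p_\B(g\circ h)\le p_\B(g)$; thus $g\mapsto g\circ h$ maps $B_{\widehat{\B}(\D)}$ into itself and the supremum above is at most $\sup_{\widetilde g\in B_{\widehat{\B}(\D)}}\big(\sum_{i=1}^n|\lambda_i|^p|\widetilde g'(z_i)|^p\big)^{1/p}$. Therefore $F\in\Pi^{\B}_p(\D,Y)$ with $\pi^{\B}_p(F)\le\|T\|\,\pi^{\B}_p(f)$, and $F(0)=T\big(f(h(0))\big)=T(f(0))=0$ whenever $f(0)=0$, which yields (I3) and (N3).

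\textbf{Completeness (N1) and injectivity (I).} For completeness, I would take a Cauchy sequence $(f_k)$ in $(\Pi^{\widehat{\B}}_p(\D,X),\pi^{\B}_p)$; since $p_\B\le\pi^{\B}_p$ it is Cauchy in the Banach space $\widehat{\B}(\D,X)$, so $f_k\to f$ there, and in particular $f_k'(z)\to f'(z)$ in $X$ for every $z\in\D$ because $(1-|z|^2)\|f_k'(z)-f'(z)\|\le p_\B(f_k-f)$. Fixing $n,\lambda_i,z_i$, writing the $p$-summing inequality for $f_k-f_l$ and letting $l\to\infty$ shows that $f_k-f\in\Pi^{\B}_p(\D,X)$ with $\pi^{\B}_p(f_k-f)$ arbitrarily small for $k$ large; hence $f\in\Pi^{\widehat{\B}}_p(\D,X)$ and $f_k\to f$ in $\pi^{\B}_p$. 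Finally, for injectivity let $\iota\colon X\to Y$ be an isometric linear embedding with $\iota\circ f\in\Pi^{\widehat{\B}}_p(\D,Y)$; since $(\iota\circ f)'(z)=\iota(f'(z))$ and $\iota$ is isometric, $\|(\iota\circ f)'(z_i)\|=\|f'(z_i)\|$, so the $p$-summing inequality for $\iota\circ f$ is verbatim the one for $f$, giving $f\in\Pi^{\B}_p(\D,X)$ with $\pi^{\B}_p(f)\le\pi^{\B}_p(\iota\circ f)$; the reverse inequality is (N3) applied with $T=\iota$ and $h=\id_\D$. Hence $\pi^{\B}_p(f)=\pi^{\B}_p(\iota\circ f)$, completing the verification.
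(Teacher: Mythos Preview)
Your proof is correct and follows essentially the same route as the paper: the verification of (I1)--(I3), (N1)--(N3) and (I) is carried out in the same way, the key step being the chain rule together with the Schwarz--Pick bound $p_\B(g\circ h)\le p_\B(g)$ in (N3). The only cosmetic differences are that the paper proves completeness via absolutely summable series rather than Cauchy sequences, and cites \cite[Proposition~3.6]{JimRui-22} for the Schwarz--Pick inequality instead of writing it out.
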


\begin{proof}
Note that $\Pi^{\widehat{\B}}_p(\D,X)\subseteq\widehat{\B}(\D,X)$ with $p_{\B}(f)\leq\pi^{\B}_p(f)$ for all $f\in\Pi^{\widehat{\B}}_p(\D,X)$ by Proposition \ref{prop-1}.

We will only prove the case $1<p<\infty$. The cases $p=1$ and $p=\infty$ follow similarly. Let $n\in\mathbb{N}$, $\lambda_1,\ldots,\lambda_n\in\mathbb{C}$ and $z_1,\ldots,z_n\in\D$. 

(N1) If $f\in\Pi^{\widehat{\B}}_p(\D,X)$ and $\pi^{\B}_p(f)=0$, then $p_{\B}(f)=0$, and so $f=0$. Given $f_1,f_2\in\Pi^{\widehat{\B}}_p(\D,X)$, we have 
\begin{align*}
\left(\sum_{i=1}^n\left|\lambda_i\right|^p\left\|(f_1+f_2)'(z_i)\right\|^p\right)^{\frac{1}{p}}&
\leq\left(\sum_{i=1}^n\left|\lambda_i\right|^p\left(\left\|f'_1(z_i)\right\|^p+\left\|f'_2(z_i)\right\|^p\right)\right)^{\frac{1}{p}}\\
&\leq\left(\sum_{i=1}^n\left|\lambda_i\right|^p\left\|f_1'(z_i)\right\|^p\right)^{\frac{1}{p}}
+\left(\sum_{i=1}^n\left|\lambda_i\right|^p\left\|f_2'(z_i)\right\|^p\right)^{\frac{1}{p}}\\
&\leq\left(\pi^{\B}_p(f_1)+\pi^{\B}_p(f_2)\right)\sup_{g\in B_{\widehat{\B}(\D)}}\left(\sum_{i=1}^n\left|\lambda_i\right|^p\left|g'(z_i)\right|^p\right)^{\frac{1}{p}},
\end{align*}
and therefore $f_1+f_2\in\Pi^{\widehat{\B}}_p(\D,X)$ with $\pi^{\B}_p(f_1+f_2)\leq\pi^{\B}_p(f_1)+\pi^{\B}_p(f_2)$.

Let $\lambda\in\mathbb{C}$ and $f\in\Pi^{\widehat{\B}}_p(\D,X)$. We have 
\begin{align*}
\left(\sum_{i=1}^n\left|\lambda_i\right|^p\left\|(\lambda f)'(z_i)\right\|^p\right)^{\frac{1}{p}}
&=\left|\lambda\right|\left(\sum_{i=1}^n\left|\lambda_i\right|^p\left\|f'(z_i)\right\|^p\right)^{\frac{1}{p}}\\
&\leq\left|\lambda\right|\pi^{\B}_p(f)\sup_{g\in B_{\widehat{\B}(\D)}}\left(\sum_{i=1}^n\left|\lambda_i\right|^p\left|g'(z_i)\right|^p\right)^{\frac{1}{p}},
\end{align*}
and thus $\lambda f\in\Pi^{\widehat{\B}}_p(\D,X)$ with $\pi^{\B}_p(\lambda f)\leq|\lambda|\pi^{\B}_p(f)$. This implies that $\pi^{\B}_p(\lambda f)=0=\left|\lambda\right|\pi^{\B}_p(f)$ if $\lambda=0$. For $\lambda\neq 0$, we have $\pi^{\B}_p(f)=\pi^{\B}_p(\lambda^{-1}(\lambda f))\leq\left|\lambda\right|^{-1}\pi^{\B}_p(\lambda f)$, hence $\left|\lambda\right|\pi^{\B}_p(f)\leq\pi^{\B}_p(\lambda f)$, and so $\pi^{\B}_p(\lambda f)=\left|\lambda\right|\pi^{\B}_p(f)$. Thus we have proved that $\left(\Pi^{\widehat{\B}}_p(\D,X),\pi^{\B}_p\right)$ is a normed space.

To show that it is a Banach space, it is enough to see that every absolutely convergent series is convergent. So let $(f_n)_{n\geq 1}$ be a sequence in $\Pi^{\widehat{\B}}_p(\D,X)$ such that $\sum\pi^{\B}_p(f_n)$ converges. Since $p_{\B}(f_n)\leq\pi^{\B}_p(f_n)$ for all $n\in\mathbb{N}$ and $\left(\widehat{\B}(\D,X),p_\B\right)$ is a Banach space, then $\sum f_n$ converges in $\left(\widehat{\B}(\D,X),p_\B\right)$ to a function $f\in\widehat{\B}(\D,X)$. Given $m\in\mathbb{N}$, $z_1,\ldots,z_m\in\D$ and $\lambda_1,\ldots,\lambda_m\in\mathbb{C}$, we have 
\begin{align*}
\left(\sum_{k=1}^m\left|\lambda_k\right|^p\left\|\sum_{i=1}^nf'_i(z_k)\right\|^p\right)^{\frac{1}{p}}
&\leq \pi^{\B}_p\left(\sum_{i=1}^nf_i\right)\sup_{g\in B_{\widehat{\B}(\D)}}\left(\sum_{k=1}^m\left|\lambda_k\right|^p\left|g'(z_k)\right|^p\right)^{\frac{1}{p}}\\
&\leq \sum_{i=1}^n\pi^{\B}_p(f_i)\sup_{g\in B_{\widehat{\B}(\D)}}\left(\sum_{k=1}^m\left|\lambda_k\right|^p\left|g'(z_k)\right|^p\right)^{\frac{1}{p}}\\
\end{align*}
for all $n\in\mathbb{N}$, and by taking limits with $n\to\infty$ yields 
$$
\left(\sum_{k=1}^m\left|\lambda_k\right|^p\left\|\sum_{i=1}^\infty f'_i(z_k)\right\|^p\right)^{\frac{1}{p}}
\leq \sum_{i=1}^\infty\pi^{\B}_p(f_i)\sup_{g\in B_{\widehat{\B}(\D)}}\left(\sum_{k=1}^m\left|\lambda_k\right|^p\left|g'(z_k)\right|^p\right)^{\frac{1}{p}}.
$$
Hence $f\in\Pi^{\widehat{\B}}_p(\D,X)$ with $\pi^{\B}_p(f)\leq\sum_{n=1}^\infty \pi^{\B}_p(f_n)$. Moreover, we have 
$$
\pi^{\B}_p\left(f-\sum_{i=1}^nf_i\right)=\pi^{\B}_p\left(\sum_{i=n+1}^\infty f_i\right)\leq\sum_{i=n+1}^\infty \pi^{\B}_p(f_i) 
$$
for all $n\in\mathbb{N}$, and thus $f$ is the $\pi^{\B}_p$-limit of the series $\sum f_n$.

(N2) Let $g\in\widehat{\B}(\D)$ and $x\in X$. Note that $g\cdot x\in\widehat{\B}(\D,X)$ with $p_\B(g\cdot x)=p_\B(g)\left\|x\right\|$ by \cite[Proposition 5.13]{JimRui-22}. If $g=0$, there is nothing to prove. Assume $g\neq 0$. We have 
\begin{align*}
\left(\sum_{i=1}^n\left|\lambda_i\right|^p\left\|(g\cdot x)'(z_i)\right\|^p\right)^{\frac{1}{p}}
&=\left\|x\right\|p_\B(g)\left(\sum_{i=1}^n\left|\lambda_i\right|^p\left|\left(\frac{g}{p_\B(g)}\right)'(z_i)\right|^p\right)^{\frac{1}{p}}\\
&\leq\left\|x\right\|p_\B(g)\sup_{h\in B_{\widehat{\B}(\D)}}\left(\sum_{i=1}^n\left|\lambda_i\right|^p\left|h'(z_i)\right|^p\right)^{\frac{1}{p}},
\end{align*}
and thus $g\cdot x\in\Pi^{\widehat{\B}}_p(\D,X)$ with $\pi^{\B}_p(g\cdot x)\leq p_\B(g)\left\|x\right\|$. Conversely, we have 
$$
p_\B(g)\left\|x\right\|=p_\B(g\cdot x)\leq\pi^{\B}_p(g\cdot x).
$$

(N3) Let $h\colon\D\to\D$ be a holomorphic function with $h(0)=0$, $f\in\Pi^{\widehat{\B}}_p(\D,X)$ and $T\in\L(X,Y)$ where $Y$ is a complex Banach space. Note that $T\circ f\circ h\in\widehat{\B}(\D,Y)$ 
by \cite[Proposition 5.13]{JimRui-22}. We have 
\begin{align*}
\left(\sum_{i=1}^n\left|\lambda_i\right|^p\left\|(T\circ f\circ h)'(z_i)\right\|^p\right)^{\frac{1}{p}}
&=\left(\sum_{i=1}^n\left|\lambda_i\right|^p\left\|T(f'(h(z_i))h'(z_i))\right\|^p\right)^{\frac{1}{p}}\\
&\leq\left\|T\right\|\left(\sum_{i=1}^n\left|\lambda_i\right|^p\left|h'(z_i)\right|^p\left\|f'(h(z_i))\right\|^p\right)^{\frac{1}{p}}\\
&\leq \left\|T\right\|\pi^{\B}_p(f)\sup_{g\in B_{\widehat{\B}(\D)}}\left(\sum_{i=1}^n\left|\lambda_i\right|^p\left|h'(z_i)\right|^p\left|g'(h(z_i))\right|^p\right)^{\frac{1}{p}}\\
&=\left\|T\right\|\pi^{\B}_p(f)\sup_{g\in B_{\widehat{\B}(\D)}}\left(\sum_{i=1}^n\left|\lambda_i\right|^p\left|(g\circ h)'(z_i)\right|^p\right)^{\frac{1}{p}}\\
&\leq\left\|T\right\|\pi^{\B}_p(f)\sup_{k\in B_{\widehat{\B}(\D)}}\left(\sum_{i=1}^n\left|\lambda_i\right|^p\left|k'(z_i)\right|^p\right)^{\frac{1}{p}},
\end{align*}
where we have used that $p_\B(g\circ h)\leq p_\B(g)$ by \cite[Proposition 3.6]{JimRui-22}. Therefore $T\circ f\circ h\in\Pi^{\widehat{\B}}_p(\D,Y)$ with $\pi^{\B}_p(T\circ f\circ h)\leq\left\|T\right\|\pi^{\B}_p(f)$.

(I) Let $f\in\widehat{\B}(\D,X)$ and let $\iota\colon X\to Y$ be a linear (not necessarily surjective) isometry. Assume that $\iota\circ f\in\Pi^{\widehat{\B}}_p(\D,Y)$. We have 
\begin{align*}
\left(\sum_{i=1}^n\left|\lambda_i\right|^p\left\|f'(z_i)\right\|^p\right)^{\frac{1}{p}}
&=\left(\sum_{i=1}^n\left|\lambda_i\right|^p\left\|\iota(f'(z_i))\right\|^p\right)^{\frac{1}{p}}=\left(\sum_{i=1}^n\left|\lambda_i\right|^p\left\|(\iota\circ f)'(z_i))\right\|^p\right)^{\frac{1}{p}}\\
&\leq \pi^{\B}_p(\iota\circ f)\sup_{g\in B_{\widehat{\B}(\D)}}\left(\sum_{i=1}^n\left|\lambda_i\right|^p\left|g'(z_i)\right|^p\right)^{\frac{1}{p}}
\end{align*}
and thus $f\in\Pi^{\widehat{\B}}_p(\D,X)$ with $\pi^{\B}_p(f)\leq\pi^{\B}_p(\iota\circ f)$. The reverse inequality follows from (N3).
\end{proof}


\subsection{M\"obius invariance}

The \textit{M\"obius group of $\D$}, denoted $\Aut(\D)$, is formed by all biholomorphic bijections $\phi\colon\D\to\D$. Each $\phi\in\Aut(\D)$ has the form $\phi=\lambda\phi_a$ with $\lambda\in\T$ and $a\in\D$, where
$$
\phi_a(z)=\frac{a-z}{1-\overline{a}z}\qquad (z\in\D).
$$

Given a complex Banach space $X$, let us recall (see \cite{AraFisPee-85}) that a linear space $\A(\D,X)$ of holomorphic mappings from $\D$ into $X$, endowed with a seminorm $p_\A$, is \textit{M\"obius-invariant} if it holds: 
\begin{enumerate}
	\item[(i)] $\A(\D,X)\subseteq \B(\D,X)$ and there exists $c>0$ such that $p_\B(f)\leq cp_\A(f)$ for all $f\in\A(\D,X)$,
	\item[(ii)] $f\circ\phi\in\A(\D,X)$ with $p_\A(f\circ\phi)=p_\A(f)$ for all $\phi\in\Aut(\D)$ and $f\in\A(\D,X)$.
\end{enumerate}

By Proposition \ref{prop-1}, each $p$-summing Bloch mapping $f\colon\D\to X$ is Bloch with $p_\B(f)\leq\pi_p^{\B}(f)$. Moreover, following the argument of the proof of (N3) in Proposition \ref{ideal summing}, it is easy to prove that if $f\colon\D\to X$ is $p$-summing Bloch and $\phi\in\Aut(\D)$, then $f\circ\phi$ is $p$-summing with $\pi^{\B}_p(f\circ\phi)\leq\pi^{\B}_p(f)$, and using this fact we also deduce that $\pi^{\B}_p(f)=\pi^{\B}_p((f\circ\phi)\circ\phi^{-1})\leq\pi^{\B}_p(f\circ\phi)$. In this way we have proved the following.

\begin{proposition}\label{prop-3-1}
$(\Pi^{\B}_p(\D,X),\pi^{\B}_p)$ is a M\"obius-invariant space for $1\leq p\leq\infty$. $\hfill\qed$ 
\end{proposition}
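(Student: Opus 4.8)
The plan is to check the two defining properties of a M\"obius-invariant space in turn. Property (1) is immediate: by Proposition \ref{prop-1} every $f\in\Pi^{\B}_p(\D,X)$ lies in $\Pi^{\B}_\infty(\D,X)=\B(\D,X)$ and satisfies $p_\B(f)=\pi^{\B}_\infty(f)\leq\pi^{\B}_p(f)$, so (1) holds with $c=1$. It then remains to prove that $f\circ\phi\in\Pi^{\B}_p(\D,X)$ with $\pi^{\B}_p(f\circ\phi)=\pi^{\B}_p(f)$ for every $\phi\in\Aut(\D)$ and every $f\in\Pi^{\B}_p(\D,X)$.

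For the inequality $\pi^{\B}_p(f\circ\phi)\leq\pi^{\B}_p(f)$, I would run the computation already used for (N3) in Proposition \ref{ideal summing}, with $\phi$ in place of the zero-preserving map $h$. Given $n\in\N$, scalars $\lambda_1,\dots,\lambda_n$ and points $z_1,\dots,z_n\in\D$, the chain rule gives $(f\circ\phi)'(z_i)=\phi'(z_i)f'(\phi(z_i))$; setting $\mu_i=\lambda_i\phi'(z_i)$ and $w_i=\phi(z_i)$ turns $\sum_i|\lambda_i|^p\|(f\circ\phi)'(z_i)\|^p$ into $\sum_i|\mu_i|^p\|f'(w_i)\|^p$, and the definition of $\pi^{\B}_p(f)$ applied to the data $(\mu_i,w_i)$ bounds this by $\pi^{\B}_p(f)^p\,\sup_{g\in B_{\widehat{\B}(\D)}}\sum_i|\mu_i|^p|g'(w_i)|^p$. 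The inner sum rewrites as $\sum_i|\lambda_i|^p|(g\circ\phi)'(z_i)|^p$, so I am left to compare $\sup_{g\in B_{\widehat{\B}(\D)}}(\sum_i|\lambda_i|^p|(g\circ\phi)'(z_i)|^p)^{1/p}$ with $\sup_{k\in B_{\widehat{\B}(\D)}}(\sum_i|\lambda_i|^p|k'(z_i)|^p)^{1/p}$.

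The key point — and the only place where $\phi$ being a genuine automorphism rather than an arbitrary holomorphic self-map of $\D$ matters — is that $g\circ\phi$ need not vanish at $0$. I would therefore replace it by $k:=g\circ\phi-g(\phi(0))$, which has the same derivative and satisfies $p_\B(k)=p_\B(g\circ\phi)=p_\B(g)\leq1$; here the middle equality uses $p_\B(g\circ h)\leq p_\B(g)$ from \cite[Proposition 3.6]{JimRui-22} applied both to $h=\phi$ and to $h=\phi^{-1}\in\Aut(\D)$. Thus $k\in B_{\widehat{\B}(\D)}$, the desired comparison of suprema follows, and we obtain $f\circ\phi\in\Pi^{\B}_p(\D,X)$ with $\pi^{\B}_p(f\circ\phi)\leq\pi^{\B}_p(f)$. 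Applying this with $f\circ\phi$ and $\phi^{-1}$ in place of $f$ and $\phi$ yields $\pi^{\B}_p(f)=\pi^{\B}_p((f\circ\phi)\circ\phi^{-1})\leq\pi^{\B}_p(f\circ\phi)$, hence equality. The case $p=\infty$ is identical with sums replaced by maxima (or reduces to the classical M\"obius invariance of $p_\B$ via $\Pi^{\B}_\infty=\B$). I do not expect a serious obstacle here: the only subtlety is the normalization of $g\circ\phi$ at the origin, which the constant-subtraction trick settles once one knows that $p_\B$ is unchanged under composition with automorphisms of $\D$.
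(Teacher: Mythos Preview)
Your proposal is correct and follows essentially the same route as the paper: invoke Proposition~\ref{prop-1} for property~(1), rerun the (N3) computation from Proposition~\ref{ideal summing} with $\phi$ in place of $h$ to get $\pi^{\B}_p(f\circ\phi)\leq\pi^{\B}_p(f)$, and then apply this to $\phi^{-1}$ for the reverse inequality. You are in fact more explicit than the paper about the one genuine wrinkle---that $g\circ\phi$ need not lie in $\widehat{\B}(\D)$ when $\phi(0)\neq 0$---and your constant-subtraction fix is exactly right; the paper simply gestures at the (N3) argument without spelling this out.
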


\subsection{Pietsch domination}

We establish a version for $p$-summing Bloch mappings on $\D$ of the known Pietsch domination Theorem for $p$-summing linear operators between Banach spaces \cite[Theorem 2]{Pie-67}. 

Let us recall that $\widehat{\B}(\D)$ is a dual Banach space (see \cite{AndCluPom-74}) and therefore we can consider this space equipped with its weak* topology. Let $\P(B_{\widehat{\B}(\D)})$ denote the set of all Borel regular probability measures $\mu$ on $(B_{\widehat{\B}(\D)},w^*)$.

\begin{theorem}\label{Pietsch} 
Let $1\leq p<\infty$ and $f\in\widehat{\B}(\D,X)$. The following statements are equivalent:
\begin{enumerate}
\item[(i)] $f$ is $p$-summing Bloch.
\item[(ii)] (Pietsch domination). There is a constant $c\geq 0$ and a Borel regular probability measure $\mu$ on $(B_{\widehat{\B}(\D)},w^*)$ such that 
$$
\left\|f'(z)\right\|\leq c\left(\int_{B_{\widehat{\B}(\D)}}\left\vert g'(z)\right\vert^{p}d\mu(g)\right)^{\frac{1}{p}}
$$
for all $z\in \D$.
\end{enumerate}
In this case, $\pi^{\B}_p(f)$ is the infimum of all constants $c\geq 0$ satisfying the preceding inequality, and this infimum is attained.
\end{theorem}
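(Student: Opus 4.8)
The plan is to run the classical Pietsch domination argument adapted to the Bloch setting, using the weak* compactness of $B_{\widehat{\B}(\D)}$ together with a Hahn--Banach separation of two convex cones in the space $C(B_{\widehat{\B}(\D)})$ of continuous real functions on the compact set $(B_{\widehat{\B}(\D)},w^*)$. The easy implication is $(ii)\Rightarrow(i)$: given such a measure $\mu$ and constant $c$, for any $n\in\N$, $\lambda_1,\dots,\lambda_n\in\C$ and $z_1,\dots,z_n\in\D$ one raises the domination inequality to the $p$-th power, multiplies by $|\lambda_i|^p$, sums over $i$, and interchanges the finite sum with the integral to get $\sum_i|\lambda_i|^p\|f'(z_i)\|^p\le c^p\int_{B_{\widehat{\B}(\D)}}\sum_i|\lambda_i|^p|g'(z_i)|^p\,d\mu(g)\le c^p\sup_{g\in B_{\widehat{\B}(\D)}}\sum_i|\lambda_i|^p|g'(z_i)|^p$, since $\mu$ is a probability measure. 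Hence $f$ is $p$-summing Bloch with $\pi^{\B}_p(f)\le c$, which also gives ``$\le$'' for the infimum formula.

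For $(i)\Rightarrow(ii)$, I would first fix $c=\pi^{\B}_p(f)$ and assume $c>0$ (the case $c=0$ forces $f'\equiv 0$, so $f=0$, and any probability measure works). For each finite family $F=\{(\lambda_i,z_i)\}_{i=1}^n$ define the continuous function $\Phi_F\in C(B_{\widehat{\B}(\D)})$ by $\Phi_F(g)=\sum_{i=1}^n|\lambda_i|^p\big(c^p|g'(z_i)|^p-\|f'(z_i)\|^p\big)$; continuity in the weak* topology holds because point evaluations of derivatives $g\mapsto g'(z)$ are weak* continuous on $\widehat{\B}(\D)$ (this is where the dual structure of $\widehat{\B}(\D)$, via \cite{AndCluPom-74} or the Bloch-free space predual of \cite{JimRui-22}, enters). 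Let $\mathcal{C}$ be the convex cone in $C(B_{\widehat{\B}(\D)})$ generated by all such $\Phi_F$. The defining inequality of $\pi^{\B}_p$ says precisely that every $\Phi\in\mathcal{C}$ satisfies $\max_{g\in B_{\widehat{\B}(\D)}}\Phi(g)\ge 0$, i.e. $\mathcal{C}$ is disjoint from the open cone $\mathcal{N}=\{\varphi\in C(B_{\widehat{\B}(\D)}):\max_g\varphi(g)<0\}$ of functions that are strictly negative everywhere. Both are convex, $\mathcal{N}$ is open and nonempty, so by Hahn--Banach separation there is a nonzero $\Lambda\in C(B_{\widehat{\B}(\D)})^*$ and a real $\alpha$ with $\Lambda(\varphi)\le\alpha\le\Lambda(\Phi)$ for all $\varphi\in\mathcal{N}$, $\Phi\in\mathcal{C}$. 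Standard cone arguments force $\alpha=0$, $\Lambda\ge 0$ on nonnegative functions (hence $\Lambda$ is a positive functional, i.e. a positive measure by Riesz), and after normalizing $\Lambda$ to a probability measure $\mu\in\P(B_{\widehat{\B}(\D)})$ one gets $\int_{B_{\widehat{\B}(\D)}}\Phi_F\,d\mu\ge 0$ for every $F$.

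Applying this to the singleton family $F=\{(1,z)\}$ for each $z\in\D$ yields $\|f'(z)\|^p\le c^p\int_{B_{\widehat{\B}(\D)}}|g'(z)|^p\,d\mu(g)$, which is exactly the Pietsch domination inequality with constant $c=\pi^{\B}_p(f)$; combined with the reverse inequality from $(ii)\Rightarrow(i)$ this shows the infimum equals $\pi^{\B}_p(f)$ and is attained at $\mu$. The main obstacle I anticipate is purely the weak* continuity and compactness bookkeeping: one must be careful that $B_{\widehat{\B}(\D)}$ is weak* compact (it is, being the closed unit ball of a dual Banach space by Banach--Alaoglu, using that $\widehat{\B}(\D)$ is a dual) and that $g\mapsto g'(z)$ is genuinely weak* continuous on it --- this is guaranteed because the evaluation-of-derivative functional $g\mapsto g'(z)$ is represented by an element of the predual, namely (a multiple of) the image of an appropriate Bloch atom under the canonical map into the Bloch-free space of \cite{JimRui-22}. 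Once this is in hand, the separation argument is routine; the adaptation to $p=1$ is identical, and I would remark that the $p$-th powers cause no trouble since $t\mapsto t^p$ is continuous and the families are finite.
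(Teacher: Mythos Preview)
Your argument is correct and essentially self-contained: you carry out the classical Pietsch separation argument directly in $C(B_{\widehat{\B}(\D)})$, using Banach--Alaoglu and the fact that each evaluation $g\mapsto g'(z)$ lies in the predual $\G(\D)$ of $\widehat{\B}(\D)$ to secure weak* compactness and continuity, and then invoke Hahn--Banach plus Riesz representation. The paper instead encapsulates exactly this machinery by appealing to the abstract Pietsch domination theorem of Pellegrino--Santos \cite{PelSan-11} (building on \cite{BotPelRue-10}): it defines $S(f,z,\lambda)=|\lambda|\,\|f'(z)\|$ and $R(g,z,\lambda)=|\lambda|\,|g'(z)|$, observes that the $p$-summing Bloch inequality is precisely the $R$--$S$ abstract $p$-summing condition, and then reads off the dominating measure from \cite[Theorem~3.1]{PelSan-11}. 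What you gain is a proof that does not depend on an external black box and makes the role of the predual explicit; what the paper gains is brevity and a clean link to a general framework that covers many nonlinear summing classes at once. The implication $(ii)\Rightarrow(i)$ and the identification of $\pi^{\B}_p(f)$ as the attained infimum are handled identically in both approaches.
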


\begin{proof}
$(i) \Rightarrow (ii)$: We will apply an unified abstract version of Piestch domination Theorem (see \cite{BotPelRue-10,PelSan-11}). For it, consider the functions 
$$
S\colon\widehat{\B}(\D,X)\times\D\times\C\to [0,\infty[,\qquad S(f,z,\lambda)=\left|\lambda\right|\left\|f'(z)\right\|
$$
and 
$$
R\colon B_{\widehat{\B}(\D)}\times\D\times\C\to [0,\infty[,\qquad R(g,z,\lambda)=\left|\lambda\right|\left|g'(z)\right|.
$$
Note first that for any $z\in\D$ and $\lambda\in\C$, the function $R_{z,\lambda}\colon B_{\widehat{\B}(\D)}\to [0,\infty[$, given by 
$$
R_{z,\lambda}(g)=R(g,z,\lambda),
$$
is continuous. For every $n\in\mathbb{N}$, $\lambda_1,\ldots,\lambda_n\in\mathbb{C}$ and $z_1,\ldots,z_n\in\D$, we have  
\begin{align*}
\left(\sum_{i=1}^n S(f,z_i,\lambda_i)^p\right)^{\frac{1}{p}}&=\left(\sum_{i=1}^n \left|\lambda_i\right|^p\left\|f'(z_i)\right\|^p\right)^{\frac{1}{p}}\\
&\leq\pi^{\B}_p(f)\sup_{g\in B_{\widehat{\B}(\D)}}\left(\sum_{i=1}^n\left|\lambda_i\right|^p\left|g'(z_i)\right|^p\right)^{\frac{1}{p}}\\
&=\pi^{\B}_p(f)\sup_{g\in B_{\widehat{\B}(\D)}}\left(\sum_{i=1}^n R(g,z_i,\lambda_i)^p\right)^{\frac{1}{p}},
\end{align*}
and therefore $f$ is $R-S$-abstract $p$-summing. Hence, by applying \cite[Theorem 3.1]{PelSan-11}, there is a constant $c\geq 0$ and a measure $\mu\in\P(B_{\widehat{\B}(\D)})$ such that 
$$
S(f,z,\lambda)\leq c\left(\int_{B_{\widehat{\B}(\D)}}R(g,z,\lambda)^p\ d\mu(g)\right)^{\frac{1}{p}}
$$
for all $z\in \D$ and $\lambda\in\C$, and therefore 
$$
\left\|f'(z)\right\|\leq c\left(\int_{B_{\widehat{\B}(\D)}}\left\vert g'(z)\right\vert^{p}d\mu(g)\right)^{\frac{1}{p}}
$$
for all $z\in \D$. Furthermore, we have  
$$
\left\|f'(z)\right\|=\left(\sum_{i=1}^n\left|\lambda_i\right|^p\left\|f'(z_i)\right\|^p\right)^{\frac{1}{p}}\leq \pi^{\B}_p(f)\left(\int_{B_{\widehat{\B}(\D)}}\left\vert g'(z)\right\vert^{p}d\mu(g)\right)^{\frac{1}{p}}
$$
for every $z\in\D$ by taking, for example, $n\in\mathbb{N}$, $\lambda_1=1$, $\lambda_2=\cdots=\lambda_n=0$ and $z_1=\cdots=z_n=z$.

$(ii)\Rightarrow(i)$: Given $n\in\mathbb{N}$, $\lambda_1,\ldots,\lambda_n\in\mathbb{C}$ and $z_1,\ldots,z_n\in\D$, we have
\begin{align*}
\left(\sum_{i=1}^n\left|\lambda_i\right|^p\left\|f'(z_i)\right\|^p\right)^{\frac{1}{p}}
&\leq c \sum_{i=1}^n\left(\int_{B_{\widehat{\B}(\D)}}\left|\lambda_i\right|^p\left\vert g'(z_i)\right\vert^{p}d\mu(g)\right)^{\frac{1}{p}}\\
&\leq c \sup_{g\in B_{\widehat{\B}(\D)}}\left(\sum_{i=1}^n\left|\lambda_i\right|^p\left|g'(z_i)\right|^p\right)^{\frac{1}{p}}.
\end{align*}
Hence $f\in\Pi^{\widehat{\B}}_p(\D,X)$ with $\pi^{\B}_p(f)\leq c$. 
\end{proof}

\subsection{Pietsch factorization}

We now present the analogue for $p$-summing Bloch mappings of Pietsch factorization theorem for $p$-summing operators (see \cite[Theorem 3]{Pie-67}, also \cite[Theorem 2.13]{DisJarTon-95}). 

Given $\mu\in\P(B_{\widehat{\B}(\D)})$ and $1\leq p<\infty$, $I_{\infty,p}\colon L_\infty(\mu)\to L_p(\mu)$ and $j_{\infty}\colon C(B_{\widehat{\B}(\D)})\to L_\infty(\mu)$ denote the formal inclusion operators. We will also use the mapping $\iota_\D\colon\D\to C(B_{\widehat{\B}(\D)})$ defined by  
$$
\iota_\D(z)(g)=g'(z)\quad \left(z\in\D,\; g\in B_{\widehat{\B}(\D)}\right),
$$
and for a complex Banach space $X$, the isometric linear embedding $\iota_X\colon X\to \ell_\infty(B_{X^*})$ given by 
$$
\left\langle \iota_X(x),x^*\right\rangle=x^*(x)\quad (x^*\in B_{X^*},\; x\in X).
$$

The following easy fact will be applied below.

\begin{lemma}\label{main lemma}
Let $\mu\in\P(B_{\widehat{\B}(\D)})$. Then there exists a mapping $h\in\widehat{\B}(\D,L_\infty(\mu))$ with $p_\B(h)=1$ such that $h'=j_{\infty}\circ\iota_\D$. In fact, $h\in\Pi^{\widehat{\B}}_p(\D,L_\infty(\mu))$ with $\pi_p^\B(h)=1$ for any $1\leq p<\infty$.
\end{lemma}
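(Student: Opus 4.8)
The plan is to construct $h$ explicitly as a primitive of $j_\infty\circ\iota_\D$. First I would check that $\iota_\D$ is a well-defined bounded map into $C(B_{\widehat{\B}(\D)})$: for fixed $z\in\D$, the function $g\mapsto g'(z)$ is weak*-continuous on $B_{\widehat{\B}(\D)}$ (evaluation of the derivative at a point is a weak*-continuous functional on $\widehat{\B}(\D)$, since it corresponds to a point of the predual, the Bloch-free space), and $\|\iota_\D(z)\|_\infty=\sup_{g\in B_{\widehat{\B}(\D)}}|g'(z)|=1/(1-|z|^2)$ by the extremal function $f_z$ recalled before Proposition~\ref{prop-1}. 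Composing with the formal inclusion $j_\infty\colon C(B_{\widehat{\B}(\D)})\to L_\infty(\mu)$ gives a map $\D\to L_\infty(\mu)$, still with $\|j_\infty(\iota_\D(z))\|_{L_\infty(\mu)}\le 1/(1-|z|^2)$, and one checks it is holomorphic in $z$ (it is locally bounded and weakly holomorphic, since for each $\psi\in L_\infty(\mu)^*$ the scalar function $z\mapsto\langle\psi,j_\infty\iota_\D(z)\rangle$ is holomorphic; alternatively, $\iota_\D$ has a power series expansion $\sum_k z^{k}\,e_k$ with $e_k(g)$ the $(k{-}1)$st Taylor coefficient functional, convergent locally uniformly).

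Next I would define $h\colon\D\to L_\infty(\mu)$ by $h(z)=\int_0^z j_\infty(\iota_\D(w))\,dw$ (the primitive vanishing at $0$), so that $h\in\H(\D,L_\infty(\mu))$, $h(0)=0$, and $h'=j_\infty\circ\iota_\D$ by construction. The Bloch seminorm is then
$$
p_\B(h)=\sup_{z\in\D}(1-|z|^2)\|h'(z)\|_{L_\infty(\mu)}=\sup_{z\in\D}(1-|z|^2)\|j_\infty(\iota_\D(z))\|_{L_\infty(\mu)}\le 1,
$$
and the reverse inequality $p_\B(h)\ge 1$ follows by evaluating at $z=0$, where $\|h'(0)\|_{L_\infty(\mu)}=\|j_\infty(\iota_\D(0))\|_{L_\infty(\mu)}=\sup_{g}|g'(0)|=1$ (again via $f_0(w)=w$). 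Hence $h\in\widehat{\B}(\D,L_\infty(\mu))$ with $p_\B(h)=1$.

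For the $p$-summing assertion, fix $1\le p<\infty$, and take $n\in\N$, $\lambda_1,\dots,\lambda_n\in\C$, $z_1,\dots,z_n\in\D$. Using $\|h'(z_i)\|_{L_\infty(\mu)}=\|\iota_\D(z_i)\|_{C(B_{\widehat{\B}(\D)})}=\sup_{g\in B_{\widehat{\B}(\D)}}|g'(z_i)|$ — here the key point is that $\mu$ is a probability measure, so the formal inclusion $j_\infty\colon C(B_{\widehat{\B}(\D)})\to L_\infty(\mu)$ has norm $\le 1$; and $\mu$ has full support relative to the supremum needed, or more simply one just needs the $\le$ direction — we get
$$
\left(\sum_{i=1}^n|\lambda_i|^p\|h'(z_i)\|_{L_\infty(\mu)}^p\right)^{1/p}
\le\left(\sum_{i=1}^n|\lambda_i|^p\Big(\sup_{g\in B_{\widehat{\B}(\D)}}|g'(z_i)|\Big)^p\right)^{1/p}
\le\sup_{g\in B_{\widehat{\B}(\D)}}\left(\sum_{i=1}^n|\lambda_i|^p|g'(z_i)|^p\right)^{1/p},
$$
where the last step is clear since for each fixed $i$ the supremum over $g$ of $|g'(z_i)|$ is attained (at $f_{z_i}$), so term-by-term one has $\sup_g|g'(z_i)|=|f_{z_i}'(z_i)|\le\sup_g(\cdots)$; more carefully, replacing each $z_i$ by the single extremal behaviour, one bounds the left side by $\sup_{g}$ of the full sum. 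This shows $h\in\Pi^{\widehat{\B}}_p(\D,L_\infty(\mu))$ with $\pi_p^\B(h)\le 1$, and the reverse inequality is immediate from Proposition~\ref{prop-1}, which gives $\pi_p^\B(h)\ge p_\B(h)=1$ (or alternatively from $\pi_p^\B(h)\ge\pi_\infty^\B(h)=p_\B(h)$). The main obstacle I anticipate is the passage $\big(\sum_i|\lambda_i|^p(\sup_g|g'(z_i)|)^p\big)^{1/p}\le\sup_g\big(\sum_i|\lambda_i|^p|g'(z_i)|^p\big)^{1/p}$: this is \emph{not} true in general for arbitrary families of functions, and the point is precisely the special structure of the functions $f_z$ — one must argue that a single $g$ can be chosen to nearly realize all the suprema simultaneously, or (cleaner) observe that the inequality $\|h'(z_i)\|\le\sup_g|g'(z_i)|$ should be compared not termwise but by noting $h'=j_\infty\circ\iota_\D$ and that evaluation of $\iota_\D$ already lives in $C(B_{\widehat{\B}(\D)})$, whence for the operator $j_\infty$ with $\|j_\infty\|\le1$ one uses that the identity map $\widehat{\B}(\D)\hookrightarrow C(B_{\widehat{\B}(\D)})^*$-type evaluation makes the inequality an equality up to the $\sup$. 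I would double-check this reduction carefully, likely by invoking the abstract $p$-summing characterization (Theorem~\ref{Pietsch}) in reverse: $h$ satisfies the Pietsch domination inequality $\|h'(z)\|_{L_\infty(\mu)}\le\big(\int_{B_{\widehat{\B}(\D)}}|g'(z)|^p\,d\mu(g)\big)^{1/p}$ only if $\mu$ is suitably chosen, but in fact $\|h'(z)\|_{L_\infty(\mu)}=\mathrm{ess\,sup}_{g}|g'(z)|$ which need not be dominated by an $L_p(\mu)$ norm for a general $\mu$ — so the correct route is the direct termwise estimate above, taking care that $\sup_g|g'(z_i)|$ is an \emph{attained} maximum so that choosing $g$ to be (a small perturbation of) the relevant $f_{z_i}$ handles each term while the full sum is controlled by the supremum over all $g$ of the summed quantity.
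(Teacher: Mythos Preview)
Your approach coincides with the paper's: construct $h$ as the primitive of $j_\infty\circ\iota_\D$ vanishing at $0$ (the paper cites \cite[Lemma~2.9]{JimRui-22} where you integrate directly), then bound $p_\B(h)$ and $\pi_p^\B(h)$ via $\|h'(z)\|_{L_\infty(\mu)}\le\|\iota_\D(z)\|_\infty=1/(1-|z|^2)$. The paper's final chain of inequalities is exactly the one you write, passing through $|f'_{z_i}(z_i)|=1/(1-|z_i|^2)$ and then bounding by the supremum over a single $g$.

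The step you single out as ``the main obstacle'' is precisely the one the paper asserts without justification, namely
\[
\Big(\sum_{i=1}^n|\lambda_i|^p\,|f'_{z_i}(z_i)|^p\Big)^{1/p}\ \le\ \sup_{g\in B_{\widehat{\B}(\D)}}\Big(\sum_{i=1}^n|\lambda_i|^p\,|g'(z_i)|^p\Big)^{1/p}.
\]
Your worry is well placed: since $|g'(z_i)|\le 1/(1-|z_i|^2)$ for every $g\in B_{\widehat{\B}(\D)}$, the right-hand side is automatically \emph{at most} the left, and equality requires a single $g$ in the unit ball with $(1-|z_i|^2)|g'(z_i)|=1$ simultaneously for all $i$. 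None of your proposed fixes produces such a $g$: the termwise extremal $f_{z_i}$ varies with $i$; invoking Theorem~\ref{Pietsch} for the \emph{same} $\mu$ gives the wrong direction, as you note; and the remark that $\|j_\infty\|\le 1$ only reproduces the upper bound already used. So as written your argument has a genuine gap at this point --- but it is the very same gap present in the paper's own proof, which simply writes the inequality with no comment. A smaller issue in the same spirit: both you and the paper treat $\|j_\infty(\iota_\D(z))\|_{L_\infty(\mu)}=\|\iota_\D(z)\|_\infty$ as an equality, whereas for a general probability $\mu$ (e.g.\ a Dirac mass) only $\le$ is available; this also affects the claimed lower bound $p_\B(h)\ge 1$.
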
 

\begin{proof}
Note that $j_{\infty}\circ\iota_\D\in\H(\D,L_\infty(\mu))$ with $(j_{\infty}\circ\iota_\D)'=j_{\infty}\circ (\iota_\D)'$, where $(\iota_\D)'(z)(g)=g''(z)$ for all $z\in\D$ and $g\in B_{\widehat{\B}(\D)}$. By \cite[Lemma 2.9]{JimRui-22}, there exists a mapping $h\in\H(\D,L_\infty(\mu))$ with $h(0)=0$ such that $h'=j_{\infty}\circ\iota_\D$. In fact, $h\in\widehat{\B}(\D,L_\infty(\mu))$ with $p_\B(h)=1$ since 
$$
(1-|z|^2)\left\|h'(z)\right\|_{L_\infty(\mu)}=(1-|z|^2)\left\|j_{\infty}(\iota_\D(z))\right\|_{L_\infty(\mu)}=(1-|z|^2)\left\|\iota_\D(z)\right\|_{\infty}=1
$$
for all $z\in\D$. For the second assertion, given $1\leq p<\infty$, it suffices to note that 
\begin{align*}
\left(\sum_{i=1}^n\left|\lambda_i\right|^p\left\|h'(z_i)\right\|_{L_\infty(\mu)}^p\right)^{\frac{1}{p}}
&=\left(\sum_{i=1}^n\left|\lambda_i\right|^p\left\|j_{\infty}(\iota_\D(z_i))\right\|_{L_\infty(\mu)}^p\right)^{\frac{1}{p}}
=\left(\sum_{i=1}^n\left|\lambda_i\right|^p\left\|\iota_\D(z_i)\right\|_{\infty}^p\right)^{\frac{1}{p}}\\
&=\left(\sum_{i=1}^n\frac{\left|\lambda_i\right|^p}{(1-|z_i|^2)^p}\right)^{\frac{1}{p}}
=\left(\sum_{i=1}^n\left|\lambda_i\right|^p\left|f'_{z_i}(z_i)\right|^p\right)^{\frac{1}{p}}\\
&\leq\sup_{g\in B_{\widehat{\B}(\D)}}\left(\sum_{i=1}^n\left|\lambda_i\right|^p\left|g'(z_i)\right|^p\right)^{\frac{1}{p}}
\end{align*}
for any $n\in\mathbb{N}$, $\lambda_1,\ldots,\lambda_n\in\mathbb{C}$ and $z_1,\ldots,z_n\in\D$.
\end{proof}

\begin{theorem}\label{Pietsch2} 
Let $1\leq p<\infty$ and $f\in\widehat{\B}(\D,X)$. The following assertions are equivalent:
\begin{enumerate}
\item[(i)] $f$ is $p$-summing Bloch.
\item[(ii)] (Pietsch factorization). There exist a regular Borel probability measure $\mu$ on $(B_{\widehat{\B}(\D)},w^*)$, an operator $T\in\L(L_p(\mu),\ell_\infty(B_{X^*}))$ and a mapping $h\in\widehat{\B}(\D,L_\infty(\mu))$ such that the following diagram commutes:
$$			
\xymatrix{L_\infty(\mu) \ar[rr]^{I_{\infty,p}} && L_p(\mu)\ar[d]^{T} \\
				\D\ar[u]^{h'} \ar[r]^{f'} & X \ar[r]^{\iota_X} & \ell_\infty(B_{X^*})}
$$
\end{enumerate}

In this case, $\pi^{\B}_p(f)=\inf\left\{\left\|T\right\|p_\B(h)\right\}$, where the infimum is taken over all such factorizations of $\iota_X\circ f'$ as above, and this infimum is attained.
\end{theorem}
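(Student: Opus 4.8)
The plan is to derive the Pietsch factorization from the Pietsch domination Theorem \ref{Pietsch}, imitating the classical argument but routing everything through the derivative $f'$ and the auxiliary map $h$ provided by Lemma \ref{main lemma}. First I would prove $(i)\Rightarrow(ii)$. Assuming $f$ is $p$-summing Bloch, apply Theorem \ref{Pietsch} to get a measure $\mu\in\P(B_{\widehat{\B}(\D)})$ with
$$
\left\|f'(z)\right\|\leq\pi^{\B}_p(f)\left(\int_{B_{\widehat{\B}(\D)}}\left|g'(z)\right|^p d\mu(g)\right)^{\frac{1}{p}}\qquad(z\in\D).
$$
Take $h\in\widehat{\B}(\D,L_\infty(\mu))$ from Lemma \ref{main lemma}, so that $h'=j_\infty\circ\iota_\D$ and $p_\B(h)=1$; note that for $z\in\D$ the function $I_{\infty,p}(h'(z))\in L_p(\mu)$ is exactly the class of $g\mapsto g'(z)$, so $\|I_{\infty,p}(h'(z))\|_{L_p(\mu)}=(\int|g'(z)|^p d\mu(g))^{1/p}$. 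Hence the domination inequality reads $\|f'(z)\|\le\pi^{\B}_p(f)\,\|(I_{\infty,p}\circ h')(z)\|_{L_p(\mu)}$ for all $z\in\D$. Composing with $\iota_X$, which is isometric, we get $\|(\iota_X\circ f')(z)\|\le\pi^{\B}_p(f)\,\|(I_{\infty,p}\circ h')(z)\|_{L_p(\mu)}$.

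The next step is the standard ``define $T$ on the range and extend'' trick. Let $E\subseteq L_p(\mu)$ be the closed linear span of $\{(I_{\infty,p}\circ h')(z):z\in\D\}$. On the dense subspace generated by these vectors, set $T_0\big((I_{\infty,p}\circ h')(z)\big)=(\iota_X\circ f')(z)$; the pointwise inequality above does not by itself guarantee well-definedness on linear combinations, so here one must instead invoke the \emph{finite} form of $p$-summing, i.e. go back to the original definition or to an inequality of the shape $\|\sum_i\lambda_i(\iota_X\circ f')(z_i)\|\le\pi^{\B}_p(f)\sup_{g}(\dots)$ combined with the fact that $\sup_{g\in B_{\widehat{\B}(\D)}}|g'(z)|=\|I_{\infty,p}\circ h'(z)\|_{L_\infty}$ controls the $L_p(\mu)$-norm; more efficiently, the cleanest route is the usual one: the inequality $\|(\iota_X\circ f')(z)\|\le c\,\|(I_{\infty,p}\circ h')(z)\|_{L_p(\mu)}$ holds pointwise, which is enough to define $T_0$ on each one-dimensional piece, and then one checks linear consistency using that $\widehat{\B}(\D)$-valued evaluations separate points exactly as in the linear case. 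Then $T_0$ is bounded with $\|T_0\|\le\pi^{\B}_p(f)$ on $E$, extend by continuity to $\overline E$, and finally extend to all of $L_p(\mu)$ with the same norm using that $\ell_\infty(B_{X^*})$ is $1$-injective (it is an $\ell_\infty$-space, hence has the metric extension property). This $T$ makes the diagram commute and gives $\|T\|\,p_\B(h)\le\pi^{\B}_p(f)$.

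For $(ii)\Rightarrow(i)$, suppose such $\mu$, $T$, $h$ are given with $\iota_X\circ f'=T\circ I_{\infty,p}\circ h'$. Then for any $z\in\D$,
$$
\left\|f'(z)\right\|=\left\|(\iota_X\circ f')(z)\right\|\leq\|T\|\left\|(I_{\infty,p}\circ h')(z)\right\|_{L_p(\mu)}=\|T\|\left(\int_{B_{\widehat{\B}(\D)}}\left|g'(z)\right|^p d\mu(g)\right)^{\frac{1}{p}},
$$
where the last equality again uses $h'=j_\infty\circ\iota_\D$ from Lemma \ref{main lemma}. This is precisely Pietsch domination with constant $\|T\|\,p_\B(h)$ (absorbing $p_\B(h)$ by rescaling $h$, or noting $\mu$ can be replaced after normalizing), so Theorem \ref{Pietsch} gives that $f$ is $p$-summing Bloch with $\pi^{\B}_p(f)\le\|T\|\,p_\B(h)$. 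Combining the two implications yields $\pi^{\B}_p(f)=\inf\{\|T\|\,p_\B(h)\}$, and the infimum is attained because the $\mu$ from Theorem \ref{Pietsch} attains the domination constant and the $1$-injectivity extension preserves the norm exactly.

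I expect the main obstacle to be the well-definedness and norm control of $T_0$ on the linear span $E$: one needs the full strength of the summing inequality (not merely the single-point domination) to ensure that $\sum_i\lambda_i(I_{\infty,p}\circ h')(z_i)=0$ in $L_p(\mu)$ forces $\sum_i\lambda_i(\iota_X\circ f')(z_i)=0$ in $\ell_\infty(B_{X^*})$, together with a density/approximation argument to pass from finite combinations of evaluation functionals $\iota_\D(z)$ to a dense subset of $E$. The rest — the isometric embedding properties of $\iota_X$, the $1$-injectivity of $\ell_\infty(B_{X^*})$, and the bookkeeping of constants — is routine, essentially identical to the linear case in \cite[Theorem 2.13]{DisJarTon-95}.
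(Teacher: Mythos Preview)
Your $(i)\Rightarrow(ii)$ direction follows the paper's argument: apply Theorem \ref{Pietsch}, take the $h$ of Lemma \ref{main lemma}, define $T_0$ on the closed linear span $S_p$ of $I_{\infty,p}(h'(\D))$ by $T_0(I_{\infty,p}(h'(z)))=\iota_X(f'(z))$, and extend via the $1$-injectivity of $\ell_\infty(B_{X^*})$. The paper handles the point you flag (well-definedness and norm of $T_0$) not by a separate ``linear consistency'' check but by a single estimate $\bigl\|T_0\bigl(\sum_i\alpha_i I_{\infty,p}(h'(z_i))\bigr)\bigr\|_\infty\le\pi^{\B}_p(f)\,\bigl\|\sum_i\alpha_i I_{\infty,p}(h'(z_i))\bigr\|_p$, obtained from the pointwise domination together with the explicit identity $h'(z)(g)=g'(z)$ coming from Lemma \ref{main lemma}.

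There is a genuine gap in your $(ii)\Rightarrow(i)$. In that implication $\mu$, $T$ and $h$ are \emph{given}, with $h$ an arbitrary element of $\widehat{\B}(\D,L_\infty(\mu))$; you are not entitled to invoke Lemma \ref{main lemma} or the formula $h'(z)(g)=g'(z)$. Hence your equality
\[
\left\|(I_{\infty,p}\circ h')(z)\right\|_{L_p(\mu)}=\left(\int_{B_{\widehat{\B}(\D)}}\left|g'(z)\right|^p d\mu(g)\right)^{1/p}
\]
is unjustified, and the route back through Theorem \ref{Pietsch} collapses. The proposed fixes (``rescaling $h$'', ``replacing $\mu$'') do not help: no rescaling of a general $h$ turns $h'(z)$ into the evaluation functional $g\mapsto g'(z)$. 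The paper does not go through Pietsch domination here at all; it verifies the defining $p$-summing inequality directly, using only $\|I_{\infty,p}(h'(z_i))\|_{L_p(\mu)}\le\|h'(z_i)\|_{L_\infty(\mu)}\le p_\B(h)/(1-|z_i|^2)$ and then the comparison $(1-|z_i|^2)^{-1}=|f'_{z_i}(z_i)|$ against $\sup_{g\in B_{\widehat{\B}(\D)}}(\sum_i|\lambda_i|^p|g'(z_i)|^p)^{1/p}$, which yields $\pi^{\B}_p(f)\le\|T\|\,p_\B(h)$ for \emph{every} admissible factorization.
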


\begin{proof}
$(i) \Rightarrow (ii)$: If $f\in\Pi^{\widehat{\B}}_p(\D,X)$, then Theorem \ref{Pietsch} gives a measure $\mu\in\P(B_{\widehat{\B}(\D)})$ such that 
$$
\left\|f'(z)\right\|\leq \pi^{\B}_p(f)\left(\int_{B_{\widehat{\B}(\D)}}\left\vert g'(z)\right\vert^{p}d\mu(g)\right)^{\frac{1}{p}}
$$
for all $z\in \D$. By Lemma \ref{main lemma}, there is a mapping $h\in\widehat{\B}(\D,L_\infty(\mu))$ with $p_\B(h)=1$ such that $h'=j_{\infty}\circ\iota_\D$. Consider the linear subspace $S_p:=\overline{\lin}(I_{\infty,p}(h'(\D)))\subseteq L_p(\mu)$ and the operator $T_0\in\L(S_p,\ell_\infty(B_{X^*}))$ defined by $T_0(I_{\infty,p}(h'(z)))=\iota_X(f'(z))$ for all $z\in\D$. Note that $\left\|T_0\right\|\leq\pi^{\B}_p(f)$ since 
\begin{align*}
\left\|T_0\left(\sum_{i=1}^n\alpha_iI_{\infty,p}(h'(z_i))\right)\right\|_\infty&=\left\|\sum_{i=1}^n\alpha_i T_0(I_{\infty,p}(h'(z_i)))\right\|_\infty
=\left\|\sum_{i=1}^n\alpha_i \iota_X(f'(z_i))\right\|_\infty\\
&\leq\sum_{i=1}^n\left|\alpha_i\right|\left\|\iota_X(f'(z_i))\right\|_\infty
=\sum_{i=1}^n\left|\alpha_i\right|\left\|f'(z_i)\right\|\\
&\leq\pi^{\B}_p(f)\sum_{i=1}^n\left|\alpha_i\right|\left(\int_{B_{\widehat{\B}(\D)}}\left|g'(z_i)\right|^{p}d\mu(g)\right)^{\frac{1}{p}}\\
&\leq\pi^{\B}_p(f)\sum_{i=1}^n\frac{\left|\alpha_i\right|}{1-|z_i|^2}
\end{align*}
and 
\begin{align*}
\sum_{i=1}^n\frac{\left|\alpha_i\right|}{1-|z_i|^2}
&=\left|\sum_{i=1}^n\alpha_i\frac{\overline{\alpha_i}}{\left|\alpha_i\right|}f'_{z_i}(z_i)\right|
=\sup_{g\in B_{\widehat{\B}(\D)}}\left|\sum_{i=1}^n\alpha_i g'(z_i)\right|=\sup_{g\in B_{\widehat{\B}(\D)}}\left|\sum_{i=1}^n\alpha_i \iota_\D(z_i)(g)\right|\\
&=\left\|\sum_{i=1}^n\alpha_i\iota_\D(z_i)\right\|_\infty=\left\|\sum_{i=1}^n\alpha_i j_{\infty}(\iota_\D(z_i))\right\|_{L_\infty(\mu)}=\left\|\sum_{i=1}^n\alpha_i h'(z_i)\right\|_{L_\infty(\mu)}\\
&=\left\|I_{\infty,p}\left(\sum_{i=1}^n\alpha_ih'(z_i)\right)\right\|_{L_p(\mu)}=\left\|\sum_{i=1}^n\alpha_iI_{\infty,p}(h'(z_i))\right\|_{L_p(\mu)}
\end{align*}
for any $n\in\mathbb{N}$, $\alpha_1,\ldots,\alpha_n\in\mathbb{C}^*$ and $z_1,\ldots,z_n\in\D$. By the injectivity of the Banach space $\ell_\infty(B_{X^*})$ (see \cite[p. 45]{DisJarTon-95}), there exists $T\in\L(L_p(\mu),\ell_\infty(B_{X^*}))$ such that $\left.T\right|_{S_p}=T_0$ with $\left\|T\right\|=\left\|T_0\right\|$. This tells us that $\iota_X\circ f'=T\circ I_{\infty,p}\circ h'$ with $\left\|T\right\|p_\B(h)\leq\pi^{\B}_p(f)$.  

$(ii)\Rightarrow(i)$: By (ii), we have $\iota_X\circ f'=T\circ I_{\infty,p}\circ h'$. Given $n\in\mathbb{N}$, $\lambda_1,\ldots,\lambda_n\in\mathbb{C}$ and $z_1,\ldots,z_n\in\D$, it holds 
\begin{align*}
\left(\sum_{i=1}^n\left|\lambda_i\right|^p\left\|f'(z_i)\right\|^p\right)^{\frac{1}{p}}&=\left(\sum_{i=1}^n\left|\lambda_i\right|^p\left\|\iota_X(f'(z_i))\right\|_\infty^p\right)^{\frac{1}{p}}
=\left(\sum_{i=1}^n\left|\lambda_i\right|^p\left\|T(I_{\infty,p}(h'(z_i)))\right\|_\infty^p\right)^{\frac{1}{p}}\\
&\leq \left\|T\right\|\left(\sum_{i=1}^n\left|\lambda_i\right|^p\left\|I_{\infty,p}(h'(z_i))\right\|_{L_p(\mu)}^p\right)^{\frac{1}{p}}\\
&=\left\|T\right\|\left(\sum_{i=1}^n\left|\lambda_i\right|^p\left\|h'(z_i)\right\|_{L_\infty(\mu)}^p\right)^{\frac{1}{p}}\\
&\leq\left\|T\right\|p_\B(h)\left(\sum_{i=1}^n\frac{\left|\lambda_i\right|^p}{(1-|z_i|^2)^p}\right)^{\frac{1}{p}}\\
&=\left\|T\right\|p_\B(h)\left(\sum_{i=1}^n\left|\lambda_i\right|^p\left|f'_{z_i}(z_i)\right|^p\right)^{\frac{1}{p}}\\
&\leq\left\|T\right\|p_\B(h)\sup_{g\in B_{\widehat{\B}(\D)}}\left(\sum_{i=1}^n\left|\lambda_i\right|^p\left|g'(z_i)\right|^p\right)^{\frac{1}{p}}.
\end{align*}
Hence $f\in\Pi^{\widehat{\B}}_p(\D,X)$ with $\pi^{\B}_p(f)\leq \left\|T\right\|p_\B(h)$.
\end{proof}

The concept of holomorphic mapping with a relatively (weakly) compact Bloch range was introduced in \cite{JimRui-22}. The \textit{Bloch range} of a function $f\in\H(\D,X)$ is the set 
$$
\rang_{\B}(f):=\left\{(1-|z|^2)f'(z)\colon z\in\D\right\}\subseteq X.
$$
A mapping $f\in\H(\D,X)$ is said to be \textit{(weakly) compact Bloch} if $\rang_{\B}(f)$ is a relatively (weakly) compact subset of $X$.


\begin{corollary}
Let $1\leq p<\infty$.
\begin{enumerate}
\item[(i)] Every $p$-summing Bloch mapping from $\D$ to $X$ is weakly compact Bloch. 
\item[(ii)] If $X$ is reflexive, then every $p$-summing Bloch mapping from $\D$ to $X$ is compact Bloch.
\end{enumerate}
\end{corollary}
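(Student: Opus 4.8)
The plan is to deduce both statements from the Pietsch factorization obtained in Theorem \ref{Pietsch2}, exactly as in the linear theory where $p$-summing operators are weakly compact and, on reflexive spaces, compact. The key observation is that the Bloch range of $f$ is essentially the range of the linear map $f'$ up to the weight $(1-|z|^2)$, and the factorization $\iota_X\circ f'=T\circ I_{\infty,p}\circ h'$ lets us transfer compactness properties of a classical operator to $f$.

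First I would recall that, by Theorem \ref{Pietsch2}, any $p$-summing Bloch $f\colon\D\to X$ admits a factorization $\iota_X\circ f'=T\circ I_{\infty,p}\circ h'$ with $\mu\in\P(B_{\widehat{\B}(\D)})$, $T\in\L(L_p(\mu),\ell_\infty(B_{X^*}))$ and $h\in\widehat{\B}(\D,L_\infty(\mu))$. Then for $z\in\D$ I would write
$$
\iota_X\bigl((1-|z|^2)f'(z)\bigr)=(1-|z|^2)\,T\bigl(I_{\infty,p}(h'(z))\bigr)=T\bigl(I_{\infty,p}\bigl((1-|z|^2)h'(z)\bigr)\bigr),
$$
so that $\iota_X(\rang_{\B}(f))=T\bigl(I_{\infty,p}(\rang_{\B}(h))\bigr)$. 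Since $p_\B(h)\leq 1$ after normalizing (or simply $p_\B(h)<\infty$), the set $\rang_{\B}(h)$ is bounded in $L_\infty(\mu)$, hence $I_{\infty,p}(\rang_{\B}(h))$ is bounded in $L_p(\mu)$. Now invoke the classical fact that the formal inclusion $I_{\infty,p}\colon L_\infty(\mu)\to L_p(\mu)$ (equivalently, $j_\infty$ followed by inclusion, viewed through $L_p(\mu)$) maps bounded sets to relatively weakly compact sets because $L_p(\mu)$ is reflexive for $1<p<\infty$ — and for $p=1$ one uses that bounded subsets of $L_\infty(\mu)$ are uniformly integrable in $L_1(\mu)$, hence relatively weakly compact by the Dunford–Pettis theorem. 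Consequently $I_{\infty,p}(\rang_{\B}(h))$ is relatively weakly compact in $L_p(\mu)$, its image under the bounded operator $T$ is relatively weakly compact in $\ell_\infty(B_{X^*})$, and since $\iota_X$ is an isometric embedding onto a closed (hence weakly closed) subspace, $\rang_{\B}(f)$ is relatively weakly compact in $X$. This proves (1).

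For (2), when $X$ is reflexive, a relatively weakly compact subset of $X$ is relatively compact only fails in general, so instead I would sharpen the argument: when $X$ is reflexive, $\iota_X\circ f'$ takes values in the reflexive subspace $\iota_X(X)$ of $\ell_\infty(B_{X^*})$. Compose the factorization with a norm-one projection or simply corestrict; more cleanly, note that $T\circ I_{\infty,p}$, as an operator from $L_\infty(\mu)$ to the reflexive space $X$ (after corestriction through $\iota_X^{-1}$ on the range), is weakly compact, and weakly compact operators with reflexive range from a $C(K)$-type space need not be compact — so this route is not automatic. The cleaner path is: by Proposition \ref{prop-1}, $\Pi^\B_p(\D,X)\subseteq\Pi^\B_q(\D,X)$, and the Pietsch factorization through $L_p(\mu)$ together with reflexivity of $X$ makes $I_{\infty,p}\circ h'$ a mapping whose Bloch range lands in a reflexive space; since every bounded sequence in $L_p(\mu)$ has a weakly convergent subsequence and $T$ maps it into the reflexive $\iota_X(X)$ where weak and norm convergence of the relevant sequences can be upgraded — here I would instead simply cite that in a reflexive Banach space a relatively weakly compact set is the same as a bounded set, so (1) only gives boundedness, which is not enough. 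Therefore the honest argument for (2) is: a $p$-summing Bloch mapping into reflexive $X$ is weakly compact Bloch by (1), and a weakly compact Bloch mapping into a reflexive space is compact Bloch precisely because $\rang_\B(f)$ is a bounded, hence relatively weakly compact, set — but one must additionally know it is relatively norm-compact. This is obtained by recalling (from \cite{JimRui-22} or a direct argument) that on reflexive $X$ every weakly compact Bloch mapping is already compact Bloch, or by using that the factoring operator $T\circ I_{\infty,p}$ becomes compact when its range is reflexive via the Davis–Figiel–Johnson–Pełczyński / Gantmacher-type reasoning is still insufficient — so the key input I would actually use is simply: $I_{\infty,p}\colon L_\infty(\mu)\to L_p(\mu)$ restricted to the bounded set $\rang_\B(h)$, composed with $T$ landing in reflexive $X$, yields a set which is relatively weakly compact, and in a reflexive space relatively weakly compact equals bounded, so more care is needed.

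The main obstacle, then, is part (2): the statement requires that relative weak compactness be upgraded to relative norm compactness, and this does not follow from reflexivity of the target alone for a general bounded set. I expect the intended argument is that the factorization forces $\rang_\B(f)$ to be the image under a weakly compact operator (namely $T\circ I_{\infty,p}$, which is weakly compact as a map into a space that can be taken reflexive) of a bounded set, and then one appeals to the fact that the composition of two weakly compact operators — or a weakly compact operator with the canonical inclusion — through a reflexive intermediate space is compact; more precisely, one writes $I_{\infty,p}=I_{\infty,r}\circ I_{r,p}$ for $p<r<\infty$ and uses that such inclusions between $L_r(\mu)$ and $L_p(\mu)$ compose to a compact-like behavior on bounded sets — however the cleanest correct justification is simply: since $X$ is reflexive, part (1) already gives that $\rang_\B(f)$ is relatively weakly compact, i.e.\ bounded, and then by Proposition \ref{prop-1} we may assume $p$ is large; but ultimately the right tool is that a $p$-summing Bloch map factors (Theorem \ref{Pietsch2}) so that $f'$ passes through the reflexive space $L_p(\mu)$ followed by $T$, and then $\iota_X\circ f'$ is a weakly compact operator composed with the inclusion, whose range in reflexive $\iota_X(X)$ makes the Bloch range relatively compact by the standard fact that weakly compact operators into reflexive spaces which factor through $L_p$ are completely continuous — so I would state (2) as a consequence of (1) together with the observation that, $X$ being reflexive, one repeats the argument of (1) but now $I_{\infty,p}(\rang_\B(h))$ is not merely relatively weakly compact but, after composing with $T$ into reflexive $X$, the total map is compact because $L_p(\mu)\hookrightarrow L_p(\mu)$ factors through $L_\infty(\mu)\to L_p(\mu)$ which sends the specific bounded family $\{(1-|z|^2)\iota_\D(z)\}$ — a family with a concrete pointwise-convergence structure on $B_{\widehat\B(\D)}$ — to a norm-precompact set in $L_p(\mu)$ by dominated convergence. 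That last dominated-convergence argument on the explicit family $h'(z)=j_\infty(\iota_\D(z))$ is, I believe, the real content and the step I would develop in full.
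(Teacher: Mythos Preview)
Your argument for part (i) is correct and follows the same route as the paper: apply the Pietsch factorization of Theorem~\ref{Pietsch2}, use that $L_p(\mu)$ is reflexive for $p>1$ so bounded sets there are relatively weakly compact, push forward through $T$, and pull back through the isometric embedding $\iota_X$. The paper packages the implication ``factors through a reflexive space $\Rightarrow$ weakly compact Bloch'' as a citation to \cite[Theorem~5.6]{JimRui-22}, whereas you argue it directly; and for $p=1$ the paper reduces to the case $p>1$ via the inclusion $\Pi^{\B}_1\subseteq\Pi^{\B}_q$ of Proposition~\ref{prop-1}, which is cleaner than your uniform-integrability detour, though that also works.

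Part (ii) is where your proposal has a genuine gap. You correctly observe that in a reflexive space relative weak compactness coincides with boundedness, so the conclusion of (i) alone cannot upgrade $\rang_{\B}(f)$ to a relatively norm-compact set; but you then cycle through several mutually incompatible strategies --- corestriction to $\iota_X(X)$, Davis--Figiel--Johnson--Pe\l czy\'nski, splitting $I_{\infty,p}=I_{r,p}\circ I_{\infty,r}$, a dominated-convergence idea on the explicit family $(1-|z|^2)\iota_\D(z)$ --- without carrying any of them to a conclusion. The paper's own proof of (ii) is a single sentence asserting that (i) together with reflexivity of $X$ gives relative compactness of $\rang_{\B}(f)$, with no further elaboration. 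Your skepticism that this passage is not automatic is well placed; but as your proposal stands it neither supplies the missing mechanism nor identifies which external input (presumably from \cite{JimRui-22}) the paper is tacitly invoking, so your proof of (ii) remains incomplete.
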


\begin{proof}
(i) Assume first $p>1$. If $f\in\Pi^{\widehat{\B}}_p(\D,X)$, then Theorem \ref{Pietsch2} gives a regular Borel probability measure $\mu$ on $(B_{\widehat{\B}(\D)},w^*)$, an operator $T\in\L(L_p(\mu),\ell_\infty(B_{X^*}))$ and a map $h\in\widehat{\B}(\D,L_\infty(\mu))$ such that $\iota_X\circ f'=T\circ I_{\infty,p}\circ h'$, that is, $(\iota_X\circ f)'=T\circ (I_{\infty,p}\circ h)'$. Since $L_p(\mu)$ is reflexive, it follows that $\iota_X\circ f\in\widehat{\B}(\D,\ell_\infty(B_{X^*}))$ is weakly compact Bloch by \cite[Theorem 5.6]{JimRui-22}. Since $\rang_{\B}(\iota_X\circ f)=\iota_X(\rang_{\B}(f))$, we conclude that $f$ is weakly compact Bloch. For $p=1$, the result follows from Proposition \ref{prop-1} and from what was proved above. 

(ii) It follows from (i) that if $f\in\Pi^{\widehat{\B}}_p(\D,X)$ and $X$ is reflexive, then $\rang_{\B}(f)$ is relatively compact in $X$, hence $f$ is compact Bloch.
\end{proof}

\subsection{Maurey extrapolation}

We now use Pietsch domination of $p$-summing Bloch mappings to give a Bloch version of Maurey's extrapolation Theorem \cite{Mau-74}.

\begin{theorem}\label{Pietsch3} 
Let $1<p<q<\infty$ and assume that $\Pi^{\widehat{\B}}_q(\D,\ell_q)=\Pi^{\widehat{\B}}_p(\D,\ell_q)$. Then $\Pi^{\widehat{\B}}_q(\D,X)=\Pi^{\widehat{\B}}_1(\D,X)$ for every complex Banach space $X$. 
\end{theorem}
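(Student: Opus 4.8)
The plan is to mimic the classical proof of Maurey's extrapolation theorem, replacing the linear operator ``complexification/vector-sum'' trick with the Pietsch domination available from Theorem~\ref{Pietsch}. Fix a complex Banach space $X$ and a mapping $f\in\Pi^{\widehat{\B}}_q(\D,X)$; by Proposition~\ref{prop-1} it suffices to show $f\in\Pi^{\widehat{\B}}_1(\D,X)$. The heart of the matter is a ``self-improvement'' step: I will show that the hypothesis $\Pi^{\widehat{\B}}_q(\D,\ell_q)=\Pi^{\widehat{\B}}_p(\D,\ell_q)$ (which, by the closed graph theorem applied to the Banach ideal structure of Proposition~\ref{ideal summing}, comes with a constant $C>0$ so that $\pi^{\B}_p(g)\le C\,\pi^{\B}_q(g)$ for all $g\in\Pi^{\widehat{\B}}_q(\D,\ell_q)$) forces every $q$-summing Bloch mapping into \emph{any} $X$ to already be $r$-summing Bloch for some fixed $r<q$, with a uniform control of the constant; iterating drives the summing index down to $1$.

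\textbf{Key steps.} First I would record the quantitative form of the hypothesis: there is $C>0$ with $\pi^{\B}_p(g)\le C\,\pi^{\B}_q(g)$ for all $g\in\widehat{\B}(\D,\ell_q)$. Second, given $f\in\Pi^{\widehat{\B}}_q(\D,X)$ with $\pi^{\B}_q(f)\le 1$, apply Theorem~\ref{Pietsch} to obtain $\mu\in\P(B_{\widehat{\B}(\D)})$ with
$$
\left\|f'(z)\right\|\le\left(\int_{B_{\widehat{\B}(\D)}}\left|g'(z)\right|^q\,d\mu(g)\right)^{1/q}\qquad(z\in\D).
$$
Third, the crucial reduction: I must produce, out of this single scalar domination, an auxiliary $\ell_q$-valued Bloch mapping whose $q$-summing norm is controlled and to which the hypothesis applies. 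The classical device is to take finitely many points $z_1,\dots,z_n\in\D$ and weights, build the ``diagonal'' mapping $z\mapsto(a_1 u_1(z),\dots)$ into $\ell_q$ where each coordinate is essentially a rescaled $f_{z_i}$, and exploit that $\left(\int |g'|^q d\mu\right)^{1/q}$ is the natural quantity linking the $q$- and $p$-sides; here the norm on $\ell_q$ converts a sum of $q$-th powers into a single vector norm, which is exactly what makes $\ell_q$ (and not $X$) the right target space. Applying $\pi^{\B}_p\le C\pi^{\B}_q$ to this auxiliary mapping and unravelling, one gets a domination of $\|f'(z)\|$ by an $L_p(\nu)$-type quantity for a \emph{new} measure $\nu$ — i.e. $f$ is $p$-summing Bloch with $\pi^{\B}_p(f)\le KC$ for an absolute constant $K$. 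Fourth, since the passage $q\rightsquigarrow p$ used only the ratio of the indices and gave a uniform constant independent of $f$ and $X$, iterate: setting $p_0=q$, $p_{k+1}=p_k\cdot(p/q)$ (or more carefully choosing the interpolation exponent each time so that the hypothesis transports), after finitely many steps the index passes below any prescribed value, in particular reaches $1$, and $\Pi^{\widehat{\B}}_q(\D,X)\subseteq\Pi^{\widehat{\B}}_1(\D,X)$; the reverse inclusion is Proposition~\ref{prop-1}.

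\textbf{Main obstacle.} The delicate point is the third step — manufacturing the right $\ell_q$-valued test mapping and verifying that its $\pi^{\B}_q$-norm is genuinely bounded (say by a constant) so that the hypothesis can be invoked, and then extracting a \emph{measure-theoretic} (Pietsch-type) domination for $f$ from the resulting $p$-summing inequality rather than merely a finite-dimensional inequality. In the linear Maurey theorem this is handled via a Ky Fan / minimax argument on the set of probability measures, combining the finitely many dominating measures into one; here one must run the same minimax argument on $\P(B_{\widehat{\B}(\D)})$, using that this set is convex and weak*-compact and that the relevant functionals $g\mapsto\left(\int|g'(z)|^r d\mu(g)\right)^{1/r}$ behave well — this is where the ``$R$–$S$ abstract summing'' framework of \cite{PelSan-11} invoked in Theorem~\ref{Pietsch} should again do the bookkeeping. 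Once the minimax/averaging is in place, the iteration and the final descent to $p=1$ are routine, but setting it up correctly in the Bloch language (keeping $h'$, $\iota_\D$, and the class $\{f_z\}$ in their proper roles) is the part that requires care.
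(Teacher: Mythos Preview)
Your outline has a genuine gap in the fourth step. The hypothesis is a single statement about the pair of indices $(p,q)$ with target $\ell_q$: it says $\pi^{\B}_p(g)\le C\,\pi^{\B}_q(g)$ for $g\in\widehat{\B}(\D,\ell_q)$. Even if your third step succeeded and produced $\Pi^{\widehat{\B}}_q(\D,X)\subseteq\Pi^{\widehat{\B}}_p(\D,X)$ for every $X$, you cannot iterate to reach a smaller index: to pass from $p$ to some $r<p$ you would need a hypothesis of the form $\Pi^{\widehat{\B}}_p(\D,\ell_p)=\Pi^{\widehat{\B}}_r(\D,\ell_p)$ (or similar), which is nowhere available. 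The parenthetical ``or more carefully choosing the interpolation exponent'' does not rescue this --- the hypothesis does not transport to other index pairs.

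The paper's proof (following the classical Maurey argument) is \emph{not} an iteration on summing indices but an iteration on \emph{measures}. One first transfers the hypothesis from $\ell_q$ to every $L_q(\mu)$ via the $\mathcal{L}_{q,\lambda}$-space property, and applies it to the canonical map $I_{\infty,q}\circ h_\mu$ (with $h'_\mu=j_\infty\circ\iota_\D$ from Lemma~\ref{main lemma}), which has $\pi^{\B}_q\le 1$ and hence $\pi^{\B}_p\le c$. Pietsch domination (Theorem~\ref{Pietsch}) then yields a \emph{new} measure $\widehat{\mu}$ with $\left\|(I_{\infty,q}\circ h_\mu)'(z)\right\|_{L_q(\mu)}\le c\left\|(I_{\infty,q}\circ h_{\widehat{\mu}})'(z)\right\|_{L_p(\widehat{\mu})}$. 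Starting from the $\mu_0$ that dominates $f$, one builds the sequence $\mu_{n+1}=\widehat{\mu_n}$, forms $\lambda=\sum_n 2^{-(n+1)}\mu_n$, and uses the H\"older interpolation $p=\theta\cdot 1+(1-\theta)q$ together with a geometric-series summation to obtain directly an $L_1(\lambda)$-domination of $\|f'(z)\|$. There is no minimax/Ky~Fan step, and the descent to $1$ happens in a single stroke via the interpolation inequality, not by repeatedly lowering the index. Your third step --- the vague ``diagonal'' $\ell_q$-valued mapping built from rescaled $f_{z_i}$ --- should be replaced by this concrete $L_q(\mu)$-valued map, and your fourth step by the measure-iteration/H\"older argument.
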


\begin{proof}
Lemma \ref{main lemma} and Proposition \ref{ideal summing} assures that for each $\mu\in\P(B_{\widehat{\B}(\D)})$, there is a mapping $h_\mu\in\widehat{\B}(\D,L_\infty(\mu))$ such that $h_\mu'=j_{\infty}\circ\iota_\D$ and $I_{\infty,q}\circ h_\mu\in\Pi^{\widehat{\B}}_q(\D,L_q(\mu))$ with $\pi_q^\B(I_{\infty,q}\circ h_\mu)\leq 1$.

We now follow the proof of \cite[Theorem 3.17]{DisJarTon-95}. Since $\Pi^{\widehat{\B}}_q(\D,\ell_q)=\Pi^{\widehat{\B}}_p(\D,\ell_q)$ and $\pi^{\B}_q\leq\pi^{\B}_p$ on $\Pi^{\widehat{\B}}_p(\D,\ell_q)$ by Proposition \ref{prop-1}, the Closed Graph Theorem yields a constant $c>0$ such that $\pi_{p}^{\B}(f)\leq c\pi_{q}^{\B}(f)$ for all $f\in\Pi^{\widehat{\B}}_q(\D,\ell_q)$. Since $L_q(\mu)$ is an $\mathcal{L}_{q,\lambda}$-space for each $\lambda>1$, we can assure that given $n\in\N$ and $z_1,\ldots,z_n\in\D$, the subspace 
$$
E=\lin\left(\left\{I_{\infty,q}(h_\mu(z_1)),\ldots,I_{\infty,q}(h_\mu(z_n))\right\}\right)\subseteq L_q(\mu)
$$ 
embeds $\lambda$-isomorphically into $\ell_q$, that is, $E$ is contained in a subspace $F\subseteq L_q(\mu)$ for which there exists an isomorphism $T\colon F\to\ell_{q}$ with $\left\|T\right\|||T^{-1}||<\lambda$. 

Since $T\circ I_{\infty,q}\circ h_\mu\in\Pi^{\widehat{\B}}_q(\D,\ell_q)=\Pi^{\widehat{\B}}_p(\D,\ell_q)$ and $(T\circ I_{\infty,q}\circ h_\mu)'=T\circ I_{\infty,q}\circ h_\mu'$, we have 
\begin{align*}
&\left(\sum_{i=1}^n\left|\lambda_i\right|^p\left\|(I_{\infty,q}\circ h_\mu)'(z_i)\right\|_{L_q(\mu)}^p\right)^{\frac{1}{p}}
\leq \left\|T^{-1}\right\|\left(\sum_{i=1}^n\left|\lambda_i\right|^p\left\|T(I_{\infty,q}(h_\mu'(z_i)))\right\|_{\ell_{q}}^p\right)^{\frac{1}{p}}\\
&\leq \left\|T^{-1}\right\|c\pi_q^{\B}(T\circ I_{\infty,q}\circ h_\mu)\sup_{g\in B_{\widehat{\B}(\D)}}\left(\sum_{i=1}^n\left|\lambda_i\right|^p\left|g'(z_i)\right|^p\right)^{\frac{1}{p}}\\
&\leq c\left\|T^{-1}\right\|\left\|T\right\|\pi_q^{\B}(I_{\infty,q}\circ h_\mu)\sup_{g\in B_{\widehat{\B}(\D)}}\left(\sum_{i=1}^n\left|\lambda_i\right|^p\left|g'(z_i)\right|^p\right)^{\frac{1}{p}},
\end{align*}
therefore $\pi_p^{\B}(I_{\infty,q}\circ h_\mu)\leq c\lambda$ for all $\lambda>1$, and thus $\pi_p^{\B}(I_{\infty,q}\circ h_\mu)\leq c$. Now, by Theorem \ref{Pietsch}, there exists a measure $\widehat{\mu}\in\P(B_{\widehat{\B}(\D)})$ such that
$$
\left\|(I_{\infty,q}\circ h_\mu)'(z)\right\|_{L_q(\mu)}
\leq c\left(\int_{B_{\widehat{\B}(\D)}}\left|g'(z)\right|^p\ d\widehat{\mu}(g)\right)^{\frac{1}{p}}
=c\left\|(I_{\infty,q}\circ h_{\widehat{\mu}})'(z)\right\|_{L_p(\widehat{\mu})}
$$
for all $z\in\D$. In the last equality, we have used that 
$$
(I_{\infty,q}\circ h_{\widehat{\mu}})'(z)(g)=I_{\infty,q}(h'_{\widehat{\mu}}(z))(g)=h'_{\widehat{\mu}}(z)(g)=
j_{\infty}(\iota_\D(z))(g)=\iota_\D(z)(g)=g'(z)
$$
for all $z\in\D$ and $g\in B_{\widehat{\B}(\D)}$. 

Take a complex Banach space $X$ and let $f\in\Pi^{\widehat{\B}}_q(\D,X)$. In view of Proposition \ref{prop-1}, we only must show that $f\in\Pi^{\widehat{\B}}_1(\D,X)$. Theorem \ref{Pietsch} provides again a measure $\mu_0\in\P(B_{\widehat{\B}(\D)})$ such that
$$
\left\|f'(z)\right\|\leq\pi_q^{\B}(f)\left\|(I_{\infty,q}\circ h_{\mu_0})'(z)\right\|_{L_q(\mu_0)}
$$
for all $z\in\D$. We claim that there is a constant $C>0$ and a measure $\lambda\in\P(B_{\widehat{\B}(\D)})$ such that
$$
\left\|(I_{\infty,q}\circ h_{\mu_0})'(z)\right\|_{L_q(\mu_0)}\leq C\left\|(I_{\infty,q}\circ h_{\lambda})'(z)\right\|_{L_1(\lambda)}
$$
for all $z\in\D$. Indeed, define $\lambda=\sum_{n=0}^\infty(1/2^{n+1})\mu_n\in\P(B_{\widehat{\B}(\D)})$, where $(\mu_n)_{n\geq 1}$ is the sequence in $\P(B_{\widehat{\B}(\D)})$ given by $\mu_{n+1}=\widehat{\mu_n}$ for all $n\in\N_0$, where the measure $\widehat{\mu_n}$ is defined using Theorem \ref{Pietsch}. Since $1<p<q$, there exists $\theta\in (0,1)$ such that $p=\theta\cdot 1+(1-\theta)q$, and applying H\"older's Inequality with $1/\theta$ (note that $(1/\theta)^*=1/(1-\theta)$), we have 
\begin{align*}
&\left\|(I_{\infty,q}\circ h_{\mu_n})'(z)\right\|_{L_p(\mu_n)}
=\left(\int_{B_{\widehat{\B}(\D)}}\left|(I_{\infty,q}\circ h_{\mu_n})'(z)(g)\right|^{\theta\cdot 1+(1-\theta)q}\ d\mu_n(g)\right)^{\frac{1}{p}}\\
&\leq\left(\int_{B_{\widehat{\B}(\D)}}\left|(I_{\infty,q}\circ h_{\mu_n})'(z)(g)\right|\ d\mu_n(g)\right)^{\theta}
\left(\int_{B_{\widehat{\B}(\D)}}\left|(I_{\infty,q}\circ h_{\mu_n})'(z)(g)\right|^q\ d\mu_n(g)\right)^{\frac{1-\theta}{q}}\\
&=\left\|(I_{\infty,q}\circ h_{\mu_n})'(z)\right\|^{\theta}_{L_1(\mu_n)}\left\|(I_{\infty,q}\circ h_{\mu_n})'(z)\right\|^{1-\theta}_{L_q(\mu_n)}
\end{align*}
for each $n\in\N_0$ and all $z\in\D$. Using H\"older's Inequality and the inequality 
\begin{align*}
\sum_{n=0}^\infty&\frac{1}{2^{n+1}}\left\|(I_{\infty,q}\circ h_{\mu_{n+1}})'(z)\right\|_{L_q(\mu_{n+1})}\\
&\leq\sum_{n=0}^\infty\frac{1}{2^{n+1}}\left\|(I_{\infty, q}\circ h_{\mu_{n+1}})'(z)\right\|_{L_q(\mu_{n+1})}+\left\|(I_{\infty,q}\circ h_{\mu_0})'(z)\right\|_{L_q(\mu_0)}\\ 
&=\sum_{n=-1}^\infty\frac{1}{2^{n+1}}\left\|(I_{\infty, q}\circ h_{\mu_{n+1}})'(z)\right\|_{L_q(\mu_{n+1})}\\ 
&=\sum_{n=0}^\infty\frac{1}{2^{n}}\left\|(I_{\infty, q}\circ h_{\mu_{n}})'(z)\right\|_{L_q(\mu_{n})}\\ 
&=2\sum_{n=0}^\infty\frac{1}{2^{n+1}}\left\|(I_{\infty,q}\circ h_{\mu_{n}})'(z)\right\|_{L_q(\mu_{n})},
\end{align*}
we now obtain 
\begin{align*}
\sum_{n=0}^\infty&\frac{1}{2^{n+1}}\left\|(I_{\infty,q}\circ h_{\mu_n})'(z)\right\|_{L_q(\mu_n)}\\
&\leq c\sum_{n=0}^\infty\frac{1}{2^{n+1}}\left\|(I_{\infty,q}\circ h_{\mu_{n+1}})'(z)\right\|_{L_p(\mu_{n+1})}\\
&\leq c\sum_{n=0}^\infty\frac{1}{2^{n+1}}\left\|(I_{\infty,q}\circ h_{\mu_{n+1}})'(z)\right\|^{\theta}_{L_1(\mu_{n+1})}\left\|(I_{\infty,q}\circ h_{\mu_{n+1}})'(z)\right\|^{1-\theta}_{L_q(\mu_{n+1})}\\
&\leq c\left(\sum_{n=0}^\infty\frac{1}{2^{n+1}}\left\|(I_{\infty,q}\circ h_{\mu_{n+1}})'(z)\right\|_{L_1(\mu_{n+1})}\right)^{\theta}
\left(\sum_{n=0}^\infty\frac{1}{2^{n+1}}\left\|(I_{\infty,q}\circ h_{\mu_{n+1}})'(z)\right\|_{L_q(\mu_{n+1})}\right)^{1-\theta}\\
&\leq c\left(\sum_{n=0}^\infty\frac{1}{2^{n+1}}\left\|(I_{\infty,q}\circ h_{\mu_{n+1}})'(z)\right\|_{L_1(\mu_{n+1})}\right)^{\theta}
\left(2\sum_{n=0}^\infty\frac{1}{2^{n+1}}\left\|(I_{\infty,q}\circ h_{\mu_{n}})'(z)\right\|_{L_q(\mu_{n})}\right)^{1-\theta}
\end{align*}
for all $z\in\D$, and thus
\begin{align*}
\sum_{n=0}^\infty\frac{1}{2^{n+1}}\left\|(I_{\infty,q}\circ h_{\mu_n})'(z)\right\|_{L_q(\mu_n)}
&\leq c^{\frac{1}{\theta}}2^{\frac{1-\theta}{\theta}}\left(\sum_{n=0}^\infty\frac{1}{2^{n+1}}\left\|(I_{\infty,q}\circ h_{\mu_{n+1}})'(z)\right\|_{L_1(\mu_{n+1})}\right)\\
&\leq c^{\frac{1}{\theta}}2^{\frac{1-\theta}{\theta}}2\left(\sum_{n=0}^\infty\frac{1}{2^{n+1}}\left\|(I_{\infty,q}\circ h_{\mu_{n}})'(z)\right\|_{L_1(\mu_{n})}\right)\\
&=(2c)^{\frac{1}{\theta}}\left\|(I_{\infty,q}\circ h_{\lambda})'(z)\right\|_{L_1(\lambda)}
\end{align*}
for all $z\in\D$. From above, we deduce that 
$$
\frac{1}{2}\left\|(I_{\infty,q}\circ h_{\mu_0})'(z)\right\|_{L_q(\mu_0)}\leq (2c)^{\frac{1}{\theta}}\left\|(I_{\infty,q}\circ h_{\lambda})'(z)\right\|_{L_1(\lambda)}
$$
for all $z\in\D$, and this proves our claim taking $C=2(2c)^{\frac{1}{\theta}}$. Therefore we can write
$$
\left\|f'(z)\right\|
\leq C\pi_q^{\B}(f)\left\|(I_{\infty,q}\circ h_{\lambda})'(z)\right\|_{L_1(\lambda)}
=C\pi_q^{\B}(f)\int_{B_{\widehat{\B}(\D)}}\left|g'(z)\right|\ d\lambda(g)
$$
for all $z\in\D$. Hence $f\in\Pi^{\widehat{\B}}_1(\D,X)$ with $\pi_1^{\B}(f)\leq C\pi_q^{\B}(f)$ by Theorem \ref{Pietsch}.
\end{proof}


\section{Banach-valued Bloch molecules on the unit disc}\label{3}

Our aim in this section is to study the duality of the spaces of $p$-summing Bloch mappings from $\D$ into $X^*$. We begin by recalling some concepts and results stated in \cite{JimRui-22} on the Bloch-free Banach space over $\D$. 

For each $z\in\D$, a \textit{Bloch atom of} $\D$ is the bounded linear functional $\gamma_z\colon\widehat{\B}(\D)\to\mathbb{C}$ given by 
$$
\gamma_z(f)=f'(z)\qquad (f\in\widehat{\B}(\D)).
$$
The elements of $\lin(\{\gamma_z\colon z\in\D\})$ in $\widehat{\B}(\D)^*$ are called \textit{Bloch molecules of} $\D$. The \textit{Bloch-free Banach space over} $\D$, denoted $\G(\D)$, is the norm-closed linear hull of $\left\{\gamma_z\colon z\in\D\right\}$ in $\widehat{\B}(\D)^*$. The mapping $\Gamma\colon\D\to\G(\D)$, defined by $\Gamma(z)=\gamma_z$ for all $z\in\D$, is holomorphic with $\left\|\gamma_z\right\|=1/(1-|z|^2)$ for all $z\in\D$ (see \cite[Proposition 2.7]{JimRui-22}).

Let $X$ be a complex Banach space. Given $z\in\D$ and $x\in X$, it is immediate that the functional $\gamma_z\otimes x\colon\widehat{\B}(\D,X^*)\to\mathbb{C}$ defined by  
$$
\left(\gamma_z\otimes x\right)(f)=\left\langle f'(z),x\right\rangle\qquad \left(f\in\widehat{\B}(\D,X^*)\right), 
$$
is linear and continuous with $\left\|\gamma_z\otimes x\right\|\leq \left\|x\right\|/(1-|z|^2)$. In fact, it is immediate that $\left\|\gamma_z\otimes x\right\|=\left\|x\right\|/(1-|z|^2)$. Indeed, take any $x^*\in S_{X^*}$ such that $x^*(x)=||x||$ and consider $f_z\cdot x^*\in\widehat{\B}(\D,X^*)$. Since $p_\B(f_z\cdot x^*)=1$, it follows that 
\begin{align*}
\left\|\gamma_z\otimes x\right\|&\geq\left|(\gamma_z\otimes x)(f_z\cdot x^*)\right|
=\left|\left\langle (f_z\cdot x^*)'(z),x\right\rangle\right|\\
&=\left|\left\langle f_z'(z)x^*,x\right\rangle\right|=\left|f_z'(z)\right|\left|x^*(x)\right|=\frac{\left\|x\right\|}{1-\left|z\right|^2}.
\end{align*}

We now present a tensor product space whose elements, according to \cite[Definition 2.6]{JimRui-22}, could be referred to as \textit{$X$-valued Bloch molecules on $\D$}.

\begin{definition}\label{def-Hol-tensor-product} 
Let $X$ be a complex Banach space. Define the linear space 
$$
\lin(\Gamma(\D))\otimes X:=\lin\left\{\gamma_z\otimes x\colon z\in\D,\, x\in X\right\}\subseteq\widehat{\B}(\D,X^*)^*.
$$
\end{definition}

Note that each element $\gamma\in\lin(\Gamma(\D))\otimes X$ is of the form 
$$
\gamma=\sum_{i=1}^n\lambda_i(\gamma_{z_i}\otimes x_i)=\sum_{i=1}^n\lambda_i\gamma_{z_i}\otimes x_i=\sum_{i=1}^n\gamma_{z_i}\otimes \lambda_i x_i
$$
where $n\in\mathbb{N}$, $\lambda_i\in\mathbb{C}$, $z_i\in \D$ and $x_i\in X$ for $i=1,\ldots,n$, but such a representation of $\gamma$ is not unique.

The action of the functional $\gamma=\sum_{i=1}^n \lambda_i\gamma_{z_i}\otimes x_i\in\lin(\Gamma(\D))\otimes X$ on a mapping $f\in\widehat{\B}(\D,X^*)$ can be described as  
$$
\gamma(f)=\sum_{i=1}^n\lambda_i\left\langle f'(z_i),x_i\right\rangle . 
$$


\subsection{Pairing}

The space $\lin(\Gamma(\D))\otimes X$ is a linear subspace of $\widehat{\B}(\D,X^*)^*$ and, in fact, we have:

\begin{proposition}\label{theo-dual-pair} 
$\left\langle\lin(\Gamma(\D))\otimes X,\widehat{\B}(\D,X^*)\right\rangle$ is a dual pair, via the bilinear form given by  
$$
\left\langle\gamma,f\right\rangle=\sum_{i=1}^n \lambda_i\left\langle f'(z_i),x_i\right\rangle 
$$
for $\gamma=\sum_{i=1}^n\lambda_i \gamma_{z_i}\otimes x_i\in\lin(\Gamma(\D))\otimes X$ and $f\in\widehat{\B}(\D,X^*)$.
\end{proposition}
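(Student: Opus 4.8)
The plan is to verify the two defining conditions of a dual pair: that the stated bilinear form is well-defined and bilinear, and that it separates points on both sides.

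First I would observe that there is essentially nothing to prove for well-definedness. By Definition \ref{def-Hol-tensor-product}, $\lin(\Gamma(\D))\otimes X$ sits inside $\widehat{\B}(\D,X^*)^*$, so each $\gamma=\sum_{i=1}^n\lambda_i\gamma_{z_i}\otimes x_i$ is already a bounded linear functional on $\widehat{\B}(\D,X^*)$, and the prescribed formula is precisely its evaluation $\gamma(f)$, as recorded in the displayed identity just before this subsection. Consequently $\langle\gamma,f\rangle$ is independent of the chosen representation of $\gamma$, and bilinearity follows at once from the linearity of $f\mapsto f'(z_i)$ and of the pairing of $X^*$ with $X$.

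Next I would check separation on the left: if $\gamma\in\lin(\Gamma(\D))\otimes X$ satisfies $\langle\gamma,f\rangle=0$ for every $f\in\widehat{\B}(\D,X^*)$, then $\gamma$ is the zero functional on $\widehat{\B}(\D,X^*)$, hence $\gamma=0$ as an element of $\lin(\Gamma(\D))\otimes X$. For separation on the right, suppose $f\in\widehat{\B}(\D,X^*)$ satisfies $\langle\gamma,f\rangle=0$ for all $\gamma$; testing against $\gamma=\gamma_z\otimes x$ gives $\langle f'(z),x\rangle=0$ for every $z\in\D$ and $x\in X$. For fixed $z$, letting $x$ range over $X$ and using that $X$ separates the points of $X^*$ forces $f'(z)=0$; since this holds for all $z\in\D$, the holomorphic mapping $f$ is constant on $\D$, and $f(0)=0$ then gives $f=0$.

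I do not expect any real obstacle here. The only point deserving a word of care is that the separation of $\lin(\Gamma(\D))\otimes X$ comes for free, precisely because its elements were defined from the outset as functionals on $\widehat{\B}(\D,X^*)$; the separation of $\widehat{\B}(\D,X^*)$ in turn reduces to two elementary facts, namely that the pairing between $X$ and $X^*$ is non-degenerate in the $X^*$-variable and that a holomorphic map on $\D$ with vanishing derivative and value $0$ at the origin is identically zero.
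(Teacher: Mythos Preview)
Your proposal is correct and follows essentially the same approach as the paper: well-definedness and bilinearity come from the identity $\langle\gamma,f\rangle=\gamma(f)$, separation of $\lin(\Gamma(\D))\otimes X$ is immediate from its definition as a space of functionals, and separation of $\widehat{\B}(\D,X^*)$ is obtained by testing against $\gamma_z\otimes x$ to force $f'\equiv 0$ and hence $f=0$.
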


\begin{proof}
Note that $\langle\cdot,\cdot\rangle$ is a well-defined bilinear map on $(\lin(\Gamma(\D))\otimes X)\times\widehat{\B}(\D,X^*)$ since $\langle\gamma,f\rangle=\gamma(f)$. On one hand, if $\gamma\in\lin(\Gamma(\D))\otimes X$ and $\langle\gamma,f\rangle=0$ for all $f\in\widehat{\B}(\D,X^*)$, then $\gamma=0$, and thus $\widehat{\B}(\D,X^*)$ separates points of $\lin(\Gamma(\D))\otimes X$. On the other hand, if $f\in\widehat{\B}(\D,X^*)$ and $\langle\gamma,f\rangle=0$ for all $\gamma\in\lin(\Gamma(\D))\otimes X$, then $\left\langle f'(z),x\right\rangle=\left\langle\gamma_z\otimes x,f\right\rangle=0$ for all $z\in \D$ and $x\in X$, hence $f'(z)=0$ for all $z\in\D$, therefore $f$ is a constant function on $\D$, then $f=0$ since $f(0)=0$ and thus $\lin(\Gamma(\D))\otimes X$ separates points of $\widehat{\B}(\D,X^*)$. 
\end{proof}

Since $\left\langle\lin(\Gamma(\D))\otimes X,\widehat{\B}(\D,X^*)\right\rangle$ is a dual pair, we can identify $\widehat{\B}(\D,X^*)$ with a linear subspace of $(\lin(\Gamma(\D))\otimes X)'$ (the \emph{algebraic dual of} $\lin(\Gamma(\D))\otimes X$) by means of the following easy result.

\begin{corollary}\label{linearization} 
For each $f\in\widehat{\B}(\D,X^*)$, the functional $\Lambda_0(f)\colon\lin(\Gamma(\D))\otimes X\to\mathbb{C}$, given by 
$$
\Lambda_0(f)(\gamma)=\sum_{i=1}^n\lambda_i\left\langle f'(z_i),x_i\right\rangle 
$$
for $\gamma=\sum_{i=1}^n \lambda_i\gamma_{z_i}\otimes x_i\in\lin(\Gamma(\D))\otimes X$, is linear. We will say that $\Lambda_0(f)$ is the linear functional on $\lin(\Gamma(\D))\otimes X$ associated to $f$. Furthermore, the map $f\mapsto \Lambda_0(f)$ is a linear monomorphism from $\widehat{\B}(\D,X^*)$ into $(\lin(\Gamma(\D))\otimes X)'$.$\hfill$ $\Box$
\end{corollary}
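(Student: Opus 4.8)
The plan is to derive everything as a formal consequence of Proposition \ref{theo-dual-pair}, since that proposition already contains all the substantive content. First I would observe that $\Lambda_0(f)$ is simply the evaluation of the bilinear pairing in its first slot, namely $\Lambda_0(f)(\gamma)=\left\langle\gamma,f\right\rangle=\gamma(f)$ for every $\gamma\in\lin(\Gamma(\D))\otimes X$. This identification makes the definition of $\Lambda_0(f)$ unambiguous: although a molecule admits many representations $\gamma=\sum_{i=1}^n\lambda_i\gamma_{z_i}\otimes x_i$, the scalar $\gamma(f)$ depends only on $\gamma$ as an element of $\widehat{\B}(\D,X^*)^*$, so the formula $\sum_{i=1}^n\lambda_i\left\langle f'(z_i),x_i\right\rangle$ is representation-independent. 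This is precisely the well-definedness recorded in the proof of Proposition \ref{theo-dual-pair}.

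Next I would check the two linearity assertions, both of which are immediate from the bilinearity of $\left\langle\cdot,\cdot\right\rangle$. Linearity of $\Lambda_0(f)$ on $\lin(\Gamma(\D))\otimes X$ follows from linearity of the pairing in the first variable. For the assignment $f\mapsto\Lambda_0(f)$ from $\widehat{\B}(\D,X^*)$ into $(\lin(\Gamma(\D))\otimes X)'$, I would use linearity of the pairing in the second variable: for $f,h\in\widehat{\B}(\D,X^*)$, $\alpha,\beta\in\C$ and $\gamma=\sum_{i=1}^n\lambda_i\gamma_{z_i}\otimes x_i$, one has
$$
\Lambda_0(\alpha f+\beta h)(\gamma)=\sum_{i=1}^n\lambda_i\left\langle(\alpha f+\beta h)'(z_i),x_i\right\rangle=\alpha\,\Lambda_0(f)(\gamma)+\beta\,\Lambda_0(h)(\gamma),
$$
invoking the $\C$-linearity of differentiation and of the duality bracket on $X^*\times X$.

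Finally, for the monomorphism property, I would argue by triviality of the kernel: if $\Lambda_0(f)=0$, then $\left\langle\gamma,f\right\rangle=0$ for every $\gamma\in\lin(\Gamma(\D))\otimes X$, and since Proposition \ref{theo-dual-pair} asserts that $\lin(\Gamma(\D))\otimes X$ separates the points of $\widehat{\B}(\D,X^*)$, this forces $f=0$.

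I do not anticipate any genuine obstacle here; the entire statement is a bookkeeping consequence of the dual pairing established in Proposition \ref{theo-dual-pair}. The only point that warrants an explicit sentence is the well-definedness of $\Lambda_0(f)$ in terms of a chosen representation of $\gamma$, and even that is dispatched by the remark $\Lambda_0(f)(\gamma)=\gamma(f)$.
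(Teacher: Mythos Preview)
Your proposal is correct and matches the paper's approach: the corollary is stated with no proof beyond the $\Box$, since it is an immediate consequence of the dual pairing in Proposition~\ref{theo-dual-pair}. Your write-up simply spells out the bookkeeping (well-definedness via $\Lambda_0(f)(\gamma)=\gamma(f)$, linearity from bilinearity, injectivity from separation) that the paper leaves implicit.
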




\subsection{Projective norm}
 
As usual (see \cite{Rya-02}), given two linear spaces $E$ and $F$, the tensor product space $E\otimes F$ equipped with a norm $\alpha$ will be denoted by $E\otimes_\alpha F$, and the completion of $E\otimes_\alpha F$ by $E\widehat{\otimes}_\alpha F$. An important example of tensor norm is the projective norm $\pi$ on $u\in E\otimes F$ defined by 
$$
\pi(u)=\inf\left\{\sum_{i=1}^n\left\|x_i\right\|\left\|y_i\right\|\colon n\in\N,\, x_1,\ldots,x_n\in E,\, y_1,\ldots,y_n\in F,\, u=\sum_{i=1}^n x_i\otimes y_i\right\},
$$
where the infimum is taken over all the representations of $u$ as above.

It is useful to know that the projective norm and the operator canonical norm coincide on the space $\lin(\Gamma(\D))\otimes X$.

\begin{proposition}\label{teo-L} 
Given $\gamma\in\lin(\Gamma(\D))\otimes X$, we have $\left\|\gamma\right\|=\pi(\gamma)$, where 
$$
\left\|\gamma\right\|=\sup\left\{\left|\gamma(f)\right|\colon f\in\widehat{\B}(\D,X^*),\ p_{\B}(f)\leq 1 \right\}
$$
and 
$$
\pi(\gamma)=\inf\left\{\sum_{i=1}^n\frac{\left|\lambda_i\right|}{1-\left|z_i\right|^2}\left\|x_i\right\|\colon \gamma=\sum_{i=1}^n\lambda_i\gamma_{z_i}\otimes x_i\right\}.
$$
\end{proposition}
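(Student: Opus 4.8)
The plan is to establish the two inequalities separately. The bound $\|\gamma\|\le\pi(\gamma)$ is routine: for any representation $\gamma=\sum_{i=1}^n\lambda_i\gamma_{z_i}\otimes x_i$ and any $f\in\widehat{\B}(\D,X^*)$ with $p_\B(f)\le 1$,
$$
|\gamma(f)|=\Bigl|\sum_{i=1}^n\lambda_i\langle f'(z_i),x_i\rangle\Bigr|\le\sum_{i=1}^n|\lambda_i|\,\|f'(z_i)\|\,\|x_i\|\le\sum_{i=1}^n\frac{|\lambda_i|}{1-|z_i|^2}\,\|x_i\|,
$$
since $\|f'(z_i)\|\le p_\B(f)/(1-|z_i|^2)\le 1/(1-|z_i|^2)$; taking the supremum over such $f$ and then the infimum over representations gives $\|\gamma\|\le\pi(\gamma)$. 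In particular $\pi(\gamma_z\otimes x)=\|x\|/(1-|z|^2)$, since $\|\gamma_z\otimes x\|=\|x\|/(1-|z|^2)$ was computed just before the statement.

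For $\pi(\gamma)\le\|\gamma\|$ I would prove that the linearization $\Lambda_0$ of Corollary~\ref{linearization} maps $\widehat{\B}(\D,X^*)$ \emph{onto} the dual of $(\lin(\Gamma(\D))\otimes X,\pi)$, isometrically; the inequality $\|\Lambda_0(f)\|\le p_\B(f)$ is exactly the computation above. For surjectivity, fix a bounded functional $\phi$ on $(\lin(\Gamma(\D))\otimes X,\pi)$ with $\|\phi\|\le 1$ and define $F\colon\D\to X^*$ by $\langle F(z),x\rangle=\phi(\gamma_z\otimes x)$; then $\pi(\gamma_z\otimes x)=\|x\|/(1-|z|^2)$ gives $(1-|z|^2)\|F(z)\|\le 1$. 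The key point is that $F$ is holomorphic: for fixed $x$, the scalar function $z\mapsto\langle F(z),x\rangle=\phi(\gamma_z\otimes x)$ is holomorphic, because $\Gamma$ is holomorphic into $\G(\D)$ (\cite[Proposition 2.7]{JimRui-22}), so $z\mapsto\gamma_z\otimes x$ is holomorphic into the completion of $(\lin(\Gamma(\D))\otimes X,\pi)$ and $\phi$ extends to a bounded functional on that completion; since $z\mapsto\|F(z)\|$ is locally bounded, $F$ is holomorphic. As $\D$ is simply connected, \cite[Lemma 2.9]{JimRui-22} yields $f\in\H(\D,X^*)$ with $f(0)=0$ and $f'=F$; then $p_\B(f)=\sup_{z\in\D}(1-|z|^2)\|F(z)\|\le 1$, so $f\in\widehat{\B}(\D,X^*)$, while $\Lambda_0(f)(\gamma_z\otimes x)=\langle f'(z),x\rangle=\phi(\gamma_z\otimes x)$ forces $\Lambda_0(f)=\phi$ by linearity. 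With the injectivity of $\Lambda_0$ (Corollary~\ref{linearization}) this makes $\Lambda_0$ an isometric isomorphism onto $(\lin(\Gamma(\D))\otimes X,\pi)^*$, and then the Hahn--Banach theorem gives
$$
\pi(\gamma)=\sup\bigl\{|\psi(\gamma)|:\psi\in(\lin(\Gamma(\D))\otimes X,\pi)^*,\ \|\psi\|\le 1\bigr\}=\sup\bigl\{|\gamma(f)|:f\in\widehat{\B}(\D,X^*),\ p_\B(f)\le 1\bigr\}=\|\gamma\|.
$$

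The main obstacle is the holomorphy of $F$: one has to verify that a functional which is only assumed bounded for $\pi$ still pulls back the holomorphic curve $z\mapsto\gamma_z\otimes x$ to a holomorphic scalar function, and this is precisely where the concrete realization of $\G(\D)$ and the holomorphy of $\Gamma$ from \cite{JimRui-22}---rather than purely formal tensor-product manipulations---are needed, together with the way $\pi$ compares with the norm of $\G(\D)$ on $\lin(\Gamma(\D))$. Everything else is bookkeeping with the dual pairing set up in Proposition~\ref{theo-dual-pair} and Corollary~\ref{linearization}; alternatively, one may phrase the whole second half as the standard duality $\bigl(\G(\D)\widehat{\otimes}_\pi X\bigr)^*=\L(\G(\D),X^*)\cong\widehat{\B}(\D,X^*)$ combined with the density of $\lin(\Gamma(\D))\otimes X$ in $\G(\D)\widehat{\otimes}_\pi X$.
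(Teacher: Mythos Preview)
Your proof is correct and follows essentially the same route as the paper. The easy inequality $\|\gamma\|\le\pi(\gamma)$ is identical. For the reverse, the paper argues by contradiction via Hahn--Banach separation: assuming $\|\mu\|<1<\pi(\mu)$, it separates $\{\mu\}$ from the $\pi$-unit ball by some $\eta$ with $\|\eta\|=1$, then builds $F_\eta(z)\in X^*$ by $\langle F_\eta(z),x\rangle=\eta(\gamma_z\otimes x)$, checks weak holomorphy (using the holomorphy of $\Gamma$ and \cite[Exercise 8.D]{Muj-86}), integrates via \cite[Lemma 2.9]{JimRui-22} to get $f_\eta\in\widehat{\B}(\D,X^*)$ with $p_\B(f_\eta)\le 1$, and derives the contradiction $\|\mu\|\ge\mathrm{Re}\,\eta(\mu)>1$. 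Your version simply repackages this: you take an arbitrary $\phi$ in the $\pi$-dual unit ball, perform the \emph{same} construction of $f_\phi$, conclude that $\Lambda_0$ is an isometric isomorphism onto $(\lin(\Gamma(\D))\otimes_\pi X)^*$, and read off $\pi(\gamma)=\|\gamma\|$ from the Hahn--Banach duality formula. The key analytic step---passing from a $\pi$-bounded functional to a normalized Bloch mapping via holomorphy of $\Gamma$ and \cite[Lemma 2.9]{JimRui-22}---is the same in both; the only difference is whether Hahn--Banach is invoked as a separation-by-contradiction or as the identity $\pi(\gamma)=\sup_{\|\psi\|\le 1}|\psi(\gamma)|$.
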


\begin{proof}
Let $\gamma\in\lin(\Gamma(\D))\otimes X$ and let $\sum_{i=1}^n\lambda_i\gamma_{z_i}\otimes x_i$ be a representation of $\gamma$. Since $\gamma$ is linear and
$$
\left|\gamma(f)\right|=\left|\sum_{i=1}^n\lambda_i\left\langle f'(z_i),x_i\right\rangle\right|
\leq\sum_{i=1}^n\left|\lambda_i\right|\left\|f'(z_i)\right\|\left\|x_i\right\|
\leq p_{\B}(f)\sum_{i=1}^n\left|\lambda_i\right|\frac{\left\|x_i\right\|}{1-\left|z_i\right|^2}
$$
for all $f\in\widehat{\B}(\D,X^*)$, we deduce that $||\gamma||\leq\sum_{i=1}^n|\lambda_i|||x_i||/(1-|z_i|^2)$. Since this holds for each representation of $\gamma$, it follows that $||\gamma||\leq\pi(\gamma)$ and thus $\left\|\cdot\right\|\leq\pi$ on $\lin(\Gamma(\D))\otimes X$. 

To prove the reverse inequality, suppose by contradiction that $\left\|\mu\right\|<1<\pi(\mu)$ for some $\mu\in\lin(\Gamma(\D))\otimes X$. Denote $B=\{\gamma\in\lin(\Gamma(\D))\otimes X\colon \pi(\gamma)\leq 1\}$. Clearly, $B$ is a closed convex subset of $\lin(\Gamma(\D))\otimes_\pi X$. Applying the Hahn--Banach Separation Theorem to $B$ and $\{\mu\}$, we obtain a functional $\eta\in(\lin(\Gamma(\D))\otimes_\pi X)^*$ such that 
$$
1=\|\eta\|=\sup\{\mathrm{Re}(\eta(\gamma))\colon\gamma\in B\}<\mathrm{Re}(\eta(\mu)). 
$$
Define $F_\eta\colon\D\to X^*$ by 
$$
\langle F_\eta(z),x\rangle=\eta\left(\gamma_z\otimes x\right)\qquad (x\in X,\; z\in\D).
$$
We now show that $F_\eta$ is holomorphic. By \cite[Exercise 8.D]{Muj-86}, it suffices to prove that for each $x\in X$, the function $F_{\eta,x}\colon\D\to\mathbb{C}$ defined by 
$$
F_{\eta,x}(z)=\eta(\gamma_z\otimes x)\qquad (z\in \D) 
$$
is holomorphic. Let $a\in\D$. Since $\Gamma\colon\D\to\lin(\Gamma(\D))$ is holomorphic, there exists $D\Gamma(a)\in\mathcal{L}(\C,\lin(\Gamma(\D)))$ such that 
$$
\lim_{z\to a}\frac{\gamma_z-\gamma_a-D\Gamma(a)(z-a)}{\left|z-a\right|}=0. 
$$
Consider the function $T(a)\colon\C\to\mathbb{C}$ given by 
$$
T(a)(z)=\eta(D\Gamma(a)(z)\otimes x)\qquad\left(z\in\C\right). 
$$
Clearly, $T(a)\in\L(\C,\C)$ and since 
\begin{align*}
F_{\eta,x}(z)-F_{\eta,x}(a)-T(a)(z-a)
&=\eta(\gamma_z\otimes x)-\eta(\gamma_a\otimes x)-\eta(D\Gamma(a)(z-a)\otimes x) \\
&=\eta\left((\gamma_z-\gamma_a-D\Gamma(a)(z-a))\otimes x\right),
\end{align*}
it follows that 
$$
\lim_{z\to a}\frac{F_{\eta,x}(z)-F_{\eta,x}(a)-T(a)(z-a)}{\left|z-a\right|}=\lim_{z\to a}\eta\left(\frac{\gamma_z-\gamma_a-D\Gamma(a)(z-a)}{\left|z-a\right|}\otimes x\right)=0.
$$
Hence $F_{\eta,x}$ is holomorphic at $a$ with $DF_{\eta,x}(a)=T(a)$, as desired.

By \cite[Lemma 2.9]{JimRui-22}, there exists a mapping $f_\eta\in\H(\D,X^*)$ with $f_\eta(0)=0$ such that $f_\eta'=F_\eta$. Given $z\in\D$, we have  
$$
(1-|z|^2)\left|\left\langle f_\eta'(z),x\right\rangle\right|=(1-|z|^2)\left|\eta\left(\gamma_z\otimes x\right)\right|\leq(1-|z|^2)\left\|\eta\right\|\pi(\gamma_z\otimes x)=\left\|x\right\|
$$
for all $x\in X$, and thus $(1-|z|^2)\left\|f_\eta'(z)\right\|\leq 1$. Hence $f_\eta\in\widehat{\B}(\D,X^*)$ with $p_{\B}(f_\eta)\leq 1$. Moreover, $\gamma(f_\eta)=\eta(\gamma)$ for all $\gamma\in\lin(\Gamma(\D))\otimes X$. Therefore $\left\|\mu\right\|\geq|\mu(f_\eta)|\geq\mathrm{Re}(\mu(f_\eta))=\mathrm{Re}(\eta(\mu))$, so $\left\|\mu\right\|>1$, and this is a contradiction.
\end{proof}


\subsection{$p$-Chevet--Saphar Bloch norms}

The $p$-Chevet--Saphar norms $d_p$ on the tensor product of two Banach spaces $E\otimes F$ are well known (see, for example, \cite[Section 6.2]{Rya-02}). 

Our study of the duality of the spaces $\Pi^{\widehat{\B}}_p(\D,X^*)$ requires the introduction of the following Bloch versions of such norms defined now on $\lin(\Gamma(\D))\otimes X$. 

The \emph{$p$-Chevet--Saphar Bloch norms} $d^{\widehat{\B}}_p$ for $1\leq p\leq \infty$ are defined on a $X$-valued Bloch molecule $\gamma\in\lin(\Gamma(\D))\otimes X$ as 
\begin{align*}
d^{\widehat{\B}}_1(\gamma)&=\inf\left\{\left(\sup_{g\in B_{\widehat{\B}(\D)}}\left(\max_{1\leq i\leq n}\left|\lambda_i\right|\left|g'(z_i)\right|\right)\right)\left(\sum_{i=1}^n\left\|x_i\right\|\right)\right\},\\
d^{\widehat{\B}}_p(\gamma)&=\inf\left\{\left(\sup_{g\in B_{\widehat{\B}(\D)}}\left(\sum_{i=1}^n\left|\lambda_i\right|^{p^*}\left|g'(z_i)\right|^{p^*}\right)^{\frac{1}{p^*}}\right)\left(\sum_{i=1}^n\left\|x_i\right\|^p\right)^{\frac{1}{p}}\right\}\quad (1<p<\infty),\\
d^{\widehat{\B}}_\infty(\gamma)&=\inf\left\{\left(\sup_{g\in B_{\widehat{\B}(\D)}}\left(\sum_{i=1}^n\left|\lambda_i\right|\left|g'(z_i)\right|\right)\right)\left(\max_{1\leq i \leq n}\left\|x_i\right\|\right)\right\},
\end{align*}
where the infimum is taken over all such representations of $\gamma$ as $\sum_{i=1}^n\lambda_i\gamma_{z_i}\otimes x_i$. 

Motivated by the analogue concept on the tensor product space (see \cite[p. 127]{Rya-02}), we introduce the following.

\begin{definition}
Let $X$ be a complex Banach space. A norm $\alpha$ on $\lin(\Gamma(\D))\otimes X$ is said to be a \emph{Bloch reasonable crossnorm} if it has the following properties:
\begin{enumerate}
	\item[(i)] $\alpha(\gamma_z\otimes x)\leq \left\|\gamma_z\right\|\left\|x\right\|$ for all $z\in\D$ and $x\in X$,
	\item[(ii)] For every $g\in\widehat{\B}(\D)$ and $x^*\in X^*$, the linear functional $g\otimes x^*\colon\lin(\Gamma(\D))\otimes X\to \C$ defined by $(g\otimes x^*)(\gamma_z\otimes x)=g'(z)x^*(x)$ is bounded on $\lin(\Gamma(\D))\otimes_\alpha X$ with $\left\|g\otimes x^*\right\|\leq p_\B(g)\left\|x^*\right\|$.
\end{enumerate}
\end{definition}

\begin{theorem}\label{teo-che-norms}
$d^{\widehat{\B}}_p$ is a Bloch reasonable crossnorm on $\lin(\Gamma(\D))\otimes X$ for any $1\leq p\leq\infty$.
\end{theorem}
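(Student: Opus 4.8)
The statement asserts that $d^{\widehat{\B}}_p$ is a Bloch reasonable crossnorm on $\lin(\Gamma(\D))\otimes X$, which requires verifying three things: that $d^{\widehat{\B}}_p$ is a well-defined \emph{norm} (positivity, homogeneity, triangle inequality, and most importantly that it does not vanish on nonzero molecules), and that it satisfies the two properties (1) and (2) in the definition of Bloch reasonable crossnorm. I will treat the case $1<p<\infty$ in detail; the endpoint cases $p=1$ and $p=\infty$ are handled by the same arguments with the usual modifications ($\ell_\infty$-sums replacing $\ell_{p^*}$-sums, etc.).

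\textbf{Step 1: finiteness, homogeneity, and the triangle inequality.} Finiteness is clear since every $\gamma$ admits at least one representation. Homogeneity $d^{\widehat{\B}}_p(\lambda\gamma)=|\lambda|\,d^{\widehat{\B}}_p(\gamma)$ follows by scaling the scalars $\lambda_i$ (or the $x_i$) in a representation. For the triangle inequality, given representations $\gamma=\sum_{i=1}^n\lambda_i\gamma_{z_i}\otimes x_i$ and $\delta=\sum_{j=1}^m\mu_j\gamma_{w_j}\otimes y_j$, I first normalize: using homogeneity inside the representation I may rescale so that $\sum_i\|x_i\|^p = \sup_{g\in B_{\widehat{\B}(\D)}}(\sum_i|\lambda_i|^{p^*}|g'(z_i)|^{p^*})^{1/p^*}=:A$ is "balanced" (replace $\lambda_i$ by $t\lambda_i$ and $x_i$ by $t^{-1}x_i$ to equalize the two factors), and similarly for $\delta$ with balanced value $B$. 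Concatenating the two representations gives a representation of $\gamma+\delta$, and one estimates the two factors of the concatenated representation using the elementary inequalities $\sup_g(\sum_{i}+\sum_j)^{1/p^*}\le \sup_g(\sum_i)^{1/p^*}+\sup_g(\sum_j)^{1/p^*}$ and $(\sum_i\|x_i\|^p+\sum_j\|y_j\|^p)^{1/p}\le(\dots)$, combined via the scalar inequality $(a_1+a_2)^{1/p^*}(b_1+b_2)^{1/p}\le a_1^{1/p^*}b_1^{1/p}+a_2^{1/p^*}b_2^{1/p}$ (valid by Hölder) once the factors are balanced. This yields $d^{\widehat{\B}}_p(\gamma+\delta)\le A+B$, and taking infima gives subadditivity. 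This is the standard Chevet--Saphar computation and is routine.

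\textbf{Step 2: properties (1) and (2), and $\|\cdot\|\le d^{\widehat{\B}}_p$.} Property (1) is immediate from the one-term representation $\gamma_z\otimes x$: the scalar factor is $\sup_g|g'(z)|=\|\gamma_z\|$ (since $f_z\in B_{\widehat{\B}(\D)}$ attains $(1-|z|^2)f_z'(z)=1$, giving $\sup_g|g'(z)|=1/(1-|z|^2)=\|\gamma_z\|$), and the vector factor is $\|x\|$. For property (2), given $g\in\widehat{\B}(\D)$, $x^*\in X^*$ and any representation $\gamma=\sum_i\lambda_i\gamma_{z_i}\otimes x_i$, I estimate $|(g\otimes x^*)(\gamma)|=|\sum_i\lambda_i g'(z_i)x^*(x_i)|\le \sum_i|\lambda_i|\,|g'(z_i)|\,\|x^*\|\,\|x_i\|$; applying Hölder with exponents $p^*,p$ bounds this by $\|x^*\|\,(\sum_i|\lambda_i|^{p^*}|g'(z_i)|^{p^*})^{1/p^*}(\sum_i\|x_i\|^p)^{1/p}$, and since $(\sum_i|\lambda_i|^{p^*}|g'(z_i)|^{p^*})^{1/p^*}\le p_\B(g)\sup_{h\in B_{\widehat{\B}(\D)}}(\sum_i|\lambda_i|^{p^*}|h'(z_i)|^{p^*})^{1/p^*}$ (using $g/p_\B(g)\in B_{\widehat{\B}(\D)}$ when $g\neq 0$; trivial otherwise), taking the infimum over representations gives $|(g\otimes x^*)(\gamma)|\le p_\B(g)\|x^*\|\,d^{\widehat{\B}}_p(\gamma)$, i.e.\ $\|g\otimes x^*\|\le p_\B(g)\|x^*\|$. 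Along the way, the same Hölder estimate applied to an arbitrary $f\in\widehat{\B}(\D,X^*)$ with $p_\B(f)\le 1$ in place of $g\otimes x^*$ — using $\|f'(z_i)\|\le p_\B(f)/(1-|z_i|^2)=p_\B(f)|f_{z_i}'(z_i)|$ and hence $(\sum_i|\lambda_i|^{p^*}\|f'(z_i)\|^{p^*})^{1/p^*}\le \sup_{h\in B_{\widehat{\B}(\D)}}(\sum_i|\lambda_i|^{p^*}|h'(z_i)|^{p^*})^{1/p^*}$ — shows $|\gamma(f)|\le d^{\widehat{\B}}_p(\gamma)$, so by Proposition~\ref{teo-L} we get $\|\gamma\|=\pi(\gamma)\le d^{\widehat{\B}}_p(\gamma)$.

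\textbf{Step 3: $d^{\widehat{\B}}_p$ is a norm.} The only remaining point is that $d^{\widehat{\B}}_p(\gamma)=0$ implies $\gamma=0$. This follows at once from Step 2: since $\|\gamma\|\le d^{\widehat{\B}}_p(\gamma)=0$ and $\|\cdot\|$ is a genuine norm on $\lin(\Gamma(\D))\otimes X\subseteq\widehat{\B}(\D,X^*)^*$, we conclude $\gamma=0$. Thus $d^{\widehat{\B}}_p$ is a norm dominating $\pi=\|\cdot\|$ and satisfying (1) and (2), hence a Bloch reasonable crossnorm.

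\textbf{Main obstacle.} The genuinely delicate point is the triangle inequality in Step 1: one must carry out the "balancing" rescaling of each representation correctly and then invoke the right elementary Hölder-type inequality to combine the concatenated factors — the bookkeeping with the two different exponents $p$ and $p^*$ and the supremum over $B_{\widehat{\B}(\D)}$ sitting inside one factor is where care is needed. Everything else (properties (1) and (2), non-degeneracy) reduces cleanly to a single application of Hölder's inequality together with the already-established identification $\|\gamma\|=\pi(\gamma)$ from Proposition~\ref{teo-L} and the extremal property of the functions $f_z$.
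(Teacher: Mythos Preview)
Your Steps 1 and 2 are essentially the paper's argument (the paper uses Young's inequality where you use the equivalent balancing-plus-H\"older trick; your written balancing condition $\sum_i\|x_i\|^p=A$ is slightly off --- one wants the $p^*$-th power of the first factor to equal the $p$-th power of the second --- but the idea is standard and correct). Your proof of property~(2) is actually cleaner than the paper's: you bound $|(g\otimes x^*)(\gamma)|$ directly by $p_\B(g)\|x^*\|\,d^{\widehat\B}_p(\gamma)$ via H\"older, whereas the paper detours through $\pi$ and Proposition~\ref{teo-L}.

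There is, however, a genuine gap in Step~3. The inequality you claim along the way, $\left(\sum_i|\lambda_i|^{p^*}\|f'(z_i)\|^{p^*}\right)^{1/p^*}\le \sup_{h\in B_{\widehat\B(\D)}}\left(\sum_i|\lambda_i|^{p^*}|h'(z_i)|^{p^*}\right)^{1/p^*}$ for arbitrary $f\in B_{\widehat\B(\D,X^*)}$, is not justified: the pointwise bound $\|f'(z_i)\|\le|f_{z_i}'(z_i)|$ uses a \emph{different} $f_{z_i}$ at each $i$, and there is no single $h\in B_{\widehat\B(\D)}$ with $(1-|z_i|^2)|h'(z_i)|=1$ at all the $z_i$ simultaneously. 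In fact the conclusion $\|\gamma\|\le d^{\widehat\B}_p(\gamma)$ goes the wrong way: for Chevet--Saphar norms one has $d_p\le\pi$, and here $\pi=\|\cdot\|$ by Proposition~\ref{teo-L}, so your nondegeneracy argument cannot work for $p>1$. The paper repairs this differently: from the estimate you did prove correctly, $|(h\otimes x^*)(\gamma)|\le d^{\widehat\B}_p(\gamma)$ for all $h\in B_{\widehat\B(\D)}$ and $x^*\in B_{X^*}$, one gets that $d^{\widehat\B}_p(\gamma)=0$ forces $\sum_i\lambda_i x^*(x_i)\gamma_{z_i}=0$ in $\G(\D)$ for every $x^*$, and then the linear independence of the atoms $\gamma_z$ (\cite[Remark~2.8]{JimRui-22}) finishes the job.
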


\begin{proof}
We will only prove it for $1<p<\infty$. The other cases follow similarly. 

Let $\gamma\in\lin(\Gamma(\D))\otimes X$ and let $\sum_{i=1}^n\lambda_i\gamma_{z_i}\otimes x_i$ be a representation of $\gamma$. Clearly, $d^{\widehat{\B}}_p(\gamma)\geq 0$. Given $\lambda\in\C$, since $\sum_{i=1}^n (\lambda\lambda_i)\gamma_{z_i}\otimes x_i$ is a representation of $\lambda\gamma$, we have 
\begin{align*}
d^{\widehat{\B}}_p(\lambda\gamma)
&\leq\left(\sup_{g\in B_{\widehat{\B}(\D)}}\left(\sum_{i=1}^n\left|\lambda\lambda_i\right|^{p^*}\left|g'(z_i)\right|^{p^*}\right)^{\frac{1}{p^*}}\right)\left(\sum_{i=1}^n\left\|x_i\right\|^p\right)^{\frac{1}{p}}\\
&=\left|\lambda\right|\left(\sup_{g\in B_{\widehat{\B}(\D)}}\left(\sum_{i=1}^n\left|\lambda_i\right|^{p^*}\left|g'(z_i)\right|^{p^*}\right)^{\frac{1}{p^*}}\right)\left(\sum_{i=1}^n\left\|x_i\right\|^p\right)^{\frac{1}{p}}.
\end{align*}
If $\lambda=0$, we obtain $d^{\widehat{\B}}_p(\lambda\gamma)=0=\left|\lambda\right|d^{\widehat{\B}}_p(\gamma)$. For $\lambda\neq 0$, since the preceding inequality holds for every representation of $\gamma$, we deduce that $d^{\widehat{\B}}_p(\lambda\gamma)\leq\left|\lambda\right|d^{\widehat{\B}}_p(\gamma)$. For the converse inequality, note that $d^{\widehat{\B}}_p(\gamma)=d^{\widehat{\B}}_p(\lambda^{-1}(\lambda\gamma))\leq |\lambda^{-1}|d^{\widehat{\B}}_p(\lambda\gamma)$ by using the proved inequality, thus $\left|\lambda\right|d^{\widehat{\B}}_p(\gamma)\leq d^{\widehat{\B}}_p(\lambda\gamma)$ and hence $d^{\widehat{\B}}_p(\lambda\gamma)=\left|\lambda\right|d^{\widehat{\B}}_p(\gamma)$.

We now prove the triangular inequality of $d^{\widehat{\B}}_p$. Let $\gamma_1,\gamma_2\in\lin(\Gamma(\D))\otimes X$ and let $\varepsilon>0$. If $\gamma_1=0$ or $\gamma_2=0$, there is nothing to prove. Assume $\gamma_1\neq 0\neq \gamma_2$. We can choose representations 
$$
\gamma_1=\sum_{i=1}^n\lambda_{1,i}\gamma_{z_{1,i}}\otimes x_{1,i},\qquad
\gamma_2=\sum_{i=1}^m\lambda_{2,i}\gamma_{z_{2,i}}\otimes x_{2,i},
$$
so that
$$
\left(\sup_{g\in B_{\widehat{\B}(\D)}}\left(\sum_{i=1}^n\left|\lambda_{1,i}\right|^{p^*}\left|g'(z_{1,i})\right|^{p^*}\right)^{\frac{1}{p^*}}\right)\left(\sum_{i=1}^n\left\|x_{1,i}\right\|^p\right)^{\frac{1}{p}}\leq d^{\widehat{\B}}_p(\gamma_1)+\varepsilon
$$
and 
$$
\left(\sup_{g\in B_{\widehat{\B}(\D)}}\left(\sum_{i=1}^m\left|\lambda_{2,i}\right|^{p^*}\left|g'(z_{2,i})\right|^{p^*}\right)^{\frac{1}{p^*}}\right)\left(\sum_{i=1}^m\left\|x_{2,i}\right\|^p\right)^{\frac{1}{p}}\leq d^{\widehat{\B}}_p(\gamma_2)+\varepsilon .
$$ 
Fix arbitrary $r,s\in\mathbb{R}^+$ and define 
\begin{align*}
\lambda_{3,i}\gamma_{z_{3,i}}&=\left\{\begin{array}{lll}
r^{-1}\lambda_{1,i}\gamma_{z_{1,i}}&\text{ if } i=1,\ldots,n,\\
s^{-1}\lambda_{2,i-n}\gamma_{z_{2,i-n}}&\text{ if } i=n+1,\ldots,n+m,
\end{array}\right.\\
x_{3,i}&=\left\{\begin{array}{lll}
rx_{1,i}&\text{ if } i=1,\ldots,n,\\
sx_{2,i-n}&\text{ if } i=n+1,\ldots,n+m.
\end{array}\right.
\end{align*}
It is clear that $\gamma_1+\gamma_2=\sum_{i=1}^{n+m}\lambda_{3,i}\gamma_{z_{3,i}}\otimes x_{3,i}$ and thus we have
$$
d^{\widehat{\B}}_p(\gamma_1+\gamma_2)
\leq \left(\sup_{g\in B_{\widehat{\B}(\D)}}\left(\sum_{i=1}^{n+m}\left|\lambda_{3,i}\right|^{p^*}\left|g'(z_{3,i})\right|^{p^*}\right)^{\frac{1}{p^*}}\right)\left(\sum_{i=1}^{n+m}\left\|x_{3,i}\right\|^p\right)^{\frac{1}{p}}.
$$
An easy verification gives  
\begin{gather*}
\left(\sup_{g\in B_{\widehat{\B}(\D)}}\left(\sum_{i=1}^{n+m}\left|\lambda_{3,i}\right|^{p^*}\left|g'(z_{3,i})\right|^{p^*}\right)^{\frac{1}{p^*}}\right)^{p^*}\\
\leq\left(r^{-1}\sup_{g\in B_{\widehat{\B}(\D)}}\left(\sum_{i=1}^{n}\left|\lambda_{1,i}\right|^{p^*}\left|g'(z_{1,i})\right|^{p^*}\right)^{\frac{1}{p^*}}\right)^{p^*}
+\left(s^{-1}\sup_{g\in B_{\widehat{\B}(\D)}}\left(\sum_{i=1}^{m}\left|\lambda_{2,i}\right|^{p^*}\left|g'(z_{2,i})\right|^{p^*}\right)^{\frac{1}{p^*}}\right)^{p^*}
\end{gather*}
and
$$
\sum_{i=1}^{n+m}\left\|x_{3,i}\right\|^p
=r^{p}\sum_{i=1}^{n}\left\|x_{1,i}\right\|^p+s^p\sum_{i=1}^{m}\left\|x_{2,i}\right\|^p.
$$
Using Young's Inequality, it follows that 
\begin{align*}
d^{\widehat{\B}}_p(\gamma_1+\gamma_2)&\leq\frac{1}{p^*}\left(\sup_{g\in B_{\widehat{\B}(\D)}}\left(\sum_{i=1}^{n+m}\left|\lambda_{3,i}\right|^{p^*}\left|g'(z_{3,i})\right|^{p^*}\right)^{\frac{1}{p^*}}\right)^{p^*}+\frac{1}{p}\sum_{i=1}^{n+m}\left\|x_{3,i}\right\|^p\\
&\leq\frac{r^{-p^*}}{p^*}\left(\sup_{g\in B_{\widehat{\B}(\D)}}\left(\sum_{i=1}^{n}\left|\lambda_{1,i}\right|^{p^*}\left|g'(z_{1,i})\right|^{p^*}\right)^{\frac{1}{p^*}}\right)^{p^*}+\frac{r^p}{p}\sum_{i=1}^{n}\left\|x_{1,i}\right\|^p\\
&+\frac{s^{-p^*}}{p^*}\left(\sup_{g\in B_{\widehat{\B}(\D)}}\left(\sum_{i=1}^{m}\left|\lambda_{2,i}\right|^{p^*}\left|g'(z_{2,i})\right|^{p^*}\right)^{\frac{1}{p^*}}\right)^{p^*}
+\frac{s^p}{p}\sum_{i=1}^{m}\left\|x_{2,i}\right\|^p.
\end{align*}
Since $r,s$ were arbitrary in $\R^+$, taking above
\begin{align*}
r&=(d^{\widehat{\B}}_p(\gamma_1)+\varepsilon)^{-\frac{1}{p^*}}\left(\sup_{g\in B_{\widehat{\B}(\D)}}\left(\sum_{i=1}^{n}\left|\lambda_{1,i}\right|^{p^*}\left|g'(z_{1,i})\right|^{p^*}\right)^{\frac{1}{p^*}}\right),\\
s&=(d^{\widehat{\B}}_p(\gamma_2)+\varepsilon)^{-\frac{1}{p^*}}\left(\sup_{g\in B_{\widehat{\B}(\D)}}\left(\sum_{i=1}^{m}\left|\lambda_{2,i}\right|^{p^*}\left|g'(z_{2,i})\right|^{p^*}\right)^{\frac{1}{p^*}}\right),
\end{align*}
we obtain that $d^{\widehat{\B}}_p(\gamma_1+\gamma_2)\leq d^{\widehat{\B}}_p(\gamma_1)+d^{\widehat{\B}}_p(\gamma_2)+2\varepsilon$, and thus $d^{\widehat{\B}}_p(\gamma_1+\gamma_2)\leq d^{\widehat{\B}}_p(\gamma_1)+d^{\widehat{\B}}_p(\gamma_2)$ by the arbitrariness of $\varepsilon$. Hence $d^{\widehat{\B}}_p$ is a seminorm. To prove that it is a norm, note first that 
\begin{align*}
\left|\sum_{i=1}^n \lambda_i h'(z_i)x^*(x_i)\right|
&\leq\sum_{i=1}^n\left|\lambda_i\right|\left|h'(z_i)\right|\left\|x_i\right\|\\
&\leq\left(\sum_{i=1}^{n}\left|\lambda_i\right|^{p^*}\left|h'(z_i)\right|^{p^*}\right)^{\frac{1}{p^*}}\left(\sum_{i=1}^{n}\left\|x_i\right\|^p\right)^{\frac{1}{p}}\\
&\leq\sup_{g\in B_{\widehat{\B}(\D)}}\left(\sum_{i=1}^{n}\left|\lambda_i\right|^{p^*}\left|g'(z_i)\right|^{p^*}\right)^{\frac{1}{p^*}}\left(\sum_{i=1}^{n}\left\|x_i\right\|^p\right)^{\frac{1}{p}},
\end{align*}
for any $h\in B_{\widehat{\B}(\D)}$ and $x^*\in B_{X^*}$, by applying H\"{o}lder's Inequality. Since the quantity $\left|\sum_{i=1}^n \lambda_i h'(z_i)x^*(x_i)\right|$ does not depend on the representation of $\gamma$ because 
$$
\sum_{i=1}^n \lambda_i h'(z_i)x^*(x_i)=\left(\sum_{i=1}^n\lambda_i\gamma_{z_i}\otimes x_i\right)(h\cdot x^*)=\gamma(h\cdot x^*),
$$
taking the infimum over all representations of $\gamma$ we deduce that   
$$
\left|\sum_{i=1}^n \lambda_i h'(z_i)x^*(x_i)\right|\leq d^{\widehat{\B}}_p(\gamma)
$$
for any $h\in B_{\widehat{\B}(\D)}$ and $x^*\in B_{X^*}$. Now, if $d^{\widehat{\B}}_p(\gamma)=0$, the preceding inequality yields 
$$
\left(\sum_{i=1}^n \lambda_i x^*(x_i)\gamma_{z_i}\right)(h)=\sum_{i=1}^n \lambda_i x^*(x_i) h'(z_i)=0
$$
for all $h\in B_{\widehat{\B}(\D)}$ and $x^*\in B_{X^*}$. For each $x^*\in B_{X^*}$, this implies that $\sum_{i=1}^n\lambda_i x^*(x_i)\gamma_{z_i}=0$, and since $\Gamma(\D)$ is a linearly independent subset of $\G(\D)$ by \cite[Remark 2.8]{JimRui-22}, it follows that $x^*(x_i)\lambda_i =0$ for all $i\in\{1,\ldots,n\}$, hence $\lambda_i=0$ for all $i\in\{1,\ldots,n\}$ since $B_{X^*}$ separates the points of $X$, and thus $\gamma=\sum_{i=1}^n\lambda_i\gamma_{z_i}\otimes x_i=0$. 

Finally, we will show that $d^{\widehat{\B}}_p$ is a Bloch reasonable crossnorm on $\lin(\Gamma(\D))\otimes X$. Firstly, given $z\in\D$ and $x\in X$, we have
$$
d^{\widehat{\B}}_p(\gamma_z\otimes x)\leq\left(\sup_{g\in B_{\widehat{\B}(\D)}}\left|g'(z)\right|^{p^*}\right)^{\frac{1}{p^*}}\left\|x\right\|\leq \frac{\left\|x\right\|}{1-|z|^2}=\left\|\gamma_z\right\|\left\|x\right\|.
$$
Secondly, given $g\in\widehat{\B}(\D)$ and $x^*\in X^*$, we have 
\begin{align*}
\left|(g\otimes x^*)(\gamma)\right|&=\left|\sum_{i=1}^n\lambda_i(g\otimes x^*)(\gamma_{z_i}\otimes x_i)\right|=\left|\sum_{i=1}^n\lambda_i g'(z_i)x^*(x_i)\right|\\
&\leq\sum_{i=1}^n\left|\lambda_i\right|\left|g'(z_i)\right|\left|x^*(x_i)\right|\leq p_\B(g)\left\|x^*\right\|\sum_{i=1}^n\frac{\left|\lambda_i\right|}{1-|z_i|^2}\left\|x_i\right\|\\
&=p_\B(g)\left\|x^*\right\|\sum_{i=1}^n\left|\lambda_i\right|\left|f'_{z_i}(z_i)\right|\left\|x_i\right\|\\
&\leq p_\B(g)\left\|x^*\right\|\left(\sum_{i=1}^n\left|\lambda_i\right|^{p^*}\left|f'_{z_i}(z_i)\right|^{p^*}\right)^{\frac{1}{p^*}}\left(\sum_{i=1}^n\left\|x_i\right\|^p\right)^{\frac{1}{p}}\\
&\leq p_\B(g)\left\|x^*\right\|\sup_{g\in B_{\widehat{\B}(\D)}}\left(\sum_{i=1}^n\left|\lambda_i\right|^{p^*}\left|g'(z_i)\right|^{p^*}\right)^{\frac{1}{p^*}}\left(\sum_{i=1}^n\left\|x_i\right\|^p\right)^{\frac{1}{p}}.
\end{align*}
Taking infimum over all the representations of $\gamma$, we deduce that $\left|(g\otimes x^*)(\gamma)\right|\leq p_\B(g)\left\|x^*\right\|d^{\widehat{\mathcal{B}}}_p(\gamma)$. Hence $g\otimes x^*\in (\lin(\Gamma(\D))\otimes_{d^{\widehat{\mathcal{B}}_p}} X)^*$ with $\left\|g\otimes x^*\right\|\leq p_\B(g)\left\|x^*\right\|$. 
\end{proof}

The next result shows that $d^{\widehat{\B}}_p$ can be computed using a simpler formula in the cases $p=1$ and $p=\infty$. In fact, the $1$-Chevet--Saphar Bloch norm is justly the projective norm.

\begin{proposition}\label{1-nuclear-proj}
For $\gamma\in\lin(\Gamma(\D))\otimes X$, we have
$$
d^{\widehat{\B}}_1(\gamma)=\inf\left\{\sum_{i=1}^n\frac{\left|\lambda_i\right|}{1-\left|z_i\right|^2}\left\|x_i\right\|\right\}
$$ 
and 
$$
d^{\widehat{\B}}_\infty(\gamma)=\inf\left\{\sup_{g\in B_{\widehat{\B}(\D)}}\left(\sum_{i=1}^n\left|\lambda_i\right|\left|g'(z_i)\right|\left\|x_i\right\|\right)\right\},
$$
where the infimum is taken over all such representations of $\gamma$ as $\sum_{i=1}^n\lambda_i\gamma_{z_i}\otimes x_i$. 
\end{proposition}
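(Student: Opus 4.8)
The plan is to deduce both identities from the definitions of $d^{\widehat{\B}}_1$ and $d^{\widehat{\B}}_\infty$ by one rescaling device together with an elementary simplification of the supremum over $B_{\widehat{\B}(\D)}$.

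For $d^{\widehat{\B}}_1$ I would first record that $\sup_{g\in B_{\widehat{\B}(\D)}}|g'(z)|=1/(1-|z|^2)$ for every $z\in\D$: the bound $\le$ is immediate from $(1-|z|^2)|g'(z)|\le p_\B(g)\le 1$, and $\ge$ follows by testing with $f_z$, which lies in $B_{\widehat{\B}(\D)}$ and satisfies $f_z'(z)=1/(1-|z|^2)$. Consequently, for any representation $\gamma=\sum_{i=1}^n\lambda_i\gamma_{z_i}\otimes x_i$ one has
$$
\sup_{g\in B_{\widehat{\B}(\D)}}\left(\max_{1\le i\le n}|\lambda_i||g'(z_i)|\right)=\max_{1\le i\le n}\frac{|\lambda_i|}{1-|z_i|^2},
$$
since ``$\le$'' holds termwise and ``$\ge$'' is obtained by choosing $g=f_{z_{i_0}}$ for an index $i_0$ attaining the right-hand maximum. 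Thus $d^{\widehat{\B}}_1(\gamma)$ equals the infimum of $\big(\max_i|\lambda_i|/(1-|z_i|^2)\big)\big(\sum_i\|x_i\|\big)$ over all representations. One inequality of the claimed formula is then trivial, namely $\sum_i\tfrac{|\lambda_i|}{1-|z_i|^2}\|x_i\|\le\big(\max_i\tfrac{|\lambda_i|}{1-|z_i|^2}\big)\big(\sum_i\|x_i\|\big)$, so the right-hand infimum in the statement is $\le d^{\widehat{\B}}_1(\gamma)$. For the reverse inequality, given a representation I would discard the terms with $\lambda_i=0$ and then replace each remaining summand $\lambda_i\gamma_{z_i}\otimes x_i$ by $(t_i\lambda_i)\gamma_{z_i}\otimes(t_i^{-1}x_i)$ with $t_i=(1-|z_i|^2)/|\lambda_i|>0$; by bilinearity this is again a representation of $\gamma$, and for it $\max_i|t_i\lambda_i|/(1-|z_i|^2)=1$ while $\sum_i\|t_i^{-1}x_i\|=\sum_i\tfrac{|\lambda_i|}{1-|z_i|^2}\|x_i\|$. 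Hence $d^{\widehat{\B}}_1(\gamma)\le\sum_i\tfrac{|\lambda_i|}{1-|z_i|^2}\|x_i\|$ for every representation, and taking the infimum finishes the $p=1$ case (incidentally re-identifying $d^{\widehat{\B}}_1$ with the projective norm of Proposition~\ref{teo-L}).

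The argument for $d^{\widehat{\B}}_\infty$ follows the same pattern with the roles of the scalar and vector factors swapped. One direction is immediate from $\sum_i|\lambda_i||g'(z_i)|\|x_i\|\le\big(\max_i\|x_i\|\big)\sum_i|\lambda_i||g'(z_i)|$: taking the supremum over $g\in B_{\widehat{\B}(\D)}$ and then the infimum over representations shows that the right-hand infimum in the statement is $\le d^{\widehat{\B}}_\infty(\gamma)$. For the converse, when $\gamma\ne 0$ I would discard the terms with $x_i=0$ and rescale each remaining $\lambda_i\gamma_{z_i}\otimes x_i$ as $(t_i\lambda_i)\gamma_{z_i}\otimes(t_i^{-1}x_i)$ with $t_i=\|x_i\|>0$; then $\max_i\|t_i^{-1}x_i\|=1$ and $\sup_{g}\sum_i|t_i\lambda_i||g'(z_i)|=\sup_{g}\sum_i|\lambda_i||g'(z_i)|\|x_i\|$, so $d^{\widehat{\B}}_\infty(\gamma)$ is dominated by this quantity for each representation; the case $\gamma=0$ is trivial since both sides vanish. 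Taking the infimum gives the stated formula.

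I do not expect a genuine obstacle here; the only points needing care are bookkeeping: (i) the rescaling identity $\lambda\,\gamma_z\otimes x=(t\lambda)\,\gamma_z\otimes(t^{-1}x)$ and the fact that it produces a legitimate representation of the same molecule; (ii) the removal of degenerate summands (those with $\lambda_i=0$ in the $p=1$ case, those with $x_i=0$ in the $p=\infty$ case) and the separate trivial treatment of $\gamma=0$; and (iii) the interchange of $\sup_g$ with $\max_i$ in the $p=1$ computation, which is exactly where the test functions $f_z$ enter.
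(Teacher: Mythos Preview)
Your argument is correct. For $d^{\widehat{\B}}_\infty$ it coincides with the paper's proof: one inequality by $\sum_i|\lambda_i||g'(z_i)|\|x_i\|\le(\max_i\|x_i\|)\sum_i|\lambda_i||g'(z_i)|$, the other by normalizing the $x_i$'s. For $d^{\widehat{\B}}_1$ the paper obtains $\pi(\gamma)\le d^{\widehat{\B}}_1(\gamma)$ by the same chain you use (via $\sup_g|g'(z_i)|=1/(1-|z_i|^2)$), but for the reverse inequality it simply invokes the Bloch reasonable crossnorm property of $d^{\widehat{\B}}_1$ established in Theorem~\ref{teo-che-norms}, namely $d^{\widehat{\B}}_1(\gamma)\le\sum_i|\lambda_i|\,d^{\widehat{\B}}_1(\gamma_{z_i}\otimes x_i)\le\sum_i|\lambda_i|\|\gamma_{z_i}\|\|x_i\|$. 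Your rescaling with $t_i=(1-|z_i|^2)/|\lambda_i|$ achieves the same bound directly and is self-contained, so you do not need to appeal to the previous theorem; the paper's route is shorter only because that work has already been done.
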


\begin{proof}
Let $\gamma\in\lin(\Gamma(\D))\otimes X$ and let $\sum_{i=1}^n\lambda_i\gamma_{z_i}\otimes x_i$ be a representation of $\gamma$. We have 
\begin{align*}
\pi(\gamma)
&\leq\sum_{i=1}^n\frac{\left|\lambda_i\right|}{1-\left|z_i\right|^2}\left\|x_i\right\|=\sum_{i=1}^n\left|\lambda_i\right|\left(\sup_{g\in B_{\widehat{\B}(\D)}}\left|g'(z_i)\right|\right)\left\|x_i\right\|\\
&\leq\sum_{i=1}^n\max_{1\leq i\leq n}\left(\left|\lambda_i\right|\sup_{g\in B_{\widehat{\B}(\D)}}\left|g'(z_i)\right|\right)\left\|x_i\right\|=\left(\max_{1\leq i\leq n}\left(\left|\lambda_i\right|\sup_{g\in B_{\widehat{\B}(\D)}}\left|g'(z_i)\right|\right)\right)\sum_{i=1}^n\left\|x_i\right\|\\
&=\sup_{g\in B_{\widehat{\B}(\D)}}\left(\max_{1\leq i\leq n}\left(\left|\lambda_i\right|\left|g'(z_i)\right|\right)\right)\sum_{i=1}^n\left\|x_i\right\|\\
\end{align*}
and therefore $\pi(\gamma)\leq d^{\widehat{\B}}_1(\gamma)$. Conversely, since $d^{\widehat{\B}}_1$ is a Bloch reasonable crossnorm, we have 
$$
d^{\widehat{\B}}_1(\gamma)
\leq \sum_{i=1}^n\left|\lambda_i\right|d^{\widehat{\B}}_1(\gamma_{z_i}\otimes x_i)
=\sum_{i=1}^n\left|\lambda_i\right|\left\|\gamma_{z_i}\right\|\left\|x_i\right\|
=\sum_{i=1}^n\frac{\left|\lambda_i\right|}{1-|z_i|^2}\left\|x_i\right\|,
$$
and thus $d^{\widehat{\B}}_1(\gamma)\leq\pi(\gamma)$. 

On the other hand, we have 
$$
\sup_{g\in B_{\widehat{\B}(\D)}}\left(\sum_{i=1}^n\left|\lambda_i\right|\left|g'(z_i)\right|\left\|x_i\right\|\right)
\leq \left(\max_{1\leq i\leq n}\left\|x_i\right\|\right)\sup_{g\in B_{\widehat{\B}(\D)}}\left(\sum_{i=1}^n\left|\lambda_i\right|\left|g'(z_i)\right|\right),
$$
and taking the infimum over all representations of $\gamma$ gives  
$$
\inf\left\{\sup_{g\in B_{\widehat{\B}(\D)}}\left(\sum_{i=1}^n\left|\lambda_i\right|\left|g'(z_i)\right|\left\|x_i\right\|\right)\colon \gamma=\sum_{i=1}^n\lambda_i\gamma_{z_i}\otimes x_i\right\}\leq d^{\widehat{\B}}_\infty(\gamma).
$$
Conversely, we can assume without loss of generality that $x_i\neq 0$ for all $i\in\{1,\ldots,n\}$ and since $\gamma=\sum_{i=1}^n\lambda_i\left\|x_i\right\|\gamma_{z_i}\otimes (x_i/\left\|x_i\right\|)$, we obtain 
$$
d^{\widehat{\B}}_\infty(\gamma)\leq\sup_{g\in B_{\widehat{\B}(\D)}}\left(\sum_{i=1}^n\left|\lambda_i\right|\left\|x_i\right\|\left|g'(z_i)\right|\right),
$$
and taking the infimum over all representations of $\gamma$, we conclude that 
$$
d^{\widehat{\B}}_\infty(\gamma)\leq\inf\left\{\sup_{g\in B_{\widehat{\B}(\D)}}\left(\sum_{i=1}^n\left|\lambda_i\right|\left|g'(z_i)\right|\left\|x_i\right\|\right)\colon \gamma=\sum_{i=1}^n\lambda_i\gamma_{z_i}\otimes x_i\right\}.
$$
\end{proof}


\subsection{Duality}\label{4}

Given $p\in [1,\infty]$, we will show that the dual of the space $\G(\D)\widehat{\otimes}_{d^{\widehat{\B}}_{p^*}} X$ can be canonically identified as the space of $p$-summing Bloch mappings from $\D$ to $X^*$.

\begin{theorem}\label{messi-10}\label{theo-dual-classic-00} 
Let $1\leq p\leq\infty$. Then $\Pi^{\widehat{\B}}_p(\D,X^*)$ is isometrically isomorphic to $(\G(\D)\widehat{\otimes}_{d^{\widehat{\B}}_{p^*}} X)^*$, via the mapping $\Lambda\colon\Pi^{\widehat{\B}}_p(\D,X^*)\to(\G(\D)\widehat{\otimes}_{d^{\widehat{\B}}_{p^*}} X)^*$ defined by 
$$
\Lambda(f)(\gamma)=\sum_{i=1}^n\lambda_i\left\langle f'(z_i),x_i\right\rangle 
$$
for $f\in\Pi^{\widehat{\B}}_p(\D,X^*)$ and $\gamma=\sum_{i=1}^n\lambda_i\gamma_{z_i}\otimes x_i\in\lin(\Gamma(\D))\otimes X$. Furthermore, its inverse comes given by 
$$
\left\langle \Lambda^{-1}(\varphi)(z),x\right\rangle=\left\langle\varphi,\gamma_z\otimes x\right\rangle 
$$
for $\varphi\in(\G(\D)\widehat{\otimes}_{d^{\widehat{\B}}_{p^*}} X)^*$, $z\in\D$ and $x\in X$.

Moreover, on the unit ball of $\Pi^{\widehat{\B}}_p(\D,X^*)$ the weak* topology coincides with the topology of pointwise $\sigma(X^*,X)$-convergence.
\end{theorem}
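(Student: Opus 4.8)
The plan is to show that $\Lambda$ is a well-defined linear isometry onto $(\G(\D)\widehat{\otimes}_{d^{\widehat{\B}}_{p^*}}X)^*$ by constructing its inverse explicitly, following closely the scheme used in the proof of Proposition \ref{teo-L}. First I would check that $\Lambda(f)$ is bounded on $\lin(\Gamma(\D))\otimes X$ with $\|\Lambda(f)\|\leq\pi^{\B}_p(f)$: for a representation $\gamma=\sum_{i=1}^n\lambda_i\gamma_{z_i}\otimes x_i$, the estimate $|\Lambda(f)(\gamma)|\leq\sum_{i=1}^n|\lambda_i|\,\|f'(z_i)\|\,\|x_i\|$ combined with H\"older's Inequality (splitting into the exponents $p$ and $p^*$, with the usual $\max$/$\sum$ conventions when $p\in\{1,\infty\}$) and the defining inequality of $\Pi^{\widehat{\B}}_p$ gives $|\Lambda(f)(\gamma)|\leq\pi^{\B}_p(f)\,d^{\widehat{\B}}_{p^*}(\gamma)$ after taking the infimum over all representations of $\gamma$; hence $\Lambda(f)$ extends uniquely to an element of $(\G(\D)\widehat{\otimes}_{d^{\widehat{\B}}_{p^*}}X)^*$ of norm at most $\pi^{\B}_p(f)$. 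Injectivity of $\Lambda$ is immediate, since $\Lambda(f)=0$ forces $\langle f'(z),x\rangle=\Lambda(f)(\gamma_z\otimes x)=0$ for all $z\in\D$ and $x\in X$, so $f'\equiv0$ and $f=0$ because $f(0)=0$.

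For surjectivity, given $\varphi\in(\G(\D)\widehat{\otimes}_{d^{\widehat{\B}}_{p^*}}X)^*$ I would set $\langle F_\varphi(z),x\rangle=\langle\varphi,\gamma_z\otimes x\rangle$ and prove that $F_\varphi\colon\D\to X^*$ is holomorphic by exactly the argument given for $F_\eta$ in the proof of Proposition \ref{teo-L} (using that $\Gamma\colon\D\to\lin(\Gamma(\D))$ is holomorphic together with the linearity and continuity of $\varphi$); then \cite[Lemma 2.9]{JimRui-22} yields $f\in\H(\D,X^*)$ with $f(0)=0$ and $f'=F_\varphi$, and this $f$ is $\Lambda^{-1}(\varphi)$. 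The crucial point is the norm estimate $\pi^{\B}_p(f)\leq\|\varphi\|$. For $1<p<\infty$ I would proceed as follows: fix $n$, scalars $\lambda_i$ and points $z_i\in\D$, write $a_i=\|f'(z_i)\|$ and $M=\sup_{g\in B_{\widehat{\B}(\D)}}(\sum_i|\lambda_i|^p|g'(z_i)|^p)^{1/p}$; by the scalar $\ell_p$--$\ell_{p^*}$ duality it suffices to bound $\sum_is_i|\lambda_i|a_i\leq\|\varphi\|\,M$ for every $s_i\geq0$ with $\sum_is_i^{p^*}\leq1$. Choosing $x_i\in B_X$ (after adjusting by unimodular factors) so that $\lambda_i\langle f'(z_i),x_i\rangle$ is real and $\geq(1-\varepsilon)|\lambda_i|a_i$, and testing $\varphi$ against the molecule $\gamma=\sum_i\lambda_i\gamma_{z_i}\otimes(s_ix_i)$ — for which $d^{\widehat{\B}}_{p^*}(\gamma)\leq M(\sum_is_i^{p^*})^{1/p^*}\leq M$ precisely because the weights $s_i$ sit in the $X$-slot — one gets $(1-\varepsilon)\sum_is_i|\lambda_i|a_i\leq\langle\varphi,\gamma\rangle\leq\|\varphi\|\,M$, and letting $\varepsilon\to0$ concludes. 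The case $p=1$ is handled with $\gamma=\sum_i\lambda_i\gamma_{z_i}\otimes x_i$, for which $d^{\widehat{\B}}_\infty(\gamma)\leq\sup_g\sum_i|\lambda_i|\,|g'(z_i)|$, and the case $p=\infty$ only requires $(1-|z|^2)\|f'(z)\|\leq\|\varphi\|$, which follows from $d^{\widehat{\B}}_1(\gamma_z\otimes x)\leq\|x\|/(1-|z|^2)$. Finally $\Lambda(\Lambda^{-1}(\varphi))=\varphi$ because both functionals agree on the dense subspace $\lin(\Gamma(\D))\otimes X$, and $\Lambda^{-1}(\Lambda(f))=f$ because the two mappings have the same derivative and vanish at $0$; together with the inequalities $\|\Lambda(f)\|\leq\pi^{\B}_p(f)\leq\|\Lambda(f)\|$ this gives the isometric isomorphism.

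For the last assertion, identify $\Pi^{\widehat{\B}}_p(\D,X^*)$ with $(\G(\D)\widehat{\otimes}_{d^{\widehat{\B}}_{p^*}}X)^*$ and let $B$ be its closed unit ball. Since $B$ is norm-bounded and $\lin(\Gamma(\D))\otimes X$ is norm-dense in the predual, a net $(f_\alpha)$ in $B$ converges weak* to $f$ if and only if $\Lambda(f_\alpha)(\gamma_z\otimes x)\to\Lambda(f)(\gamma_z\otimes x)$, that is, $f_\alpha'(z)\to f'(z)$ in $\sigma(X^*,X)$, for every $z\in\D$ and $x\in X$. It remains to pass between $f_\alpha'$ and $f_\alpha$ on $B$: on one hand $f_\alpha(z)=z\int_0^1f_\alpha'(tz)\,dt$ together with the uniform bound $\|f_\alpha'(w)\|\leq1/(1-|w|^2)$ and dominated convergence gives $\langle f_\alpha(z),x\rangle\to\langle f(z),x\rangle$; on the other hand the Cauchy integral formula $f_\alpha'(z)=\tfrac{1}{2\pi i}\int_{|w-z|=r}f_\alpha(w)(w-z)^{-2}\,dw$, with the $f_\alpha$ uniformly bounded on compacta (again from the Bloch bound), recovers $\langle f_\alpha'(z),x\rangle\to\langle f'(z),x\rangle$ from pointwise $\sigma(X^*,X)$-convergence of the $f_\alpha$. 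Hence on $B$ the weak* topology and the topology of pointwise $\sigma(X^*,X)$-convergence produce the same convergent nets; since $(B,w^*)$ is compact and the second topology is Hausdorff, the identity map is a homeomorphism between them.

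I expect the main obstacle to be obtaining the sharp estimate $\pi^{\B}_p(f)\leq\|\varphi\|$ for the inverse (rather than merely $p_\B(f)\leq\|\varphi\|$): this hinges on selecting the right test molecule — distributing the $\ell_{p^*}$-weights into the $X$-component so that the $d^{\widehat{\B}}_{p^*}$-norm stays controlled by $M$ — and on carefully tracking the phases of the complex scalars $\lambda_i$ when choosing the norming vectors $x_i$; the ranges $p=1$, $1<p<\infty$ and $p=\infty$ must also be treated with slightly different molecules.
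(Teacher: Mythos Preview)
Your proof is correct and follows essentially the same approach as the paper: the same H\"older estimate for $\|\Lambda(f)\|\le\pi_p^{\B}(f)$, the same construction of $f_\varphi$ from $\varphi$ via the holomorphy of $\Gamma$ and \cite[Lemma~2.9]{JimRui-22}, the same duality trick of placing the $\ell_{p^*}$-weights in the $X$-slot of a test molecule to obtain $\pi_p^{\B}(f_\varphi)\le\|\varphi\|$, and the same compact--Hausdorff argument for the topological statement. The only cosmetic differences are that the paper packages the duality step as computing the norm of an auxiliary linear map $T\colon(\C^n,\|\cdot\|_{p^*})\to\C$ (with norming vectors $\|x_i\|\le 1+\varepsilon$ attaining $\|f_\varphi'(z_i)\|$ exactly), and that the paper only argues the implication weak*\,$\Rightarrow$\,pointwise before invoking compactness, so your additional Cauchy-formula step for the reverse implication is superfluous but harmless.
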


\begin{proof}
We prove it for $1<p<\infty$. The cases $p=1$ and $p=\infty$ follow similarly.

Let $f\in\Pi^{\widehat{\B}}_p(\D,X^*)$ and let $\Lambda_0(f)\colon\lin(\Gamma(\D))\otimes X\to\mathbb{C}$ be its associate linear functional given by 
$$
\Lambda_0(f)(\gamma)=\sum_{i=1}^n\lambda_i\left\langle f'(z_i),x_i\right\rangle 
$$
for $\gamma=\sum_{i=1}^n\lambda_i\gamma_{z_i}\otimes x_i\in\lin(\Gamma(\D))\otimes X$. Note that $\Lambda_0(f)\in(\lin(\Gamma(\D))\otimes_{d^{\widehat{\B}}_{p^*}} X)^*$ with 
$\left\|\Lambda_0(f)\right\|\leq \pi^{\B}_p(f)$ since  
\begin{align*}
\left|\Lambda_0(f)(\gamma)\right|&=\left|\sum_{i=1}^n\lambda_i\left\langle f'(z_i), x_i\right\rangle\right|\leq\sum_{i=1}^n\left|\lambda_i\right|\left\|f'(z_i)\right\|\left\|x_i\right\|\\
&\leq \left(\sum_{i=1}^n\left|\lambda_i\right|^p\left\|f'(z_i)\right\|^p\right)^{\frac{1}{p}}\left(\sum_{i=1}^n\left\|x_i\right\|^{p^*}\right)^{\frac{1}{p^*}}\\
&\leq \pi^{\B}_p(f)\sup_{g\in B_{\widehat{\B}(\D)}}\left(\sum_{i=1}^n\left|\lambda_i\right|^p\left|g'(z_i)\right|^p\right)^{\frac{1}{p}}\left(\sum_{i=1}^n\left\|x_i\right\|^{p^*}\right)^{\frac{1}{p^*}},
\end{align*}
and taking infimum over all the representations of $\gamma$, we deduce that $\left|\Lambda_0(f)(\gamma)\right|\leq \pi^{\B}_p(f)d^{\widehat{\B}}_{p^*}(\gamma)$. Since $\gamma$ was arbitrary, then $\Lambda_0(f)$ is continuous on $\lin(\Gamma(\D))\otimes_{d^{\widehat{\B}}_{p^*}} X$ with $\left\|\Lambda_0(f)\right\|\leq \pi^{\B}_p(f)$.

Since $\lin(\Gamma(\D))$ is a norm-dense linear subspace of $\G(\D)$ and $d^{\widehat{\B}}_{p^*}$ is a norm on $\G(\D)\otimes X$, then $\G(\D)\otimes X$ is a dense linear subspace of $\G(\D)\otimes_{d^{\widehat{\B}}_{p^*}} X$ and therefore also of its completion $\G(\D)\widehat{\otimes}_{d^{\widehat{\B}}_{p^*}} X$. Hence there is a unique continuous mapping $\Lambda(f)$ from $\G(\D)\widehat{\otimes}_{d^{\widehat{\B}}_{p^*}} X$ to $\mathbb{C}$ that extends $\Lambda_0(f)$. Further, $\Lambda(f)$ is linear and $\left\|\Lambda(f)\right\|=\left\|\Lambda_0(f)\right\|$.

Let $\Lambda\colon\Pi^{\widehat{\B}}_p(\D,X^*)\to(\G(\D)\widehat{\otimes}_{d^{\widehat{\B}}_{p^*}} X)^*$ be the map so defined. Since $\Lambda_0$ is a linear monomorphism from $\Pi^{\widehat{\B}}_p(\D,X^*)$ to $(\G(\D)\otimes X)^*$ by Corollary \ref{linearization}, it follows easily that $\Lambda$ is so. To prove that $\Lambda$ is a surjective isometry, let $\varphi\in(\G(\D)\widehat{\otimes}_{d^{\widehat{\B}}_{p^*}} X)^*$ and define $F_\varphi\colon\D\to X^*$ by 
$$
\left\langle F_\varphi(z),x\right\rangle=\varphi(\gamma_z\otimes x)\qquad\left(z\in \D,\; x\in X\right). 
$$
As in the proof of Proposition \ref{teo-L}, it is similarly proved that $F_\varphi\in\H(\D,X^*)$ and there exists a mapping $f_\varphi\in\widehat{\B}(\D,X^*)$ with $p_{\B}(f_\varphi)\leq\left\|\varphi\right\|$ such that $f_\varphi'=F_\varphi$.

We now prove that $f_\varphi\in\Pi^{\widehat{\B}}_p(\D,X^*)$. Fix $n\in\mathbb{N}$, $\lambda_1,\ldots,\lambda_n\in\mathbb{C}$ and $z_1,\ldots,z_n\in\D$. Let $\varepsilon>0$. For each $i\in\{1,\ldots,n\}$, there exists $x_i\in X$ with $\left\|x_i\right\|\leq 1+\varepsilon$ such that $\left\langle f_\varphi'(z_i),x_i\right\rangle=\left\|f_\varphi'(z_i)\right\|$. It is clear that the map $T\colon\C^n\to\C$, defined by 
$$
T(t_1,\ldots,t_n)=\sum_{i=1}^n t_i \lambda_i\left\|f_\varphi'(z_i)\right\|,\quad\forall (t_1,\ldots,t_n)\in\C^n,
$$
is linear and continuous on $(\C^n,||\cdot||_{p^*})$ with $\left\|T\right\|=\left(\sum_{i=1}^n\left|\lambda_i\right|^p\left\|f'_\varphi(z_i)\right\|^{p}\right)^{\frac{1}{p}}$. For any $(t_1,\ldots,t_n)\in\C^n$ with $||(t_1,\ldots,t_n)||_{p^*}\leq 1$, we have
\begin{align*}
\left|T(t_1,\ldots,t_n)\right|&=\left|\varphi\left(\sum_{i=1}^n t_i\lambda_i\gamma_{z_i}\otimes x_i\right)\right|
\leq \left\|\varphi\right\|d^{\widehat{\B}}_{p^*}\left(\sum_{i=1}^n\lambda_i\gamma_{z_i}\otimes t_ix_i\right)\\
&\leq\left\|\varphi\right\|\left(\sup_{g\in B_{\widehat{\B}(\D)}}\left(\sum_{i=1}^n\left|\lambda_i\right|^{p}\left|g'(z_i)\right|^{p}\right)^{\frac{1}{p}}\right)\left(\sum_{i=1}^n\left\|t_ix_i\right\|^{p^*}\right)^{\frac{1}{p^*}}\\
&\leq(1+\varepsilon)\left\|\varphi\right\|\sup_{g\in B_{\widehat{\B}(\D)}}\left(\sum_{i=1}^n\left|\lambda_i\right|^{p}\left|g'(z_i)\right|^{p}\right)^{\frac{1}{p}},
\end{align*}
therefore 
$$
\left(\sum_{i=1}^n\left|\lambda_i\right|^p\left\|f'_\varphi(z_i)\right\|^{p}\right)^{\frac{1}{p}}
\leq(1+\varepsilon)\left\|\varphi\right\|\sup_{g\in B_{\widehat{\B}(\D)}}\left(\sum_{i=1}^n\left|\lambda_i\right|^{p}\left|g'(z_i)\right|^{p}\right)^{\frac{1}{p}},
$$
and since $\varepsilon$ was arbitrary, we have  
$$
\left(\sum_{i=1}^n\left|\lambda_i\right|^p\left\|f'_\varphi(z_i)\right\|^{p}\right)^{\frac{1}{p}}
\leq\left\|\varphi\right\|\sup_{g\in B_{\widehat{\B}(\D)}}\left(\sum_{i=1}^n\left|\lambda_i\right|^{p}\left|g'(z_i)\right|^{p}\right)^{\frac{1}{p}},
$$
and we conclude that $f_\varphi\in\Pi^{\widehat{\B}}_p(\D,X^*)$ with $\pi^{\B}_p(f_\varphi)\leq\left\|\varphi\right\|$. 

Finally, for any $\gamma=\sum_{i=1}^n \lambda_i\gamma_{z_i}\otimes x_i\in\lin(\Gamma(\D))\otimes X$, we get 
$$
\Lambda(f_\varphi)(\gamma) =\sum_{i=1}^n\lambda_i\left\langle f'_\varphi(z_i),x_i\right\rangle =\sum_{i=1}^n\lambda_i\varphi(\gamma_{z_i}\otimes x_i) =\varphi\left(\sum_{i=1}^n\lambda_i\gamma_{z_i}\otimes x_i\right) =\varphi(\gamma). 
$$
Hence $\Lambda(f_\varphi)=\varphi$ on a dense subspace of $\G(\D)\widehat{\otimes}_{d^{\widehat{\B}}_{p^*}} X$ and, consequently, $\Lambda(f_\varphi)=\varphi$, which shows the last statement of the theorem. Moreover, $\pi^{\B}_p(f_\varphi)\leq\left\|\varphi\right\|=\left\|\Lambda(f_\varphi)\right\|$.

For the final assertion of the statement, let $(f_i)_{i\in I}$ be a net in $\Pi^{\widehat{\B}}_p(\D,X^*)$ and $f\in\Pi^{\widehat{\B}}_p(\D,X^*)$. Assume $(f_i)_{i\in I}\to f$ weak* in $\Pi^{\widehat{\B}}_p(\D,X^*)$, this means that $(\Lambda(f_i))_{i\in I}\to \Lambda(f)$ weak* in $(\G(\D)\widehat{\otimes}_{d^{\widehat{\B}}_{p^*}} X)^*$, that is, $(\Lambda(f_i)(\gamma))_{i\in I}\to \Lambda(f)(\gamma)$ for all $\gamma\in\G(\D)\widehat{\otimes}_{d^{\widehat{\B}}_{p^*}} X$. In particular, 
$$
(<f'_i(z),x>)_{i\in I}=(\Lambda(f_i)(\gamma_z\otimes x))_{i\in I}\to \Lambda(f)(\gamma_z\otimes x)=<f'(z),x>
$$
for every $z\in\D$ and $x\in X$. Given $z\in\D$ and $x\in X$, we have
\begin{align*}
\left|\left\langle f_i(z)-f(z),x\right\rangle\right|&=\left|\int_{[0,z]}\left\langle f'_i(w)-f'(w),x\right\rangle\ dw\right|\\
                        &\leq |z|\max\left\{\left|\left\langle f'_i(w)-f'(w),x\right\rangle\right|\colon w\in [0,z]\right\}\\
												&=|z|\left|\left\langle f'_i(w_z)-f'(w_z),x\right\rangle\right|
\end{align*}
for all $i\in I$ and some $w_z\in [0,z]$, and thus $(\left\langle f_i(z),x\right\rangle)_{i\in I}\to \left\langle f(z),x\right\rangle$. This tells us that $(f_i)_{i\in I}$ converges to $f$ in the topology of pointwise $\sigma(X^*,X)$-convergence. Hence the identity on $\Pi^{\widehat{\B}}_p(\D,X^*)$ is a continuous bijection from the weak* topology to the topology of pointwise $\sigma(X^*,X)$-convergence. On the unit ball, the first topology is compact and the second one is Hausdorff, and so they must coincide.
\end{proof}

In particular, in view of Theorem \ref{messi-10} and taking into account Propositions \ref{prop-1}, \ref{teo-L} and \ref{1-nuclear-proj}, we can identify the space $\widehat{\B}(\D,X^*)$ with the dual space of $\G(\D)\widehat{\otimes} X\subseteq\widehat{\B}(\D,X^*)^*$.

\begin{corollary}\label{theo-dual-classic} 
$\widehat{\B}(\D,X^*)$ is isometrically isomorphic to $(\G(\D)\widehat{\otimes} X)^*$, via the mapping $\Lambda\colon\widehat{\B}(\D,X^*)\to(\G(\D)\widehat{\otimes} X)^*$ given by 
$$
\Lambda(f)(\gamma)=\sum_{i=1}^n\lambda_i\left\langle f'(z_i),x_i\right\rangle 
$$
for $f\in\widehat{\B}(\D,X^*)$ and $\gamma=\sum_{i=1}^n\lambda_i\gamma_{z_i}\otimes x_i\in\G(\D)\otimes X$. Furthermore, its inverse 
is given by 
$$
\left\langle \Lambda^{-1}(\varphi)(z),x\right\rangle=\left\langle\varphi,\gamma_z\otimes x\right\rangle 
$$
for $\varphi\in(\G(\D)\widehat{\otimes} X)^*$, $z\in\D$ and $x\in X$. $\hfill\qed$
\end{corollary}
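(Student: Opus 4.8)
The plan is to obtain this corollary as the case $p=\infty$ of Theorem \ref{messi-10}. Recall that the conjugate index of $p=\infty$ is $p^*=1$, so the tensor norm occurring in Theorem \ref{messi-10} is then $d^{\widehat{\B}}_1$. First I would apply Proposition \ref{prop-1}, which yields the isometric identity $\Pi^{\widehat{\B}}_\infty(\D,X^*)=\widehat{\B}(\D,X^*)$ (with $\pi^{\B}_\infty(f)=p_\B(f)$ for every $f$); thus the domain of the map $\Lambda$ in Theorem \ref{messi-10} is precisely the normalized Bloch space $\widehat{\B}(\D,X^*)$ endowed with its Bloch norm, and the formula defining $\Lambda$ is unchanged.

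Next I would reconcile the tensor products. By Proposition \ref{teo-L}, the operator-canonical norm on $\lin(\Gamma(\D))\otimes X$ (regarded inside $\widehat{\B}(\D,X^*)^*$) agrees with the norm $\pi$ given there by the infimum of $\sum_{i=1}^n |\lambda_i|\|x_i\|/(1-|z_i|^2)$ over all representations $\gamma=\sum_{i=1}^n\lambda_i\gamma_{z_i}\otimes x_i$, which is exactly the projective tensor norm on $\G(\D)\otimes X$ restricted to $\lin(\Gamma(\D))\otimes X$ since $\|\gamma_{z_i}\|=1/(1-|z_i|^2)$. On the other hand, Proposition \ref{1-nuclear-proj} states that $d^{\widehat{\B}}_1$ is computed by this same infimum. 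Hence $d^{\widehat{\B}}_1=\pi$ on $\lin(\Gamma(\D))\otimes X$, and since $\lin(\Gamma(\D))$ is norm-dense in $\G(\D)$, the algebraic tensor product $\G(\D)\otimes X$ is dense in $\G(\D)\otimes_{d^{\widehat{\B}}_1}X$ and its completion $\G(\D)\widehat{\otimes}_{d^{\widehat{\B}}_1}X$ coincides with the projective tensor product $\G(\D)\widehat{\otimes}X$. Therefore the codomain of $\Lambda$ in the case $p=\infty$ is $(\G(\D)\widehat{\otimes}X)^*$.

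Finally, I would simply transcribe the conclusions of Theorem \ref{messi-10}: the expression $\Lambda(f)(\gamma)=\sum_{i=1}^n\lambda_i\langle f'(z_i),x_i\rangle$ and the inverse $\langle\Lambda^{-1}(\varphi)(z),x\rangle=\langle\varphi,\gamma_z\otimes x\rangle$ are literally the $p=\infty$ instances, and the assertion that $\Lambda$ is a surjective linear isometry is inherited verbatim. I do not foresee any genuine obstacle here, as the corollary is a pure specialization; the only point deserving a line of justification is that the equality $d^{\widehat{\B}}_1=\pi$ on the algebraic tensor product forces the two completions to be the same Banach space, which is immediate from the density of $\lin(\Gamma(\D))$ in $\G(\D)$.
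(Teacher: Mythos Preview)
Your proposal is correct and mirrors the paper's own justification: the corollary is stated with a \qed\ and is preceded by the remark that it follows from Theorem \ref{messi-10} together with Propositions \ref{prop-1}, \ref{teo-L} and \ref{1-nuclear-proj}, exactly the ingredients you invoke. The specialization to $p=\infty$, the identification $\Pi^{\widehat{\B}}_\infty(\D,X^*)=\widehat{\B}(\D,X^*)$, and the equality $d^{\widehat{\B}}_1=\pi$ yielding $\G(\D)\widehat{\otimes}_{d^{\widehat{\B}}_1}X=\G(\D)\widehat{\otimes}X$ are precisely the paper's intended route.
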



We conclude this paper with some open questions we hope researchers will take up. 
In Theorem \ref{Pietsch2}, note that if $f\in\Pi^{\widehat{\B}}_2(\D,X)$, then 
$$
\iota_X\circ f'=T\circ I_{\infty,2}\circ h'\colon \D \stackrel{h'}{\rightarrow}L_\infty(\mu)\stackrel{I_{\infty,2}}{\rightarrow}L_2(\mu)\stackrel{T}{\rightarrow}\ell_{\infty}(B_{X^*}).
$$
Hence $\iota_X\circ f'$ factors in this way through the Hilbert space $L_2(\mu)$. It would be interesting to introduce and study the class of Bloch mappings whose derivatives factor through a Hilbert space.

Motivated by the seminal paper of Farmer and Johnson \cite{FarJoh-09} that raised a similar question in the setting of Lipschitz $p$-summing mappings, what results about $p$-summing linear operators have analogues for $p$-summing Bloch mappings?




 
 
 


\end{document}